\numberwithin{equation}{section}
\newcommand{\bQ}{\mathbb{Q}}
\newcommand{\bZ}{\mathbb{Z}}
\newcommand{\bE}{\mathbb{E}}
\newcommand{\mf}[1]{\mathfrak{#1}}
\newcommand{\mcal}[1]{\mathcal{#1}}
\def\({ \left( }
\def\){ \right)}
\DeclareMathOperator{\sgn}{sgn}
\theoremstyle{plain}
\newtheorem{thm}{Theorem}[section]
\newtheorem{prop}[thm]{Proposition}
\newtheorem{lemma}[thm]{Lemma}
\newtheorem{corollary}[thm]{Corollary}
\newtheorem{definition}[thm]{Definition}
\newtheorem{example}[thm]{Example}
\newtheorem{remark}[thm]{Remark}
\theoremstyle{conjecture}
\newtheorem{conj}[thm]{Conjecture}
\theoremstyle{problem}
\theoremstyle{formula}
\title{ {\bf Polynomiality of shifted Plancherel averages
and content evaluations }
}
\author{\textsc{Sho Matsumoto}%
\footnote{The author is supported by JSPS KAKENHI 
Grant Number 25800062, 2434003.}
}
\date{\empty}
\begin{document}
%>>>>>>>>>>>>>>>>>>>>>>>>>>>>>>>>>>>>>>>>>

\maketitle

\begin{abstract}
The shifted Plancherel measure
is a natural probability measure on strict partitions. 
We prove a polynomiality  property for 
the averages of the shifted Plancherel measure. 
As an application, 
we give alternative proofs of 
some content evaluation formulas, 
obtained by Han and Xiong very recently.
Our main tool is factorial Schur $Q$-functions.
\end{abstract}

\noindent
{\bf Mathematics Subject Classification (2010)}
05E05, 05A19, 60C05. 

\noindent
{\bf Keywords} strict partition; Plancherel measure; Schur's $Q$-function;
content.

\section{Introduction}

\subsection{Partitions}

Following to Macdonald's book \cite{Macdonald},
let us recall the basic knowledge on strict partitions.
A {\it partition} is a finite weakly-decreasing sequence
$\lambda=(\lambda_1,\lambda_2,\dots,\lambda_l)$
of positive integers.
The integer $\ell(\lambda)=l$ is the {\it length} of $\lambda$
and  $|\lambda|=\sum_{i=1}^l \lambda_i$ is  
the {\it size} of $\lambda$.
If $|\lambda|=n$, we say that $\lambda$ is a partition of $n$.

A partition $\lambda$ is said to be {\it strict} if all $\lambda_i$ are pairwise distinct,
and $\lambda$ is said to be {\it odd} if all $\lambda_i$ are odd integers.
Let $\mcal{SP}_n$ be the set of all strict partitions of $n$ and
$\mcal{OP}_n$ the set of all odd partitions of $n$.
The fact that their cardinalities coincide   is well known:
$|\mcal{SP}_n|=|\mcal{OP}_n|$.
Set
$\mcal{SP}=\bigcup_{n=0}^\infty \mcal{SP}_n$ and 
$\mcal{OP}=\bigcup_{n=0}^\infty \mcal{OP}_n$. 
For convenience, we deal with
the empty partition $\emptyset$ , which is 
the unique partition in $\mcal{SP}_0=\mcal{OP}_0$
with length $0$.

For each $\lambda \in \mcal{SP}$, 
we consider the set  
$$
S(\lambda)=\{ (i, j) \in \bZ^2 \ | \ 
1 \le i \le \ell(\lambda), \ i \le j \le 
\lambda_i+i-1\}.
$$
The set $S(\lambda)$ is usually drawn in a graphical way,
and called the {\it shifted Young diagram} of $\lambda$. 
Each element $\square =(i,j) \in S(\lambda)$ is often called 
a {\it box} of $\lambda$.

Let $\lambda, \mu$ be strict partitions such that $S(\lambda) \supset S(\mu)$.
Put $k=|\lambda| - |\mu|$.
A {\it standard tableau} of shape $S(\lambda/\mu)$ is a sequence of strict partitions
$(\lambda^{(0)}, \lambda^{(1)},\dots, \lambda^{(k)})$ such that
$\lambda^{(0)}=\mu$; $\lambda^{(k)}=\lambda$; and that
for each $i=1,2,\dots,k$, the diagram $S(\lambda^{(i)})$ is
obtained from $S(\lambda^{(i-1)})$ by adding exactly one box.
We denote by $g^{\lambda/\mu}$  the number of standard tableaux of
shape $S(\lambda/\mu)$.
We set $g^{\lambda/\mu}=0$ unless $S(\lambda) \supset S(\mu)$.
Define $g^\lambda= g^{\lambda/\emptyset}$.

\subsection{Shifted Plancherel measure} \label{subsec:SPM}

In this paper, we consider the following probability measure on 
$\mcal{SP}_n$, studied in many papers, e.g., in
\cite{Borodin1999, Ivanov1999, M2005a}.

\begin{definition}
The {\it shifted Plancherel measure} $\mathbb{P}_n$ on $\mcal{SP}_n$
is defined by
$$
\mathbb{P}_n (\lambda)= \frac{2^{n-\ell(\lambda)} 
(g^\lambda)^2}{n!}.
$$
\end{definition}

This indeed defines
a {\it probability}  since the identity \cite[Corollary 10.8]{HH}
$$
\sum_{\lambda \in \mcal{SP}_n} 2^{n-\ell(\lambda)} (g^\lambda)^2=n!
$$
holds. For example, if $n=5$ then
$\mathbb{P}_5 ((5))= \frac{16}{5!}$, 
$\mathbb{P}_5 ((4,1))= \frac{72}{5!}$, and
$\mathbb{P}_5 ((3,2))= \frac{32}{5!}$. 

For a function $\varphi$ on $\mcal{SP}$, we call
$$
\bE_n [\varphi] = \sum_{\lambda \in \mcal{SP}_n} 
\mathbb{P}_n(\lambda) \varphi(\lambda)=  \sum_{\lambda \in \mcal{SP}_n}
\frac{2^{n-\ell(\lambda)} (g^\lambda)^2}{n!} \varphi(\lambda)
$$
the {\it shifted Plancherel average} of $\varphi$.

Let $\{x_1,x_2,\dots\}$ be formal variables, then 
for each positive integer $r$, the $r$-th power-sum symmetric function is
defined by
$$
p_r(x_1,x_2,\dots)= x_1^r+ x_2^r+ \cdots.
$$
It is well known that 
the algebra of symmetric functions is generated by the family $\{p_r\}$.
We denote by $\Gamma$ the subalgebra generated by 
%the $p_{2m+1}$, where $m=0,1,2,\dots$.
$\{p_{2m+1}\}_{m=0,1,2,\dots}$.
Elements in $\Gamma$ are sometimes called {\it supersymmetric functions},
adapted to strict partitions.
The following theorem claims a polynomiality of $\bE_n[f]$
for any supersymmetric function $f$.

\begin{thm} \label{thm:ShPlPoly}
Suppose that $f$ is a supersymmetric function. Then
$$
\bE_n [f] = \sum_{\lambda \in \mcal{SP}_n}
\frac{2^{n-\ell(\lambda)} (g^\lambda)^2}{n!} f(\lambda_1,\lambda_2,\dots,\lambda_{\ell(\lambda)})
$$ 
is a polynomial function in $n$.
\end{thm}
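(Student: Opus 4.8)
The plan is to pass first to the basis of Schur $Q$-functions and then to their interpolation (factorial) analogues, for which the shifted Plancherel average is visibly polynomial in $n$. For any function $\varphi$ on $\mcal{SP}$ we continue to write $\bE_n[\varphi]=\sum_{\lambda\in\mcal{SP}_n}\mathbb{P}_n(\lambda)\varphi(\lambda)$, which is defined whether or not $\varphi$ arises from a symmetric function. Since $\bE_n[\,\cdot\,]$ is linear and the Schur $Q$-functions $\{Q_\mu\}_{\mu\in\mcal{SP}}$ are a linear basis of $\Gamma$, it suffices to show that $\bE_n[Q_\mu]$ is a polynomial in $n$ for each fixed strict partition $\mu$ (with $Q_\mu$ evaluated at the parts $(\lambda_1,\dots,\lambda_{\ell(\lambda)})$, as in the theorem).

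Next I would bring in the factorial Schur $Q$-function $Q^*_\mu$, which after the appropriate shift and normalisation I regard as the function on $\mcal{SP}$ that vanishes unless $S(\lambda)\supseteq S(\mu)$ and satisfies the $Q$-analogue of the Okounkov--Olshanski binomial formula
$$
Q^*_\mu(\lambda)=(|\lambda|)_{|\mu|}\,\frac{g^{\lambda/\mu}}{g^\lambda},\qquad (n)_k:=n(n-1)\cdots(n-k+1).
$$
Two facts about these functions are needed. First, the binomial formula itself. Second, the structural statement that $\{Q^*_\nu\}_{\nu\in\mcal{SP}}$ spans the algebra of shifted supersymmetric functions on $\mcal{SP}$, a filtered algebra whose associated graded ring is $\Gamma$; as a consequence the function $\lambda\mapsto Q_\mu(\lambda_1,\dots,\lambda_{\ell(\lambda)})$ lies in this algebra, so there are scalars $a_{\mu\nu}$, \emph{independent of $n$}, with
$$
Q_\mu(\lambda_1,\dots,\lambda_{\ell(\lambda)})=\sum_{|\nu|\le|\mu|}a_{\mu\nu}\,Q^*_\nu(\lambda)\qquad(\lambda\in\mcal{SP}).
$$

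The average of $Q^*_\mu$ can then be evaluated directly. Writing $k=|\mu|$ and inserting the binomial formula,
$$
\bE_n[Q^*_\mu]=\frac{(n)_k}{n!}\sum_{\lambda\in\mcal{SP}_n}2^{n-\ell(\lambda)}\,g^\lambda\,g^{\lambda/\mu}.
$$
The remaining sum is computed inside $\Gamma$ with its standard inner product, for which $\langle P_\sigma,Q_\tau\rangle=\delta_{\sigma\tau}$, $\langle P_\sigma,P_\sigma\rangle=2^{-\ell(\sigma)}$, and $\langle p_1^{\,j},p_1^{\,j}\rangle=j!\,2^{-j}$. The iterated Pieri rule $p_1^{\,m}P_\sigma=\sum_\tau g^{\tau/\sigma}P_\tau$ gives $2^{n-\ell(\lambda)}g^\lambda=2^n\langle p_1^{\,n},P_\lambda\rangle$ and $g^{\lambda/\mu}=\langle p_1^{\,n-k}P_\mu,Q_\lambda\rangle$; summing over $\lambda$ by duality of the bases $\{P_\lambda\},\{Q_\lambda\}$ and then using the coproduct $\Delta p_1^{\,n}=\sum_j\binom nj p_1^{\,j}\otimes p_1^{\,n-j}$ together with $\langle p_1^{\,k},P_\mu\rangle=2^{-\ell(\mu)}g^\mu$ yields
$$
\sum_{\lambda\in\mcal{SP}_n}2^{n-\ell(\lambda)}g^\lambda g^{\lambda/\mu}=2^n\,\langle p_1^{\,n},\,p_1^{\,n-k}P_\mu\rangle=\frac{n!}{k!}\,2^{\,k-\ell(\mu)}\,g^\mu.
$$
Hence $\bE_n[Q^*_\mu]=\binom{n}{|\mu|}\,2^{\,|\mu|-\ell(\mu)}\,g^\mu$, a polynomial in $n$ of degree $|\mu|$; and then $\bE_n[Q_\mu]=\sum_{|\nu|\le|\mu|}a_{\mu\nu}\binom{n}{|\nu|}2^{\,|\nu|-\ell(\nu)}g^\nu$ is a polynomial in $n$, so the theorem follows by linearity.

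The inner-product bookkeeping in $\Gamma$ and the final assembly are routine. The genuine obstacle is the factorial-$Q$-function input used in the middle step: the binomial formula and, above all, the identification of the span of the $Q^*_\nu$ as a filtered algebra of functions on $\mcal{SP}$ whose associated graded is $\Gamma$, so that every supersymmetric function, read off the parts, lies in it. Establishing this --- the vanishing characterisation of $Q^*_\mu$, its stability under adjoining a variable, and the identification of its top-degree component with $Q_\mu$ --- is precisely where the theory of factorial Schur $Q$-functions, the paper's declared main tool, must be developed.
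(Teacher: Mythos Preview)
Your argument is correct and follows essentially the same route as the paper: reduce to a basis of $\Gamma$, invoke the factorial Schur $P$-functions and the binomial formula $P^*_\mu(\lambda)=|\lambda|^{\downarrow|\mu|}g^{\lambda/\mu}/g^\lambda$, and compute the average by an inner-product calculation in $\Gamma$. The only difference is cosmetic: you work directly in the basis $\{P^*_\mu\}$ (your $Q^*_\mu$ is the paper's $P^*_\mu$), whereas the paper passes through the intermediate basis $\mf{p}_\rho=\sum_\lambda X^\lambda_\rho P^*_\lambda$ and shows $\bE_n[\mf{p}_\rho]=\delta_{\rho,(1^k)}n^{\downarrow k}$; these are related by the triangular change of basis $X^\lambda_\rho$, and your formula $\bE_n[P^*_\mu]=\binom{n}{|\mu|}2^{|\mu|-\ell(\mu)}g^\mu$ is exactly what one gets by summing the paper's Corollary over $\rho$. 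The $\mf{p}_\rho$ are introduced because they are the projective analogues of normalized characters and are needed for the explicit examples and the deformation $\bE_{\mu,n}$, but for the bare polynomiality of $\bE_n[f]$ your more direct version is equally efficient.
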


We introduce a notation $x^{\downarrow k}$ as
$$
x^{\downarrow k}=x(x-1)(x-2) \cdots (x-k+1)
$$
for a variable $x$ and a positive integer $k$.
If $n$ is an integer with $0\le  n < k$ then $n^{\downarrow k}=0$.
We also set $x^{\downarrow 0}=1$.

In some supersymmetric functions $f$, we give the explicit expressions
of $\bE_n[f]$ as linear combinations of 
descending powers $n^{\downarrow j}$.
In fact, we will show 
\begin{align*}
\bE_n[p_3]=&
\sum_{\lambda \in \mcal{SP}_n} \frac{2^{n-\ell(\lambda)} (g^\lambda)^2}{n!}(\lambda_1^3+\lambda_2^3+\cdots
+ \lambda_{\ell(\lambda)}^3) = 3 n^{\downarrow 2}+ n, \\
\bE_n[p_5]=& \sum_{\lambda \in \mcal{SP}_n} \frac{2^{n-\ell(\lambda)} (g^\lambda)^2}{n!}(\lambda_1^5+\lambda_2^5+\cdots
+ \lambda_{\ell(\lambda)}^5) = \frac{40}{3} n^{\downarrow 3}+ 15 n^{\downarrow 2}+ n, \\
\bE_n[p_3^2]=&
\sum_{\lambda \in \mcal{SP}_n} \frac{2^{n-\ell(\lambda)} (g^\lambda)^2}{n!}(\lambda_1^3+\lambda_2^3+\cdots
+ \lambda_{\ell(\lambda)}^3)^2 =
9 n^{\downarrow 4} + 54 n^{\downarrow 3} +31 n^{\downarrow 2}+ n.
\end{align*}

\subsection{A deformation}

Fix a strict partition $\mu$ of $m$.
We define the measure $\mathbb{P}_{\mu,n}$ on $\mcal{SP}_{n+m}$ by
\begin{equation} \label{eq:def_Plan_mu}
\mathbb{P}_{\mu,n} (\lambda)= \frac{m!}{(n+m)!} 
2^{n-\ell(\lambda)+\ell(\mu)} \frac{g^\lambda}{g^\mu} g^{\lambda/\mu}
\qquad (\lambda \in \mcal{SP}_{n+m}).
\end{equation}
Note that $\mathbb{P}_{\mu,n} (\lambda)=0$ 
unless $S(\lambda) \supset S(\mu)$.
We will prove that $\mathbb{P}_{\mu,n}$ is a {\it probability}, i.e., 
$\sum_{\lambda \in \mcal{SP}_{n+m}} \mathbb{P}_{\mu,n}(\lambda)=1$.
For a function $\varphi$ on $\mcal{SP}$, define
$$
\bE_{\mu,n} [\varphi] = \sum_{\lambda \in \mcal{SP}_{n+m}} 
\mathbb{P}_{\mu,n}(\lambda) \varphi(\lambda)=  \sum_{\lambda \in \mcal{SP}_{n+m}}
\frac{m!}{(n+m)!} 
2^{n-\ell(\lambda)+\ell(\mu)} \frac{g^\lambda}{g^\mu} g^{\lambda/\mu} \varphi(\lambda).
$$
The summation $\bE_{\mu,n}[\varphi]$ is considered in \cite{HanXiong2015}.
Note that $\bE_{\emptyset,n}[\varphi]$ is nothing but $\bE_n[\varphi]$.
The following theorem is a slight extension of Theorem \ref{thm:ShPlPoly}.

\begin{thm} \label{thm:ShPlPoly_mu}
Let $\mu$ be a strict partition.
Suppose that $f$ is a supersymmetric function. Then
$\bE_{\mu,n} [f]$ is a polynomial function in $n$.
\end{thm}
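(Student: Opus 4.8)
The plan is to use the theory of factorial (interpolation) Schur $Q$-functions $Q^*_\mu$, $\mu\in\mcal{SP}$, due to Ivanov and Macdonald, in the same way shifted Schur functions are used for the ordinary Plancherel measure. I would take for granted the following properties, writing $\Gamma^*$ for the algebra of shifted supersymmetric functions regarded as functions on $\mcal{SP}$: (i) $\{Q^*_\mu\}_{\mu\in\mcal{SP}}$ is a linear basis of $\Gamma^*$, and each supersymmetric $f\in\Gamma$, read as the function $\lambda\mapsto f(\lambda_1,\dots,\lambda_{\ell(\lambda)})$ on $\mcal{SP}$, belongs to $\Gamma^*$; (ii) the top-degree homogeneous component of $Q^*_\mu$ is the ordinary Schur $Q$-function $Q_\mu$, and the transition matrix between $\{Q^*_\mu\}$ and $\{Q_\mu\}$ is unitriangular for the partial order defined by $|\mu|$ together with containment of shifted diagrams; (iii) $Q^*_\mu(\lambda)=0$ unless $S(\lambda)\supseteq S(\mu)$, and in general
$$
Q^*_\mu(\lambda)=\frac{1}{c_\mu}\,(|\lambda|)^{\downarrow|\mu|}\,\frac{g^{\lambda/\mu}}{g^\lambda}
$$
for a constant $c_\mu$ depending only on $\mu$ (with a suitable normalisation of $Q^*_\mu$).

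The first step is a reduction identity. Fix $\mu\in\mcal{SP}_m$. Substituting (iii) into \eqref{eq:def_Plan_mu}, the factor $g^\lambda g^{\lambda/\mu}$ becomes $c_\mu (g^\lambda)^2 Q^*_\mu(\lambda)/(n+m)^{\downarrow m}$, and after collecting the powers of $2$ one recognises the ordinary shifted Plancherel weight on $\mcal{SP}_{n+m}$; this gives
$$
\bE_{\mu,n}[\varphi]=\frac{C_\mu}{(n+m)^{\downarrow m}}\,\bE_{n+m}\bigl[\,Q^*_\mu\cdot\varphi\,\bigr],\qquad C_\mu=\frac{m!\,c_\mu\,2^{\,\ell(\mu)-m}}{g^\mu},
$$
for every function $\varphi$ on $\mcal{SP}$. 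A short computation from (iii), together with the identity $\sum_{\lambda\in\mcal{SP}_{n+m}}2^{(n+m)-\ell(\lambda)}g^\lambda g^{\lambda/\mu}=\tfrac{(n+m)!\,2^{\,m-\ell(\mu)}}{m!}g^\mu$ — the natural $g^{\lambda/\mu}$-analogue of the identity $\sum_\lambda 2^{N-\ell(\lambda)}(g^\lambda)^2=N!$ quoted in Subsection~\ref{subsec:SPM}, from which it can be deduced — yields the evaluation
$$
\bE_{n+m}[Q^*_\mu]=\frac{(n+m)^{\downarrow m}\,2^{\,m-\ell(\mu)}\,g^\mu}{c_\mu\,m!},
$$
which in particular makes $\bE_{\mu,n}[1]=1$: this settles, as a byproduct, the assertion that $\mathbb{P}_{\mu,n}$ is a probability. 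The same computation shows that $\bE_{n+m}[Q^*_\nu]$ is, for every $\nu\in\mcal{SP}$, a polynomial in $n+m$ equal to $(n+m)^{\downarrow|\nu|}$ times a constant. (Polynomiality of $\bE_N[Q^*_\nu]$ can also be obtained from Theorem~\ref{thm:ShPlPoly} alone: by (ii), $Q_\nu-Q^*_\nu$ is a combination of $Q^*_\rho$ with $|\rho|<|\nu|$, so $\bE_N[Q^*_\nu]=\bE_N[Q_\nu]-\bE_N[Q_\nu-Q^*_\nu]$ is polynomial by induction on $|\nu|$, the term $\bE_N[Q_\nu]$ being polynomial since $Q_\nu$ is supersymmetric.)

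Now let $f$ be supersymmetric. By (i) the product $Q^*_\mu\cdot f$ lies in $\Gamma^*$, hence is a finite combination $Q^*_\mu\cdot f=\sum_\nu a_\nu Q^*_\nu$. Since $Q^*_\mu\cdot f$ vanishes at every $\lambda$ with $|\lambda|<m$, while for each $k$ the matrix $\bigl(Q^*_\nu(\lambda)\bigr)_{\nu,\lambda\in\mcal{SP}_k}$ is triangular (for containment) with nonzero diagonal, evaluating at strict partitions of sizes $0,1,2,\dots$ successively forces $a_\nu=0$ whenever $|\nu|<m$. Therefore
$$
\bE_{\mu,n}[f]=\frac{C_\mu}{(n+m)^{\downarrow m}}\sum_{|\nu|\ge m}a_\nu\,\bE_{n+m}[Q^*_\nu],
$$
and for each $\nu$ with $|\nu|\ge m$ the polynomial $\bE_{n+m}[Q^*_\nu]$ — which vanishes at $n+m=0,1,\dots,|\nu|-1$ because $Q^*_\nu$ does — is divisible by $(n+m)^{\downarrow|\nu|}$, hence by $(n+m)^{\downarrow m}$, since $(n+m)^{\downarrow|\nu|}=(n+m)^{\downarrow m}\cdot n^{\downarrow(|\nu|-m)}$. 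Each summand is thus a polynomial in $n$, and so is $\bE_{\mu,n}[f]$. Taking $\mu=\emptyset$ recovers Theorem~\ref{thm:ShPlPoly}.

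I expect the main difficulty to be property (iii): locating the evaluation formula in the literature on factorial Schur $Q$-functions with its exact normalisation, and in particular checking that the powers of $2$ in $\mathbb{P}_{\mu,n}$ combine precisely so that $\bE_{\mu,n}$ collapses to $\bE_{n+m}$ of a product and so that the combinatorial identity giving $\bE_{n+m}[Q^*_\mu]$ comes out right. A secondary point is the structural input (i), that supersymmetric functions restricted to $\mcal{SP}$ lie in the span of the $Q^*_\nu$; identifying which coefficients $a_\nu$ vanish, which is what makes the denominator $(n+m)^{\downarrow m}$ cancel, is routine once the triangularity in (ii)–(iii) is available.
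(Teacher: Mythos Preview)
Your proof is correct and takes a genuinely different route from the paper's.  The paper never reduces $\bE_{\mu,n}$ to $\bE_{n+m}$.  Instead it introduces a new linear basis $\mf{p}_\rho=\sum_\lambda X^\lambda_\rho P^*_\lambda$ of $\Gamma$, indexed by odd partitions $\rho$, and proves the closed formula $\bE_{\mu,n}[\mf{p}_\rho]=\mf{p}_\rho(\mu)$ for $m_1(\rho)=0$ (Theorem~\ref{thm:mu_average_M}) by direct manipulation of the scalar product on $\Gamma$; polynomiality then follows because the $\mf{p}_\rho$ span $\Gamma$.  Your route, by contrast, substitutes the evaluation formula $P^*_\mu(\lambda)=|\lambda|^{\downarrow m}\,g^{\lambda/\mu}/g^\lambda$ into the weight to rewrite $\bE_{\mu,n}[f]=\dfrac{C_\mu}{(n+m)^{\downarrow m}}\,\bE_{n+m}[P^*_\mu f]$, then expands $P^*_\mu f$ in the $P^*_\nu$-basis and uses the vanishing $P^*_\nu(\lambda)=0$ for $|\lambda|<|\nu|$ to force $a_\nu=0$ when $|\nu|<m$, so that the factor $(n+m)^{\downarrow m}$ divides each term.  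This is the direct analogue of Olshanski's argument for the ordinary Plancherel measure, whereas the paper's approach is designed to yield, as a dividend, the explicit identity $\bE_{\mu,n}[\mf{p}_\rho]=\mf{p}_\rho(\mu)$ and the orthogonality relation Theorem~\ref{thm:ortho_pp}, both used later in the examples.

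Two small points.  First, your ``shifted supersymmetric'' algebra $\Gamma^*$ is nothing new here: by Proposition~\ref{prop:property_factorial}(ii) the $P^*_\nu$ already form a basis of $\Gamma$, so $\Gamma^*=\Gamma$ and your structural input~(i) is automatic.  Second, the identity $\sum_{\lambda\in\mcal{SP}_N}2^{N-\ell(\lambda)}g^\lambda g^{\lambda/\mu}=\dfrac{N!\,2^{\,m-\ell(\mu)}}{m!}\,g^\mu$ is correct, but it does not follow by a formal specialisation of the $\sum 2^{N-\ell(\lambda)}(g^\lambda)^2=N!$ identity alone; the clean proof uses Proposition~\ref{prop:scalar_product}(iii)--(iv) together with the scalar product~\eqref{eq:scalar_p} (equivalently, Lemma~\ref{lem:p1dual}), which is exactly the machinery the paper deploys in the proof of Theorem~\ref{thm:mu_average_M}.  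With that filled in, your argument is complete.
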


\subsection{Content evaluations} \label{subsec:content_evaluations}

For each box $\square =(i,j)$ in $S(\lambda)$, we define
$c_\square=j-i$ and call it the {\it content} of $\square$.
We deal with symmetric functions evaluated by quantities $\widehat{c}_\square$, where
$$
\widehat{c}_\square = \frac{1}{2} c_\square (c_\square +1).
$$

\begin{thm} \label{thm:content_evaluation}
For any symmetric function $F$,
there exists a unique supersymmetric function $\widehat{F}$ in $\Gamma$ such that
$$
\widehat{F}(
\lambda_1,\lambda_2,\dots,\lambda_{\ell(\lambda)}
) = F \( \widehat{c}_\square : \square \in S(\lambda) \)
$$
for any $\lambda \in \mcal{SP}$.
Here $F \(  \widehat{c}_\square: \square \in S(\lambda) \)$
is the specialization of 
the symmetric function $F(x_1,x_2,\dots)$
such that the first $|\lambda|$ variables are substituted 
by $ \widehat{c}_\square$ for boxes $\square$ in $S(\lambda)$,
and all other variables by $0$.
\end{thm}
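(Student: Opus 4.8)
The plan is to build the map $F \mapsto \widehat{F}$ on a convenient algebraic generating set and then check it extends to an algebra homomorphism with the claimed interpolation property. Since the algebra of symmetric functions is a polynomial algebra in the power sums $p_1, p_2, p_3, \dots$, it suffices to define $\widehat{p_r}$ for each $r \ge 1$ and declare $\widehat{\,\cdot\,}$ to be multiplicative; uniqueness is then automatic provided we can show the images land in $\Gamma$ and that $\Gamma$ is itself a polynomial algebra with no hidden relations forcing ambiguity. So the first step is to identify, for each $r$, an element of $\Gamma$ — i.e.\ a supersymmetric function built from $p_1, p_3, p_5, \dots$ — whose evaluation at $(\lambda_1, \dots, \lambda_{\ell(\lambda)})$ equals $\sum_{\square \in S(\lambda)} \widehat{c}_\square^{\;r}$.

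The key computational identity I would isolate first is the $r=1$ case: I claim $\sum_{\square \in S(\lambda)} \widehat{c}_\square = \sum_{\square \in S(\lambda)} \tfrac12 c_\square(c_\square+1)$ is a polynomial in the $p_{2m+1}(\lambda)$. Reading off the shifted diagram, the boxes in row $i$ have contents $0, 1, \dots, \lambda_i - 1$, so $\sum_{\square \in S(\lambda)} \widehat{c}_\square = \sum_i \sum_{c=0}^{\lambda_i - 1} \tfrac12 c(c+1) = \sum_i \binom{\lambda_i+1}{3} = \tfrac16 \sum_i (\lambda_i^3 - \lambda_i) = \tfrac16\bigl(p_3(\lambda) - p_1(\lambda)\bigr)$, which indeed lies in $\Gamma$. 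More generally $\sum_{c=0}^{\lambda_i-1}\widehat{c}^{\,r}$ is, for each fixed $r$, a polynomial in $\lambda_i$ with zero constant term and — crucially — it is an \emph{odd} polynomial in $(\lambda_i - \tfrac12)$ up to lower-order even corrections; the clean way to see the parity is that $\widehat{c} = \tfrac12 c(c+1)$ is invariant under $c \mapsto -c-1$, so $\sum_{c=0}^{\lambda_i - 1}\widehat{c}^{\,r} = \tfrac12\sum_{c=-\lambda_i}^{\lambda_i-1}\widehat{c}^{\,r}$, a sum over a set symmetric about $-\tfrac12$, hence expressible via the odd power sums $p_{2m+1}$ of the shifted coordinates. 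I would make this precise by working with the transformed variables and recording that $\Gamma$ is exactly the space of shifted-symmetric-type functions picked out by this $c \mapsto -c-1$ symmetry; this is the heart of the argument.

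With $\widehat{p_r} \in \Gamma$ defined for all $r$, extend multiplicatively to get an algebra homomorphism $\widehat{\,\cdot\,}\colon \mr{Sym} \to \Gamma$. Multiplicativity of the evaluation side is immediate because $F \mapsto F(\widehat{c}_\square : \square \in S(\lambda))$ is a ring homomorphism for each fixed $\lambda$, so $\widehat{F_1 F_2}$ and $\widehat{F_1}\,\widehat{F_2}$ agree at every $\lambda$; since $\Gamma$-elements are determined by their values on all strict partitions (these values separate points of $\Gamma$ — this needs a brief justification, e.g.\ via the fact that the $p_{2m+1}(\lambda)$ for $\lambda$ ranging over $\mcal{SP}$ take enough independent values, or by a degree/filtration argument), the two supersymmetric functions coincide. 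Uniqueness of $\widehat{F}$ follows from the same separation statement. The main obstacle I anticipate is exactly this last point — verifying that evaluation on strict partitions is faithful on $\Gamma$, and organizing the parity bookkeeping in the previous paragraph cleanly enough that "$\widehat{p_r}$ only uses odd power sums" is transparent rather than a brute-force recursion; once that is in hand, the rest is formal. An alternative, possibly slicker route is to exhibit $\widehat{F}$ via factorial Schur $Q$-functions — the paper's advertised main tool — using their known content-evaluation property at the relevant specialization; that would sidestep the parity computation but requires importing the factorial $Q$-function machinery, so I would present the power-sum argument as the primary proof and remark on the $Q$-function viewpoint.
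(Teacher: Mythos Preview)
Your proposal is correct and shares the paper's core idea: reduce to showing each $\widehat{p}_r(\lambda)=\sum_{\square\in S(\lambda)}\widehat{c}_\square^{\,r}$ lies in $\Gamma$, exploit the invariance of $\widehat{c}=\tfrac12 c(c+1)$ under $c\mapsto -c-1$, and then extend multiplicatively (uniqueness coming from the fact, stated in Section~\ref{section:supersymmetric_functions}, that elements of $\Gamma$ are determined by their values on $\mcal{SP}$).

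The execution differs slightly. The paper first proves a generating-function identity (Lemma~\ref{lem:p_exp1}) expressing $p_{2m+1}(\lambda)$ as $\sum_\square\{(c_\square+1)^{2m+1}-c_\square^{2m+1}\}$, then observes that $(X+1)^{2m+1}-X^{2m+1}$ satisfies $R(X)=R(-X-1)$ and hence is a polynomial in $X(X+1)$ (Lemma~\ref{lem:XtoY}); this gives a triangular relation $p_{2m+1}=2^m(2m+1)\widehat{p}_m+\text{(lower)}$, which is then inverted. You instead argue directly that $Q_r(n):=\sum_{c=0}^{n-1}\widehat{c}^{\,r}$ is an odd polynomial in $n$ (via the symmetrized sum $\tfrac12\sum_{c=-n}^{n-1}$), so that $\widehat{p}_r(\lambda)=\sum_i Q_r(\lambda_i)$ is a $\bQ$-combination of $p_{2m+1}(\lambda)$'s. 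Your route avoids the generating function entirely and is marginally more direct; the paper's route has the side benefit of making the inverse transition \eqref{eq:hatp_to_p} explicit, which it uses to conclude that the $\widehat{p}_k$ form an algebraic basis of $\Gamma$. One small cleanup: your phrase ``odd polynomial in $(\lambda_i-\tfrac12)$ up to lower-order even corrections'' is not quite right---the polynomial $Q_r$ is exactly odd in $\lambda_i$ itself, which is what you need and what your symmetrized-sum argument actually shows (e.g.\ via the difference relation $Q_r(n)-Q_r(n-1)=\widehat{(n-1)}^r=\widehat{(-n)}^r$ forcing $Q_r(n)+Q_r(-n)$ to be constant, hence zero).
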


From Theorems \ref{thm:ShPlPoly_mu} and \ref{thm:content_evaluation},
we obtain the following corollary immediately.
This result was first obtained in \cite{HanXiong2015} very
recently.

\begin{corollary} \label{cor:HX_polynomiality}
Let $\mu$ be a strict partition of $m$.
For any symmetric function $F$, 
$$
\sum_{\lambda \in \mcal{SP}_{n+m}}
\frac{m!}{(n+m)!} 
2^{n-\ell(\lambda)+\ell(\mu)} \frac{g^\lambda}{g^\mu} g^{\lambda/\mu}  F \( \widehat{c}_\square : \square \in S(\lambda) \)
$$
is a polynomial function in $n$.
\end{corollary}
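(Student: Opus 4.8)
The plan is to deduce Corollary~\ref{cor:HX_polynomiality} directly from Theorems~\ref{thm:ShPlPoly_mu} and~\ref{thm:content_evaluation}, so the entire argument is a short chaining of the two preceding results; the substantive work has already been isolated into those theorems. First I would fix a strict partition $\mu$ of $m$ and an arbitrary symmetric function $F$. Applying Theorem~\ref{thm:content_evaluation} to $F$ produces a supersymmetric function $\widehat{F} \in \Gamma$ with the property that, for every $\lambda \in \mcal{SP}$,
$$
\widehat{F}(\lambda_1,\lambda_2,\dots,\lambda_{\ell(\lambda)}) = F\(\widehat{c}_\square : \square \in S(\lambda)\).
$$

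Next I would substitute this identity into the sum appearing in the corollary. Since $\widehat{F}$ is a function on $\mcal{SP}$ obtained by evaluating a supersymmetric function at the parts of $\lambda$, the displayed sum equals
$$
\sum_{\lambda \in \mcal{SP}_{n+m}} \frac{m!}{(n+m)!} 2^{n-\ell(\lambda)+\ell(\mu)} \frac{g^\lambda}{g^\mu} g^{\lambda/\mu}\, \widehat{F}(\lambda_1,\dots,\lambda_{\ell(\lambda)}) = \bE_{\mu,n}[\widehat{F}],
$$
which is precisely the shifted Plancherel average of the supersymmetric function $\widehat{F}$ against the deformed measure $\mathbb{P}_{\mu,n}$. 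Theorem~\ref{thm:ShPlPoly_mu} then applies verbatim with $f = \widehat{F}$, giving that $\bE_{\mu,n}[\widehat{F}]$ is a polynomial function in $n$. Tracing back through the two substitutions, this is exactly the assertion of the corollary.

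There is essentially no obstacle here, since the corollary is designed to be an immediate consequence; the only point requiring a line of care is the bookkeeping in the passage from $F$ on a specialization of symmetric-function variables to $\widehat{F}$ evaluated at the parts of $\lambda$. In particular one should note that the extra variables set to $0$ in the specialization $F\(\widehat{c}_\square : \square \in S(\lambda)\)$ cause no trouble: $F$ is a genuine symmetric function in infinitely many variables, so setting all but finitely many to zero is the standard stable specialization, and Theorem~\ref{thm:content_evaluation} guarantees this coincides with $\widehat{F}$ applied to the finite tuple $(\lambda_1,\dots,\lambda_{\ell(\lambda)})$. With that identification in hand, the polynomiality is inherited from Theorem~\ref{thm:ShPlPoly_mu} with no further estimates.
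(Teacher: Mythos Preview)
Your proposal is correct and follows exactly the approach indicated in the paper, which states that the corollary is obtained ``immediately'' from Theorems~\ref{thm:ShPlPoly_mu} and~\ref{thm:content_evaluation}. Your chaining of Theorem~\ref{thm:content_evaluation} to produce $\widehat{F}\in\Gamma$ followed by Theorem~\ref{thm:ShPlPoly_mu} applied to $f=\widehat{F}$ is precisely what the paper intends.
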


In \cite{HanXiong2015}, the reason why they
consider the quantity $F(\widehat{c}_\square:
\square \in S(\lambda))$ is not presented.
We note that 
the multi-set $\{ \widehat{c}_\square \ | \ \square \in S(\lambda)\}$
forms the collection of squares of eigenvalues  
with respect to projective analogs of Jucys--Murphy elements, see \cite{Nazarov, Sergeev, TW,
VS}
and also \cite[Theorem 3.2]{Borodin1999}.

We give the explicit expressions of 
some content evaluations.
In fact, we will show
\begin{align*} 
\mathbb{E}_n[\widehat{p_2}] =&\sum_{\lambda \in \mcal{SP}_{n}}
\frac{ 2^{n-\ell(\lambda)} (g^\lambda)^2}{n!} 
\sum_{\square \in S(\lambda)}
(\widehat{c}_\square)^2 
= \frac{2}{3} n^{\downarrow 3} + \frac{1}{2} n^{\downarrow 2}, \\
\mathbb{E}_n[\widehat{p}_{(1^2)}]=&
\sum_{\lambda \in \mcal{SP}_{n}}
\frac{ 2^{n-\ell(\lambda)} (g^\lambda)^2}{n!} 
\left\{\sum_{\square \in S(\lambda)}
\widehat{c}_\square \right\}^2
= \frac{1}{12} (n^{\downarrow 4} + 4 n^{\downarrow 3}
-8 n^{\downarrow 2} -2n).
\end{align*}
Furthermore, we will give a new algebraic proof of the identity
$$
\bE_{\mu,n}[ \widehat{p}_1 - \widehat{p}_1(\mu)] 
= \frac{1}{2} n^{\downarrow 2} + |\mu| n,
$$
which is given in \cite[Theorem 1.3]{HanXiong2015}.

\subsection{Related research and the aim} \label{subsec:related}

Let $\mcal{P}_n$ be the set of all (not necessary strict)
partitions of $n$. 
The (traditional) {\it Plancherel probability measure} 
$\mathbb{P}_n^{\mathrm{Plan}}$
on $\mcal{P}_n$ is defined by
$$
\mathbb{P}_n^{\mathrm{Plan}} (\lambda)= 
\frac{(f^\lambda)^2}{n!},
$$
where $f^\lambda$ is the number of standard tableaux of shape $Y(\lambda)$.
Here $Y(\lambda)$ is the ordinary Young diagram of $\lambda$:
$Y(\lambda)=\{(i,j) \in \bZ^2 \ | \ 1 \le i \le \ell(\lambda), \ 
1 \le j \le \lambda_i\}$.
Let $F$ be a symmetric function.
In \cite{Stanley2010} (see also \cite{Han2009}), Stanley proves that
the summation 
$$
\sum_{\lambda \in \mcal{P}_n}\mathbb{P}_n^{\mathrm{Plan}} (\lambda) F(h_\square^2 : \square \in Y(\lambda))
$$
is a polynomial in $n$.
Here $h_\square$ denotes the hook length of the square $\square$
in the Young diagram $Y(\lambda)$.
Panova \cite{Panova2012} shows 
an explicit identity for 
%particular $F$.
the symmetric function $F=F_r(x_1,x_2,\dots)=
\sum_{j \ge 1} \prod_{i=1}^r (x_j-i^2)$.

Moreover, Stanley \cite{Stanley2010} proves that 
the content evaluation
\begin{equation} \label{eq:Plan_ave_content}
\sum_{\lambda \in \mcal{P}_n}\mathbb{P}_n^{\mathrm{Plan}} (\lambda) F(c_\square : \square \in Y(\lambda))
\end{equation}
is also a polynomial in $n$.
Olshanski \cite{Olshanski2010} finds that 
the functions $\lambda \mapsto F(c_\square : \square \in Y(\lambda))$
are seen as shifted-symmetric functions in variables 
$\lambda_1,\lambda_2,\dots$ and obtains an alternative 
algebraic proof for the polynomiality of \eqref{eq:Plan_ave_content}.
Some explicit formulas for particular $F$ are obtained in \cite{Feray2012, FKMO, LT, Lassalle2013, M2011, MN}.
Just as an example, in \cite{FKMO} the identity
$$
\sum_{\lambda \in \mcal{P}_n} \mathbb{P}_n^{\mathrm{Plan}} (\lambda)
\sum_{\square \in Y(\lambda)}  \prod_{i=0}^{k-1} (c_\square^2 -i^2)=
\frac{(2k)!}{ ((k+1)!)^2} n^{\downarrow (k+1)}
$$
is obtained.

We emphasize the fact that the content evaluations
are related to matrix integrals (\cite{M2011, MN}).
For example, 
let us consider the unitary group $U(N)$ with the normalized Haar measure $dU$
and suppose $n \le N$.
Then Weingarten calculus gives the following identity 
\begin{align*}
&\int_{U(N)} |u_{1 1} u_{22} \cdots u_{nn}|^2 \, dU \\
=&
\sum_{k=0}^\infty (-1)^k N^{-(n+k)} \sum_{\lambda \in \mcal{P}_n}
\mathbb{P}_n^{\mathrm{Plan}} (\lambda) h_k (c_{\square}: \ \square \in Y(\lambda)).
\end{align*}
Here $h_k$ are complete symmetric functions.
More general identities (for other classical groups) 
can be seen in \cite{M2011, MN}.

The quantity $F(\widehat{c}_\square: \square \in S(\lambda))$ in Subsection \ref{subsec:content_evaluations}
is a natural projective analog
of $F(c_\square: \square \in Y(\lambda))$,
because the $c_{\square}$ are eigenvalues of
 Jucys--Murphy elements of the symmetric groups,
while the $\widehat{c}_\square$ come from
their projective version.
Unfortunately, 
%the projective content evaluation 
% $F(\widehat{c}_\square: \square \in S(\lambda))$
%does not have any connection with matrix integrals.
it is not known any direct connection between matrix integrals and
the projective content evaluation 
$F(\widehat{c}_\square: \square \in S(\lambda))$.

Our results in this paper are seen as the counterparts of the content evaluation
\cite{Olshanski2010} in the theory of the shifted Plancherel measure.
As Olshanski does in \cite{Olshanski2010}, we employ factorial versions of
symmetric functions.
Specifically, we introduce 
a new family of 
supersymmetric functions
$(\mf{p}_{\rho})_{\rho \in \mcal{OP}}$.
The function $\mf{p}_\rho$ is also regarded as 
projective (or spin) irreducible character values 
of the symmetric groups.
For ordinary partitions, the counterpart is 
the normalized linear character, which has been studied
in e.g. \cite{CGS2004, DolegaFeray, Feray2010, IvanovKerov, Stanley2006},
and written as $p^{\#}_\rho$,
$\widehat{\chi}_\rho$, 
$\mathrm{Ch}_\rho$, $\dots$ in their articles.
We will provide explicit values of shifted Plancherel averages 
$\bE_{\mu,n}[\mf{p}_\rho]$
for all strict partitions $\mu$ and odd partitions $\rho$.

As mentioned above, Corollary \ref{cor:HX_polynomiality}
is obtained by Han and Xiong \cite{HanXiong2015}.
Our purpose in this paper is to provide more insight for their result,
based on the theory of factorial Schur $Q$-functions.
As a result, we can obtain some new identities given in Subsections \ref{subsec:SPM}
and \ref{subsec:content_evaluations} in a simple way.

\subsection{Outline of the paper}

The paper is organized as follows.
Section \ref{section:supersymmetric_functions} gives definitions and 
basis properties of Schur $P$- and factorial Schur $P$-functions.
A more detailed description can be seen in \cite{Ivanov1999, Ivanov2005, Macdonald}.
In Section \ref{sec:NewSupersymmetricFunctions} we introduce
new supersymmetric functions $\mf{p}_\rho$ and provide some necessary properties.
In Section \ref{sec:ShPlAv_Proof} we give the proofs of 
Theorem \ref{thm:ShPlPoly} and Theorem \ref{thm:ShPlPoly_mu}. New identities presented in Subsection \ref{subsec:SPM} are also proved.
In Section \ref{sec:Content_Evaluation} we give a proof of Theorem \ref{thm:content_evaluation} and present some examples of 
content evaluations.
In Section \ref{sec:Remark_HX} we deal with 
some family of functions on $\mcal{SP}$
introduced in \cite{HanXiong2015}
and show that they are supersymmetric functions.
We comment on some remaining questions in Section \ref{sec:OpenProblems}.

\section{Supersymmetric functions} \label{section:supersymmetric_functions}

\subsection{The algebra of supersymmetric functions}

A {\it symmetric function} is a collection of
polynomials $F=(F_N)_{N =1,2,\dots}$ with rational coefficients
such that
\begin{itemize}
\item each $F_N$ is symmetric in
$N$ commutative variables $x_1,x_2, \dots,x_N$;
\item the stability relation
$F_{N+1}(x_1,\dots,x_N,0)=F_N(x_1,\dots,x_N)$
holds for all $N \ge 1$.
\end{itemize}
We often write $F$ as $F(x_1,x_2,\dots)$ in infinitely-many variables $x_1,x_2,\dots$.

For each $r=1,2,\dots,$ the $r$-th power-sum symmetric function $p_r$
is given by
$$
p_r(x_1,x_2,\dots)=x_1^r+x_2^r+ \cdots.
$$
It is well known that the $p_r$ generate the algebra of 
all symmetric functions and are algebraically independent over $\bQ$.

\begin{definition}
Let $\Gamma$ denote the subalgebra of symmetric functions generated by 
$p_{2m+1}$, $m=0,1,2,\dots$.
We say elements  in $\Gamma$ to be {\it supersymmetric functions}.
\end{definition}

Let us review supersymmetric functions along 
\cite[Chapter III.8]{Macdonald}.
Define
$$
p_{\rho}= p_{\rho_1} p_{\rho_2} \cdots p_{\rho_l}
$$
for $\rho=(\rho_1,\rho_2,\dots,\rho_l) \in \mcal{OP}$.
The $p_\rho$ form a linear basis of $\Gamma$ by definition.
The scalar product on $\Gamma$ is defined by
\begin{equation} \label{eq:scalar_p}
\langle p_\rho, p_{\sigma} \rangle = 2^{-\ell(\rho)} z_\rho \delta_{\rho \sigma}
\end{equation}
for $\rho, \sigma \in \mcal{OP}$, where
$$
z_\rho= \prod_{r \ge 1} r^{m_r(\rho)} m_r(\rho)!,
$$
and $m_r(\rho)$ is the multiplicity of $r$ in $\rho$:
$m_r(\rho)= |\{i \in \{1,2,\dots, \ell(\rho)\} \ | \ \rho_i=r\}|$.

%The following lemma is seen in 
%\cite[III.8, Example 11]{Macdonald}.
%
%\begin{lemma} \label{lem:p1dual}
%For $f, g \in \Gamma$, 
%$$
%\langle p_1 f, g \rangle = \frac{1}{2} 
%\left\langle f, \frac{\partial}{\partial p_1} g \right\rangle.
%$$
%Here the differential operator 
%$\frac{\partial}{\partial p_1}$ acts on functions in $\Gamma$ 
%expressed as polynomials in $p_1,p_3,p_5,\dots$.
%\end{lemma}

Given an element $f$ in $\Gamma$ and a strict partition $\lambda$,
we denote by $f(\lambda)$ the value
$$
f(\lambda_1,\lambda_2,\dots,\lambda_{\ell(\lambda)},0,0,\dots).
$$
For example, 
$p_3(\lambda)= \lambda_1^3+\lambda_2^3+ \cdots + \lambda_{\ell(\lambda)}^3$.
Elements in $\Gamma$ are uniquely defined by their values
on $\mcal{SP}$, i.e., 
two elements $f,g$ in $\Gamma$ coincide with each other 
if and only if it holds that $f(\lambda) = g(\lambda)$ 
for every strict partition $\lambda$.

\subsection{Schur $P$-functions}

Let us review the Schur $P$-function, which
is the particular $t=-1$ case of the Hall--Littlewood function
with parameter $t$.
We use the definition in \cite{Ivanov1999}.
See also \cite[III.8]{Macdonald} and  \cite{HH} for details.

\begin{definition}
Let $\lambda=(\lambda_1,\lambda_2,\dots,\lambda_l)
 \in \mcal{SP}$.
Suppose that $N \ge l= \ell(\lambda)$.
We define a polynomial $P_{\lambda |N}$ by 
$$
P_{\lambda |N}(x_1,\dots,x_N) 
= \frac{1}{(N-l)!} \sum_{\omega \in S_N} \omega \Big(
x_1^{\lambda_1}
x_2^{\lambda_2} \cdots x_l^{\lambda_l}
\prod_{\begin{subarray}{c} i: 1 \le i \le l, \\
j: i<j \le N \end{subarray}}
\frac{x_i+x_j}{x_i-x_j}
 \Big),
$$
where the symmetric group $S_N$ acts by permuting the variables 
$x_1,\dots,x_N$.
If $\ell(\lambda)>N$, then we set $P_{\lambda |N}(x_1,\dots,x_N)=0$.
Then the collection $(P_{\lambda |N})_{N =1,2,\dots}$ defines
an element $P_{\lambda}$ in $\Gamma$.
Define $Q_{\lambda}$ by $Q_{\lambda}=2^{\ell(\lambda)} P_{\lambda}$.
We call $P_\lambda$ and $Q_{\lambda}$ 
a {\it Schur $P$-function} and {\it Schur $Q$-function}, respectively.
\end{definition}

\begin{prop} \label{prop:scalar_product}
\begin{enumerate}
\item[(i)] The family $(P_\lambda)_{\lambda \in \mcal{SP}}$
forms a linear basis of $\Gamma$.
\item[(ii)] $\langle P_\lambda, Q_\mu \rangle = \delta_{\lambda \mu}$
for $\lambda, \mu \in \mcal{SP}$.
\item[(iii)] $\sum_{\lambda \in \mcal{SP}_n} 2^{-\ell(\lambda)} 
\langle f, Q_\lambda \rangle \langle g, Q_\lambda \rangle
= \langle f, g \rangle$ for $f,g \in \Gamma$.
\item[(iv)] Suppose $|\lambda| \ge |\mu|$. Then 
$g^{\lambda/\mu} =\langle p_1^{|\lambda|-|\mu|} P_\mu, Q_\lambda \rangle$
for $\lambda, \mu \in \mcal{SP}$,
where $g^{\lambda/\mu}$ is the number of
standard tableaux of shape $S(\lambda/\mu)$. 
In particular,
$g^{\lambda} = g^{\lambda/\emptyset}=\langle p_1^{|\lambda|}, Q_\lambda \rangle$.
\end{enumerate}
\end{prop}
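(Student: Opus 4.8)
The plan is to derive the four parts in turn from the basic structure theory of Schur $P$- and $Q$-functions, as developed in \cite[III.8]{Macdonald}. Parts (i)--(iii) are formal once the Cauchy identity is available; part (iv) is where the shifted-tableau combinatorics enters, and I expect it to be the main obstacle.

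For (i): the defining formula shows $P_\lambda$ is homogeneous of degree $|\lambda|$, and (ordering the variables so that $x_i$ dominates $x_j$ for $i<j$ and expanding each factor $\frac{x_i+x_j}{x_i-x_j} = 1 + 2(x_j/x_i) + \cdots$) its expansion in the monomial basis has the form $P_\lambda = m_\lambda + \sum c_{\lambda\mu} m_\mu$, the remaining sum running over partitions $\mu$ strictly below $\lambda$ in dominance order, while the top monomial $x_1^{\lambda_1}\cdots x_l^{\lambda_l}$ (with $l=\ell(\lambda)$) acquires coefficient exactly $1$ thanks to the factor $1/(N-l)!$ and the strictness of $\lambda$. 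Hence the $P_\lambda$ with $\lambda \in \mcal{SP}_n$ are linearly independent. Since $\Gamma = \bQ[p_1,p_3,p_5,\dots]$ is a polynomial algebra, $\dim\Gamma_n$ equals the number of partitions of $n$ into odd parts, which is $|\mcal{OP}_n| = |\mcal{SP}_n|$ by the Euler-type identity recalled in the introduction; as each $P_\lambda \in \Gamma$ by construction, comparing dimensions shows $(P_\lambda)_{\lambda\in\mcal{SP}_n}$ is a basis of $\Gamma_n$, and (i) follows by summing over $n$.

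For (ii) and (iii): I would combine the Cauchy identity $\sum_{\lambda\in\mcal{SP}} P_\lambda(x)Q_\lambda(y) = \prod_{i,j}\frac{1+x_iy_j}{1-x_iy_j}$ with the power-sum expansion $\prod_{i,j}\frac{1+x_iy_j}{1-x_iy_j} = \sum_{\rho\in\mcal{OP}} 2^{\ell(\rho)}z_\rho^{-1}p_\rho(x)p_\rho(y)$, which by \eqref{eq:scalar_p} exhibits the product as the reproducing kernel for the scalar product: $\langle g(x),\prod_{i,j}\frac{1+x_iy_j}{1-x_iy_j}\rangle = g(y)$ for all $g\in\Gamma$, the pairing taken in the $x$-variables. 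Applying $\langle Q_\mu,\cdot\rangle$ in the $x$-variables to the Cauchy identity gives $\sum_\lambda \langle Q_\mu,P_\lambda\rangle\, Q_\lambda(y) = Q_\mu(y)$, and since the $(Q_\lambda)$ form a basis (by (i)) this forces $\langle P_\lambda,Q_\mu\rangle = \delta_{\lambda\mu}$, which is (ii). For (iii), note $\langle P_\lambda,P_\mu\rangle = 2^{-\ell(\lambda)}\delta_{\lambda\mu}$ by (ii) together with $Q_\lambda = 2^{\ell(\lambda)}P_\lambda$; by bilinearity and orthogonality of distinct graded components it suffices to take $f,g$ homogeneous of degree $n$, and then $f = \sum_{\lambda\in\mcal{SP}_n}\langle f,Q_\lambda\rangle P_\lambda$ (the coefficients read off from (ii)), similarly for $g$, so $\langle f,g\rangle = \sum_{\lambda\in\mcal{SP}_n} 2^{-\ell(\lambda)}\langle f,Q_\lambda\rangle\langle g,Q_\lambda\rangle$.

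For (iv): the key input is the Pieri-type identity $p_1 P_\mu = \sum_\nu P_\nu$, the sum over strict partitions $\nu$ with $S(\nu)$ obtained from $S(\mu)$ by adjoining a single box, every coefficient being exactly $1$. This follows from the general rule $Q_\mu q_1 = \sum_\nu 2^{e(\mu,\nu)}Q_\nu$ of \cite[III.8]{Macdonald}, where $e(\mu,\nu)=1$ when the added box lies off the main diagonal (equivalently $\ell(\nu)=\ell(\mu)$) and $e(\mu,\nu)=0$ when it is a diagonal box ($\ell(\nu)=\ell(\mu)+1$): substituting $q_1 = 2p_1$ and $Q_\kappa = 2^{\ell(\kappa)}P_\kappa$, the coefficient of $P_\nu$ in $p_1 P_\mu$ becomes $2^{\,\ell(\nu)-\ell(\mu)-1+e(\mu,\nu)}$, which equals $2^0=1$ in both cases. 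Iterating this identity $k=|\lambda|-|\mu|$ times expands $p_1^k P_\mu$ in the $P$-basis with the coefficient of $P_\lambda$ equal to the number of sequences $\mu = \lambda^{(0)},\lambda^{(1)},\dots,\lambda^{(k)} = \lambda$ of strict partitions in which $S(\lambda^{(i)})$ is obtained from $S(\lambda^{(i-1)})$ by adding one box, that is, to $g^{\lambda/\mu}$. Pairing with $Q_\lambda$ and using (ii) extracts precisely this coefficient, giving $\langle p_1^{|\lambda|-|\mu|}P_\mu,Q_\lambda\rangle = g^{\lambda/\mu}$; the special case $\mu=\emptyset$, $P_\emptyset = 1$, gives the last assertion. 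The only delicate point in the whole argument is the bookkeeping of powers of $2$ in this Pieri step — the factors coming from the off/on-diagonal distinction and from the normalization $Q = 2^{\ell}P$ must cancel exactly — while everything else reduces to the Cauchy identity, Euler's partition identity, and formal linear algebra.
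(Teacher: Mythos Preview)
Your proposal is correct and follows essentially the same approach as the paper. The paper defers (i)--(ii) to Macdonald while you sketch the monomial-triangularity/Cauchy-kernel arguments explicitly; for (iii) both you and the paper reduce by linearity to basis elements and invoke (ii); and for (iv) your argument is identical to the paper's --- iterate the Pieri rule $p_1 P_\mu = \sum_{\nu \searrow \mu} P_\nu$ to obtain $p_1^k P_\mu = \sum_\lambda g^{\lambda/\mu} P_\lambda$ and then pair with $Q_\lambda$ --- with the added bonus that you carefully verify the power-of-$2$ bookkeeping that makes all Pieri coefficients equal to $1$.
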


\begin{proof}
(i), (ii): See 
\cite[Chapter III, (8.9) and (8.12)]{Macdonald}.

(iii): 
By linearity, it is enough to show the identity for $f=P_\mu$ and 
$g=Q_\nu$ with $|\mu|=|\nu|=n$.
Then both sides are equal to $\delta_{\mu \nu}$ by Claim (ii).

(iv): We recall a special case of 
the Pieri-type formula for Schur $P$-functions
(\cite[Chapter III, (8.15)]{Macdonald}): 
for $\mu \in \mcal{SP}$,
$$
p_1 P_\mu =\sum_{\mu^+ :\mu^+ \searrow \mu} P_{\mu^+},
$$
where the sum runs over strict partitions $\mu^+$ obtained from
$\mu$ by adding one box.
Put $k=|\lambda|-|\mu|$.
The integer $g^{\lambda/\mu}$ is the number of
sequences $(\lambda^{(0)},\lambda^{(1)},\dots, \lambda^{(k)})$ 
of strict partitions such that
$\lambda^{(0)}=\mu$, $\lambda^{(k)}=\lambda$, and such that
$\lambda^{(i)} \searrow \lambda^{(i-1)}$ for each $i=1,2,\dots, k$. 
Therefore we find the formula 
$$
p_1^{k} P_\mu = \sum_{\lambda \in \mcal{SP}_{|\mu|+k}
} g^{\lambda/\mu}
P_\lambda.
$$
Claim (iv) now follows from Claim (ii).
\end{proof}

For a strict partition $\lambda$ and odd partition $\rho$ of sizes $k$, 
we define
\begin{equation} \label{eq:def_X}
X^\lambda_\rho= \langle p_\rho, Q_\lambda \rangle.
\end{equation}
Equivalently, the quantities $X^\lambda_\rho$ are determined as transition matrices via
\begin{equation} \label{eq:trans_p_P}
p_\rho= \sum_{\lambda \in \mcal{SP}_k} X^\lambda_\rho P_\lambda
\qquad \text{or} \qquad 
Q_\lambda= \sum_{\rho \in \mcal{OP}_k}
 2^{\ell(\rho)}z_\rho^{-1} X^\lambda_\rho p_\rho. 
\end{equation}
The quantity $X^\lambda_\rho$ is a character value
for a projective representation of symmetric groups,
see \cite[Chapter 8]{HH}.
One can compute values $X^\lambda_\rho$ recursively 
if we use a Murnaghan--Nakayama rule 
(\cite[Chapter III.8, Example 11]{Macdonald}).
%In \cite{Macdonald}, 
%the $X^\lambda_\rho$ is written as $X^\lambda_\rho(-1)$.
Note that $X^{\lambda}_{(1^{k})}=g^\lambda$ and $X^{(k)}_\rho =1$. Equivalently, 
$$
p_{1}^k = \sum_{\lambda \in \mcal{SP}_k} g^\lambda P_\lambda
\qquad \text{and} \qquad 
Q_{(k)}= \sum_{\rho \in \mcal{OP}_k} 2^{\ell(\rho)} z_{\rho}^{-1} p_\rho.
$$

\begin{prop} \label{prop:OrthoXX}
For $\rho, \sigma \in \mcal{OP}_k$,
$$
\sum_{\lambda \in \mcal{SP}_k} 2^{-\ell(\lambda)} X^\lambda_\rho X^\lambda_{\sigma}
=\delta_{\rho, \sigma} 2^{-\ell(\rho)} z_{\rho}.
$$
\end{prop}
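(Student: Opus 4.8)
The plan is to derive this orthogonality relation for the $X^\lambda_\rho$ directly from the orthogonality relations for Schur $P$- and $Q$-functions already recorded in Proposition~\ref{prop:scalar_product}, together with the definition $X^\lambda_\rho = \langle p_\rho, Q_\lambda\rangle$ in \eqref{eq:def_X}. The key observation is that $2^{-\ell(\lambda)}$ is exactly the weight appearing in Proposition~\ref{prop:scalar_product}(iii), so that formula, applied with $f = p_\rho$ and $g = p_\sigma$, already produces the left-hand side of what we want.

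Concretely, I would first apply Proposition~\ref{prop:scalar_product}(iii) with $f = p_\rho$ and $g = p_\sigma$ for $\rho,\sigma \in \mcal{OP}_k$. Since $\langle p_\rho, Q_\lambda\rangle = X^\lambda_\rho$ and likewise $\langle p_\sigma, Q_\lambda\rangle = X^\lambda_\sigma$, the identity reads
$$
\sum_{\lambda \in \mcal{SP}_k} 2^{-\ell(\lambda)} X^\lambda_\rho X^\lambda_\sigma = \langle p_\rho, p_\sigma\rangle.
$$
Next I would invoke the value of the scalar product on the power-sum basis, namely \eqref{eq:scalar_p}, which gives $\langle p_\rho, p_\sigma\rangle = 2^{-\ell(\rho)} z_\rho\, \delta_{\rho\sigma}$. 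Combining the two displays yields precisely the claimed formula
$$
\sum_{\lambda \in \mcal{SP}_k} 2^{-\ell(\lambda)} X^\lambda_\rho X^\lambda_\sigma = \delta_{\rho,\sigma}\, 2^{-\ell(\rho)} z_\rho.
$$

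There is essentially no obstacle here: the statement is a formal consequence of chaining \eqref{eq:def_X}, Proposition~\ref{prop:scalar_product}(iii), and \eqref{eq:scalar_p}. The only point worth a remark is the one subtlety in applying part (iii): it is stated for general $f,g \in \Gamma$, and one should note that when $\rho \in \mcal{OP}_k$ the expansion of $p_\rho$ in the $P_\lambda$ basis is supported on $\mcal{SP}_k$ (this is the content of \eqref{eq:trans_p_P}), so the sum over $\lambda$ is correctly restricted to strict partitions of the common size $k$; degree considerations make this automatic, since $\langle p_\rho, Q_\lambda\rangle = 0$ unless $|\lambda| = |\rho| = k$. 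With that noted, the proof is just the two-line computation above.
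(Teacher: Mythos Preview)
Your proof is correct and follows essentially the same approach as the paper: both compute $\langle p_\rho, p_\sigma\rangle$ in two ways, once via \eqref{eq:scalar_p} and once by expanding in the $P_\lambda$/$Q_\lambda$ basis. The only cosmetic difference is that you invoke Proposition~\ref{prop:scalar_product}(iii) directly, whereas the paper expands $p_\rho = \sum_\lambda X^\lambda_\rho P_\lambda$ by hand and applies (ii); since (iii) is itself proved from (ii), the two arguments are the same in substance.
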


\begin{proof}
It follows from \eqref{eq:trans_p_P} and 
Proposition \ref{prop:scalar_product} (ii) that 
$$
\langle p_\rho, p_{\sigma} \rangle  
=
\langle \sum_\lambda X^\lambda_\rho P_\lambda, \sum_\mu X^\mu_\sigma P_\mu \rangle \\
= \sum_{\lambda, \mu} X^\lambda_\rho X^\mu_\sigma \langle P_\lambda, P_\mu \rangle 
= \sum_{\lambda}  X^\lambda_\rho X^\lambda_\sigma 2^{-\ell(\lambda)},
$$
the left hand side of which equals
$2^{-\ell(\rho)} z_{\rho} \delta_{\rho \sigma} $ by \eqref{eq:scalar_p}.
\end{proof}

\subsection{Factorial Schur $P$-functions}

The next definition is due to A. Okounkov and 
given in \cite{Ivanov1999}.

\begin{definition} \label{def:factrialP}
Let $\lambda=(\lambda_1,\dots,\lambda_l) \in \mcal{SP}$.
Suppose that $N \ge l= \ell(\lambda)$.
We introduce a polynomial $P_{\lambda |N}^*$ by 
$$
P_{\lambda |N}^*(x_1,\dots,x_N) 
= \frac{1}{(N-l)!} \sum_{\omega \in S_N} \omega \Big(
x_1^{\downarrow \lambda_1}
x_2^{\downarrow \lambda_2} \cdots x_l^{\downarrow \lambda_l}
\prod_{\begin{subarray}{c} i: 1 \le i \le l, \\
j: i<j \le N \end{subarray}}
\frac{x_i+x_j}{x_i-x_j}
 \Big).
$$
If $\ell(\lambda)>N$, then we set $P_{\lambda |N}^*(x_1,\dots,x_N)=0$.
The collection $(P_{\lambda |N}^*)_{N =1,2,\dots}$ defines
an element $P_{\lambda}^*$ in $\Gamma$.
Define $Q_{\lambda}^*$ by $Q_{\lambda}^*=2^{\ell(\lambda)} P_{\lambda}^*$.
We call $P_\lambda^*$ and $Q_{\lambda}^*$ 
the {\it factorial Schur $P$-function} and {\it $Q$-function}, respectively.
\end{definition}

Remark that $P_\lambda$ is homogeneous, whereas
$P_\lambda^*$ is not.
Let us review some properties for factorial Schur $P$-functions.
See \cite{Ivanov1999, Ivanov2005} for detail.
We note that Claims (i)--(iii) in the next proposition
are immediately comfirmed from
definitions, while the proof of claim (iv) requires a more careful work.

\begin{prop} \label{prop:property_factorial}
\begin{enumerate}
\item[(i)] $P_\lambda^*= P_\lambda+g$,
where $g$ is a supersymmetric function of degree less than $|\lambda|$.
\item[(ii)] The family $(P_\lambda^*)_{\lambda \in \mcal{SP}}$
forms a linear basis of $\Gamma$.
\item[(iii)] 
$P_\mu^*(\lambda)=0$ unless $S(\mu) \subset S(\lambda)$.
\item[(iv)] It holds that 
\begin{equation} \label{eq:P*_g}
P_\mu^*(\lambda)= |\lambda|^{\downarrow |\mu|} \frac{g^{\lambda/\mu}}{g^\lambda}
=  |\lambda|^{\downarrow |\mu|} \frac{\langle p_1^{|\lambda|-|\mu|} P_\mu,Q_\lambda
\rangle}{g^\lambda}.
\end{equation}
\end{enumerate}
\end{prop}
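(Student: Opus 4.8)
The plan is to prove Proposition~\ref{prop:property_factorial} by establishing the four claims in order, treating (i)--(iii) quickly from the definition and then focusing the real effort on (iv).

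\textbf{Claims (i)--(iii).} First I would examine the defining sum for $P^*_{\lambda|N}$ and compare it term-by-term with that of $P_{\lambda|N}$. The only difference is that each monomial $x_i^{\lambda_i}$ is replaced by the falling factorial $x_i^{\downarrow \lambda_i} = x_i^{\lambda_i} + (\text{lower order in } x_i)$. Expanding $x_i^{\downarrow\lambda_i}$ into ordinary powers and pushing the expansion through the symmetrization, the top-degree part reproduces $P_{\lambda|N}$ exactly, and every remaining contribution has total degree strictly less than $|\lambda|$; these lower contributions are symmetric and stable in $N$, hence assemble into a supersymmetric function $g$ with $\deg g < |\lambda|$. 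This gives (i). Claim (ii) is then immediate: since $P^*_\lambda = P_\lambda + (\text{lower degree})$ and $(P_\lambda)_{\lambda\in\mcal{SP}}$ is a basis of $\Gamma$ by Proposition~\ref{prop:scalar_product}(i), the triangularity with respect to degree (and, within a fixed degree, with respect to any linear order refining dominance) shows $(P^*_\lambda)$ is also a basis. For (iii), I would evaluate $P^*_\mu(\lambda)$, i.e.\ set $x_i = \lambda_i$ for $1\le i\le \ell(\lambda)$ and $x_i=0$ afterwards, in the formula for $P^*_{\mu|N}$ with $N = \ell(\lambda)$. Each surviving term carries a factor $x_{\omega(1)}^{\downarrow\mu_1}\cdots x_{\omega(\ell(\mu))}^{\downarrow\mu_{\ell(\mu)}}$; if $S(\mu)\not\subset S(\lambda)$ then for every permutation $\omega$ at least one argument $\lambda_j$ is a nonnegative integer strictly smaller than the corresponding part of $\mu$, forcing that falling factorial to vanish. (Here I use the stated convention $n^{\downarrow k}=0$ for $0\le n<k$.) Hence $P^*_\mu(\lambda)=0$.

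\textbf{Claim (iv).} This is the substantive part. The second equality in \eqref{eq:P*_g} is just Proposition~\ref{prop:scalar_product}(iv), so only the first equality $P^*_\mu(\lambda) = |\lambda|^{\downarrow|\mu|}\, g^{\lambda/\mu}/g^\lambda$ needs proof. The approach I favor is the vanishing-characterization (interpolation) argument of Okounkov--Ivanov: the functions $P^*_\mu$ are uniquely characterized in $\Gamma$ (up to scalar) by the property that $\deg P^*_\mu = |\mu|$, that $P^*_\mu$ agrees with $P_\mu$ in top degree, and that $P^*_\mu(\nu) = 0$ for all strict $\nu$ with $|\nu| \le |\mu|$, $\nu\ne\mu$ (the extra vanishing beyond (iii) being forced by a dimension count). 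So the plan is: (a) define $\Psi_\mu(\lambda) := |\lambda|^{\downarrow|\mu|}\, g^{\lambda/\mu}/g^\lambda$ as a function on $\mcal{SP}$; (b) show $\Psi_\mu$ is the restriction of a supersymmetric function of degree $|\mu|$ with the same top-degree term as $P_\mu$; (c) check $\Psi_\mu$ satisfies the same vanishing conditions; (d) conclude $\Psi_\mu = P^*_\mu$ by uniqueness, and finally evaluate to get \eqref{eq:P*_g}. For step (b) I would use the branching/Pieri structure: from $p_1 P_\mu = \sum_{\mu^+\searrow\mu} P_{\mu^+}$ one gets, dually, a recursion expressing $g^{\lambda/\mu}$ via $g^{\lambda/\mu^+}$, and the polynomial generating function $\sum_k t^k/k! \cdot (\text{stuff})$ organizes $|\lambda|^{\downarrow|\mu|} g^{\lambda/\mu}/g^\lambda$ into the correct shifted-symmetric (here supersymmetric, after the appropriate change of variables) object; alternatively one can invoke directly the content/coproduct formalism of \cite{Ivanov1999, Ivanov2005}, where this is essentially the definition of the factorial $P$-function via its values. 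The cleanest route may simply be to cite that $P^*_\mu$ is \emph{defined} by the vanishing property in that reference and then verify that $\Psi_\mu$ has it, rather than re-deriving the combinatorics from the determinant/symmetrization formula.

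\textbf{Main obstacle.} The delicate point is matching the normalization and proving the \emph{extra} vanishing $P^*_\mu(\nu)=0$ for $|\nu|=|\mu|$, $\nu\ne\mu$ (and more generally $|\nu|\le|\mu|$), since (iii) as stated only gives vanishing when $S(\mu)\not\subset S(\nu)$, which for $|\nu|=|\mu|$ coincides with $\nu\ne\mu$ only because equal sizes force $S(\mu)=S(\nu)$ — so in fact that case is fine, and the real work is a dimension/triangularity count ensuring these vanishing conditions, together with the degree and leading term, pin down $P^*_\mu$ uniquely. Equivalently, one must show the evaluation map $f\mapsto (f(\nu))_{|\nu|\le|\mu|}$ is injective on the span of $\{P^*_\sigma : |\sigma|\le|\mu|\}$, which follows from claim (iii) arranged as a triangular (unitriangular, after normalization) system with respect to inclusion of shifted diagrams. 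I expect the bookkeeping of the falling-factorial normalization $|\lambda|^{\downarrow|\mu|}$ versus the leading coefficient of $P_\mu$ to be the most error-prone step, but it is routine once the interpolation framework is set up; no genuinely hard new idea is needed beyond what is in \cite{Ivanov1999}.
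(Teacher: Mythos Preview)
Your proposal is correct and in fact goes further than the paper itself: the paper does not prove Proposition~\ref{prop:property_factorial} at all but simply states the four claims, notes that (i)--(iii) follow immediately from the definitions while (iv) ``requires a more careful work,'' and refers the reader to \cite{Ivanov1999, Ivanov2005} for details. Your sketch of (i)--(iii) from the symmetrization formula is exactly what the paper has in mind, and your plan for (iv) via the Okounkov--Ivanov interpolation/vanishing characterization is precisely the content of those cited references; one minor point is that your argument for (iii) implicitly uses a pigeonhole step (if $S(\mu)\not\subset S(\lambda)$ then \emph{every} injection $\omega$ sends some $i$ to an index with $\lambda_{\omega(i)}<\mu_i$), which is routine but worth making explicit.
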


On the right hand side of \eqref{eq:P*_g}, we can think $|\lambda| -|\mu|$ being nonnegative, because 
if $|\lambda| <|\mu|$ then 
$|\lambda|^{\downarrow |\mu|}=0$.

Next we give a formula for 
an expansion of $P_\lambda$ in terms of $P_\mu^*$.
Recall the Stirling numbers $T(k,j)$ of the second kind
defined by
\begin{equation} \label{eq:stirling_2}
x^{k} = \sum_{j=1}^k T(k,j) x^{\downarrow j} \qquad (k=1,2,\dots).
\end{equation}

\begin{prop} \label{prop:PtoP*_Stirling}
Let $\lambda$ be a strict partition of length $l$.
Then
$$
P_\lambda= \sum_{j_1=1}^{\lambda_1} \cdots \sum_{j_l=1}^{\lambda_l}
T(\lambda_1,j_1) \cdots T(\lambda_l,j_l) P^*_{(j_1,\dots,j_l)}.
$$
Here we set 
$P^*_{(j_1,\dots,j_l)}=0$ if 
$j_1,\dots, j_l$ are not pairwise distinct, and 
$$
P^*_{(j_1,\dots,j_l)}= (\sgn \pi) P_{\mu}^*
$$
if $(j_1,\dots,j_l)=(\mu_{\pi(1)},\dots, \mu_{\pi(l)})$
for some strict partition $\mu=(\mu_1,\dots,\mu_l)$ of length $l$ with
a permutation $\pi$. 
\end{prop}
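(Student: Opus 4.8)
The plan is to reduce the multi-variable statement to the one-variable Stirling relation \eqref{eq:stirling_2} by exploiting the structure of the defining formulas for $P_{\lambda|N}$ and $P^*_{\mu|N}$, which differ only in replacing each monomial $x_i^{\lambda_i}$ by the falling factorial $x_i^{\downarrow \lambda_i}$. Fix $N \ge l$ and work inside the polynomial ring in $x_1,\dots,x_N$. Using \eqref{eq:stirling_2} on each of the $l$ exponents, I would write
$$
x_1^{\lambda_1} x_2^{\lambda_2} \cdots x_l^{\lambda_l}
= \sum_{j_1=1}^{\lambda_1} \cdots \sum_{j_l=1}^{\lambda_l}
T(\lambda_1,j_1) \cdots T(\lambda_l,j_l)\,
x_1^{\downarrow j_1} x_2^{\downarrow j_2} \cdots x_l^{\downarrow j_l}.
$$
Multiplying by the common Hall--Littlewood factor $\prod_{1\le i\le l,\ i<j\le N}\frac{x_i+x_j}{x_i-x_j}$, applying the symmetrization operator $\frac{1}{(N-l)!}\sum_{\omega\in S_N}\omega(\cdot)$, and using linearity, I obtain
$$
P_{\lambda|N}(x_1,\dots,x_N)
= \sum_{j_1=1}^{\lambda_1}\cdots\sum_{j_l=1}^{\lambda_l}
T(\lambda_1,j_1)\cdots T(\lambda_l,j_l)\,
\frac{1}{(N-l)!}\sum_{\omega\in S_N}\omega\Big(
x_1^{\downarrow j_1}\cdots x_l^{\downarrow j_l}
\prod_{\begin{subarray}{c} i: 1\le i\le l,\\ j: i<j\le N\end{subarray}}
\frac{x_i+x_j}{x_i-x_j}\Big).
$$
Since $N \ge l$ is arbitrary and these polynomials are stable, this passes to an identity in $\Gamma$.

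The remaining task is to identify the inner symmetrized expression, call it $R_{(j_1,\dots,j_l)|N}$, with the factorial Schur $P$-function labelled by the tuple $(j_1,\dots,j_l)$ under the sign/vanishing conventions of the statement. Here I would distinguish cases. If $j_1,\dots,j_l$ are pairwise distinct, let $\pi\in S_l$ be the permutation sorting them into a strict partition $\mu$, so $(j_1,\dots,j_l)=(\mu_{\pi(1)},\dots,\mu_{\pi(l)})$. The key point is that permuting the exponents $x_1^{\downarrow j_1}\cdots x_l^{\downarrow j_l}$ among the first $l$ slots, after full symmetrization over $S_N$, only changes the expression by the Jacobian sign coming from the antisymmetry of the factor $\prod_{i<j\le N}\frac{x_i+x_j}{x_i-x_j}$ in the variables $x_1,\dots,x_l$; tracking this sign gives exactly $R_{(j_1,\dots,j_l)|N}=(\sgn\pi)\,P^*_{\mu|N}$, matching the stated convention $P^*_{(j_1,\dots,j_l)}=(\sgn\pi)P^*_\mu$. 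If two of the $j_i$ coincide, then $R_{(j_1,\dots,j_l)|N}=0$: in that case the symmetrized expression is antisymmetric under the transposition swapping the two equal exponents (again by the sign behavior of the Hall--Littlewood factor) yet invariant under it (the exponents are equal), forcing it to vanish — consistent with the convention $P^*_{(j_1,\dots,j_l)}=0$.

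I expect the sign bookkeeping in the pairwise-distinct case to be the main obstacle: one must verify carefully how the operator $\frac{1}{(N-l)!}\sum_{\omega\in S_N}\omega(\cdot)$ interacts with a permutation of the exponents in the first $l$ slots, given that the factor $\prod_{1\le i\le l,\,i<j\le N}\frac{x_i+x_j}{x_i-x_j}$ is not symmetric but only sign-symmetric in $x_1,\dots,x_l$ (the $x_i-x_j$ in the denominator contributes the sign, while the $x_i+x_j$ and all factors with $j>l$ are genuinely symmetric in $x_1,\dots,x_l$). The cleanest way to handle this is to first symmetrize only over the subgroup $S_l \times S_{N-l}$ and observe that already at that stage $R_{(j_1,\dots,j_l)|N}$ picks up the sign $\sgn\pi$ relative to the sorted tuple, after which averaging over the remaining coset representatives is a symmetric operation that preserves the identification; the vanishing in the non-distinct case then drops out as the degenerate subcase. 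Everything else — the application of \eqref{eq:stirling_2}, linearity of symmetrization, and stability in $N$ — is routine.
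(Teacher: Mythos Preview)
Your proposal is correct and follows essentially the same route as the paper: extend the defining formula for $P^*_{\lambda|N}$ to arbitrary tuples $(j_1,\dots,j_l)$, use the alternating behavior of the factor $\prod_{1\le i\le l,\ i<j\le N}\frac{x_i+x_j}{x_i-x_j}$ under permutations of $x_1,\dots,x_l$ to establish the sign rule and the vanishing for repeated indices, and then apply the Stirling expansion $x^{\lambda_i}=\sum_{j_i} T(\lambda_i,j_i)\,x^{\downarrow j_i}$ termwise inside the symmetrization. The paper's own proof is more terse on the sign bookkeeping (it simply records the alternating property and deduces the two conventions), while you spell out the interaction with the full $S_N$-symmetrization and propose factoring through $S_l\times S_{N-l}$; but the underlying argument is the same.
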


\begin{proof}
The definition of $P^*_{\lambda|N} (x_1,\dots,x_N)$ in 
Definition \ref{def:factrialP} makes sense even if 
$(\lambda_1,\dots, \lambda_l)$ is replaced with  
any sequence of positive integers $(j_1,\dots,j_l)$.
From the alternating property
$$
\pi \Big( \prod_{\begin{subarray}{c} i: 1 \le i \le l, \\
j: i<j \le N \end{subarray}}
\frac{x_i+x_j}{x_i-x_j} \Big) = (\sgn \pi) \prod_{\begin{subarray}{c} i: 1 \le i \le l, \\
j: i<j \le N \end{subarray}}
\frac{x_i+x_j}{x_i-x_j}
$$
for a permutation $\pi \in S_l$ acting on variables $x_1,\dots,x_l$,
we have 
$$
P^*_{(j_{\pi(1)},\dots,j_{\pi(l)})|N}(x_1,\dots,x_N) =
(\sgn \pi) P^*_{(j_1,\dots,j_l)|N}(x_1,\dots,x_N).
$$
In particular, 
$P^*_{(j_1,\dots,j_l)}=0$ if $j_s=j_t$ for some $s \not=t$.
Our proposition follows from the
definitions of $P_{\lambda}, P^*_\mu$, and  Stirling numbers.
\end{proof}

\section{New supersymmetric functions} \label{sec:NewSupersymmetricFunctions}

Recall the fact that
two families 
 $(P_\lambda)_{\lambda \in \mcal{SP}}$ and  
 $(P_\lambda^*)_{\lambda \in \mcal{SP}}$
 are liner bases of $\Gamma$.
Define a linear isomorphism $\Psi: \Gamma \to \Gamma$ by
$$
\Psi (P_\lambda)= P_{\lambda}^* \qquad (\lambda \in \mcal{SP}).
$$
Note that $\Psi^{-1} (f)$ coincides with the  top-degree term of $f$
by Proposition \ref{prop:property_factorial} (i).

\begin{definition}
For each $\rho \in \mcal{OP}_k$, we define the supersymmetric function 
$\mf{p}_\rho$ by
$\mf{p}_\rho = \Psi (p_\rho)$. From \eqref{eq:trans_p_P}, we have 
$$
\mf{p}_\rho = \sum_{\lambda \in \mcal{SP}_k} X^\lambda_\rho P_\lambda^*. 
$$ 
\end{definition}

For an odd partition $\rho$, we denote by $\tilde{\rho}$
the odd partition obtained from $\rho$ by erasing parts equal to $1$.
For example,
if $\rho=(5,5,3,1,1)$, then $\tilde{\rho}=(5,5,3)$.
Note that
$|\rho|=|\tilde{\rho}|+m_1(\rho)$ and 
$\ell(\rho)= \ell(\tilde{\rho})+m_1(\rho)$.

\begin{prop} \label{prop:mp}
\begin{enumerate}
\item[(i)] $\mf{p}_\rho= p_{\rho} + g$,
where $g$ is a supersymmetric function of degree less than $|\rho|$.
\item[(ii)] The family $(\mf{p}_\rho)_{\rho \in \mcal{OP}}$ forms a
linear basis of $\Gamma$.
\item[(iii)]   
For $\rho \in \mcal{OP}$ and $\lambda \in \mcal{SP}$, 
$$
\mf{p}_{\rho} (\lambda) = |\lambda|^{\downarrow |\rho|} \frac{\langle 
p_1^{|\lambda|-|\tilde{\rho}|} p_{\tilde{\rho}}, Q_\lambda \rangle}{g^\lambda}
=|\lambda|^{\downarrow |\rho|} \frac{
X^{\lambda}_{\tilde{\rho} \cup (1^{|\lambda|-|
\tilde{\rho}|})}}{g^\lambda}.
$$
Here $\tilde{\rho} \cup (1^{k})$ denotes the odd partition
$(\tilde{\rho},\underbrace{1,1,\dots,1}_k)$.
\item[(iv)] For $\rho \in \mcal{OP}$ and $\lambda \in \mcal{SP}$, 
$$
\mf{p}_\rho(\lambda)= (|\lambda|-|\tilde{\rho}|)^{\downarrow m_1(\rho)}
\mf{p}_{\tilde{\rho}} (\lambda).
$$
\item[(v)] If $|\tilde{\rho}| > |\lambda|$, then
$\mf{p}_\rho(\lambda)=0$.
\end{enumerate}
\end{prop}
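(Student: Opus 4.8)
The plan is to read off all five statements from the defining expansion $\mf{p}_\rho = \sum_{\nu \in \mcal{SP}_k} X^\nu_\rho P^*_\nu$ with $k = |\rho|$, together with Proposition~\ref{prop:property_factorial} and the transition identities \eqref{eq:trans_p_P} and \eqref{eq:def_X}. For (i), I would write $P^*_\nu = P_\nu + g_\nu$ with $\deg g_\nu < |\nu| = k$ by Proposition~\ref{prop:property_factorial}(i); then $\mf{p}_\rho = \sum_\nu X^\nu_\rho P_\nu + \sum_\nu X^\nu_\rho g_\nu$, where the first sum equals $p_\rho$ by \eqref{eq:trans_p_P} and the second is supersymmetric of degree $< |\rho|$. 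For (ii), recall that $\Psi$ is by construction a linear isomorphism of $\Gamma$ carrying the basis $(P_\lambda)_{\lambda \in \mcal{SP}}$ onto the basis $(P^*_\lambda)_{\lambda \in \mcal{SP}}$; since $(p_\rho)_{\rho \in \mcal{OP}}$ is a basis of $\Gamma$ by definition, its image $(\mf{p}_\rho)_{\rho \in \mcal{OP}}$ under $\Psi$ is again a basis.

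The substantial part is (iii). Evaluating at $\lambda$ gives $\mf{p}_\rho(\lambda) = \sum_{\nu \in \mcal{SP}_k} X^\nu_\rho P^*_\nu(\lambda)$. If $|\lambda| < k$, both sides of the claimed identity vanish: the left one because $P^*_\nu(\lambda) = 0$ whenever $|\nu| = k > |\lambda|$ by Proposition~\ref{prop:property_factorial}(iii), and the right one because $|\lambda|^{\downarrow |\rho|} = 0$. So I may assume $|\lambda| \ge k$, in which case Proposition~\ref{prop:property_factorial}(iv) gives $P^*_\nu(\lambda) = \frac{|\lambda|^{\downarrow k}}{g^\lambda} \langle p_1^{|\lambda| - k} P_\nu, Q_\lambda \rangle$. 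Pulling the common factor $|\lambda|^{\downarrow k}/g^\lambda$ out of the sum, using bilinearity of $\langle\,\cdot\,,\,\cdot\,\rangle$ and $\sum_\nu X^\nu_\rho P_\nu = p_\rho$, I obtain $\mf{p}_\rho(\lambda) = \frac{|\lambda|^{\downarrow |\rho|}}{g^\lambda} \langle p_1^{|\lambda| - |\rho|} p_\rho, Q_\lambda \rangle$. Finally $p_\rho = p_1^{m_1(\rho)} p_{\tilde\rho}$ and $|\rho| = |\tilde\rho| + m_1(\rho)$ turn $p_1^{|\lambda| - |\rho|} p_\rho$ into $p_1^{|\lambda| - |\tilde\rho|} p_{\tilde\rho} = p_{\tilde\rho \cup (1^{|\lambda| - |\tilde\rho|})}$, which is the first equality; the second is just the definition \eqref{eq:def_X}.

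For (iv), applying (iii) to $\tilde\rho$ (for which $\widetilde{\tilde\rho} = \tilde\rho$ and $m_1(\tilde\rho) = 0$) gives $\mf{p}_{\tilde\rho}(\lambda) = \frac{|\lambda|^{\downarrow |\tilde\rho|}}{g^\lambda} \langle p_1^{|\lambda| - |\tilde\rho|} p_{\tilde\rho}, Q_\lambda \rangle$; plugging the factorization $|\lambda|^{\downarrow |\rho|} = |\lambda|^{\downarrow |\tilde\rho|} (|\lambda| - |\tilde\rho|)^{\downarrow m_1(\rho)}$ into the formula from (iii) and comparing the two right-hand sides directly — without dividing, which sidesteps the case $\mf{p}_{\tilde\rho}(\lambda) = 0$ — yields (iv). Part (v) then follows at once: if $|\tilde\rho| > |\lambda|$ then $|\lambda|^{\downarrow |\tilde\rho|} = 0$, hence $\mf{p}_{\tilde\rho}(\lambda) = 0$, and so $\mf{p}_\rho(\lambda) = 0$ by (iv). I expect the only mildly delicate point to be the bookkeeping of the degenerate size ranges in (iii) — the case $|\lambda| < |\rho|$, and the interpretation of $p_1^{|\lambda| - |\tilde\rho|}$ when $|\lambda| < |\tilde\rho|$ — but since the relevant quantities vanish there this is routine; all the genuine content is imported from Proposition~\ref{prop:property_factorial}(iv).
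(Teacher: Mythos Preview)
Your proposal is correct and follows essentially the same approach as the paper: both read everything off the definition $\mf{p}_\rho = \sum_\nu X^\nu_\rho P^*_\nu$, plug in Proposition~\ref{prop:property_factorial}(iv) for (iii), and then deduce (iv) and (v) from the falling-factorial factorization $|\lambda|^{\downarrow |\rho|} = |\lambda|^{\downarrow |\tilde\rho|}(|\lambda|-|\tilde\rho|)^{\downarrow m_1(\rho)}$. The only cosmetic differences are that for (ii) you invoke $\Psi$ as a linear isomorphism whereas the paper argues via the triangularity from (i), and for (v) you go through (iv) while the paper quotes (iii) directly; neither changes the substance.
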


\begin{proof}
(i): It follows immediately from \eqref{eq:trans_p_P} and 
Proposition \ref{prop:property_factorial} (i).

(ii): It follows immediately from  Claim (i) and 
the fact that $(p_{\rho})_{\rho \in \mcal{OP}}$ form a linear basis
of $\Gamma$.

(iii): Set $k=|\rho|$.
From Proposition \ref{prop:property_factorial} (iv),  \eqref{eq:trans_p_P}, 
and \eqref{eq:def_X}, we have
\begin{align*}
\mf{p}_{\rho} (\lambda) =& \sum_{\mu \in \mcal{SP}_k} X^\mu_\rho P^*_\mu(\lambda)=
\sum_{\mu \in \mcal{SP}_k} X^\mu_\rho
|\lambda|^{\downarrow k} \frac{\langle 
p_1^{|\lambda|-k} P_\mu, Q_\lambda \rangle}{g^\lambda} \\
=& |\lambda|^{\downarrow k} \frac{\langle 
p_1^{|\lambda|-k} p_{\rho}, Q_\lambda \rangle}{g^\lambda}
= |\lambda|^{\downarrow k} \frac{
X^{\lambda}_{\rho \cup (1^{|\lambda|-k})}}{g^\lambda}.
\end{align*}
Note that $\rho \cup (1^{|\lambda|-k})
=\tilde{\rho} \cup(1^{|\lambda|- |\tilde
{\rho}|})$.

(iv): Since  
$|\lambda|^{\downarrow |\rho|} =
|\lambda|^{\downarrow |\tilde{\rho}|}
\cdot (|\lambda|-|\tilde{\rho}|)^{
\downarrow m_1(\rho)}$, 
Claim (iii) implies that
$$
\mf{p}_{\rho} (\lambda) =
|\lambda|^{\downarrow |\tilde{\rho}|}
\cdot (|\lambda|-|\tilde{\rho}|)^{
\downarrow m_1(\rho)}
\frac{
X^{\lambda}_{\tilde{\rho} \cup(1^{|\lambda|- |\tilde
{\rho}|})}}{g^\lambda}
= (|\lambda|-|\tilde{\rho}|)^{
\downarrow m_1(\rho)}
\mf{p}_{\tilde{\rho}} (\lambda).
$$

(v): If $|\lambda| < |\tilde{\rho}|$,
then
$
|\lambda|^{\downarrow |\tilde{\rho}|} =
|\lambda|(|\lambda|-1) \cdots
(|\lambda|-|\tilde{\rho}|+1)=0$,
and therefore we obtain
$\mf{p}_{\rho}(\lambda)=0$ 
from Claim (iii).
\end{proof}

Substituting
$\rho=(1^k)$ in Proposition \ref{prop:mp} (iv),
we obtain
$\mf{p}_{(1^k)}(\lambda) =|\lambda|^{\downarrow k}$.
Using Stirling numbers defined in \eqref{eq:stirling_2}, we find
$$
p_{(1^k)}= \sum_{j=1}^k T(k,j) \mf{p}_{(1^j)}.
$$
More generally,
a power-sum function $p_{\rho}$ can be expanded as a linear combination
of $\mf{p}_\sigma$ 
in the following way.
First, we expand $p_{\rho}$ in terms of $P_{\lambda}$ by using \eqref{eq:trans_p_P}.
Second, each $P_\lambda$ is expanded in terms of
factorial Schur $P$-functions $P_\mu^*$ by Proposition \ref{prop:PtoP*_Stirling}.
Finally, each $P_\mu^*$ is expanded in terms of $\mf{p}_{\sigma}$ by the formula
$$
P_{\mu}^*= \sum_{\sigma \in \mcal{OP}_{|\mu|}} 2^{-\ell(\mu)+\ell(\sigma)}
z_{\sigma}^{-1} X^{\mu}_\sigma \mf{p}_\sigma,
$$
which is the image of the second equation on \eqref{eq:trans_p_P}
under $\Psi$.

\begin{example} \label{example:p_to_mp}
\begin{align*}
p_{(1)}=& \mf{p}_{(1)}, \\
p_{(1^2)}=& \mf{p}_{(1^2)}+\mf{p}_{(1)}, \\
p_{(3)}=&\mf{p}_{(3)} +3 \mf{p}_{(1^2)} +\mf{p}_{(1)}, \\
p_{(1^3)}=&\mf{p}_{(1^3)} +3 \mf{p}_{(1^2)} +\mf{p}_{(1)} \\
p_{(3,1)}=& \mf{p}_{(3,1)} +3 \mf{p}_{(3)} +3 \mf{p}_{(1^3)} 
+7 \mf{p}_{(1^2)} +\mf{p}_{(1)}, \\
p_{(1^4)}=&\mf{p}_{(1^4)} +6 \mf{p}_{(1^3)} +7 \mf{p}_{(1^2)} +  \mf{p}_{(1)}, \\ 
p_{(5)}=& \mf{p}_{(5)} + 10 \mf{p}_{(3,1)} + \frac{35}{3} \mf{p}_{(3)} + \frac{40}{3} \mf{p}_{(1^3)}
+ 15 \mf{p}_{(1^2)} +\mf{p}_{(1)}, \\
p_{(3,1,1)}=& \mf{p}_{(3,1,1)} + 7 \mf{p}_{(3,1)} +3 \mf{p}_{(1^4)} + 
9 \mf{p}_{(3)} +16 \mf{p}_{(1^3)}
+15 \mf{p}_{(1^2)} +\mf{p}_{(1)}, \\
p_{(1^5)} =& \mf{p}_{(1^5)} +10 \mf{p}_{(1^4)} 
+25 \mf{p}_{(1^3)} + 15 \mf{p}_{(1^2)} +\mf{p}_{(1)}.
\end{align*}
\end{example}

\begin{remark}
For two ordinary partitions $\lambda, \mu$, we consider 
$$
\mathrm{Ch}_\mu (\lambda)=
\begin{cases}
|\lambda|^{\downarrow |\mu|} \frac{\chi^\lambda_{\mu \cup (1^{|\lambda|-|\mu|})}}
{f^\lambda} &  \text{if $|\lambda| \ge |\mu|$}, \\
0 & \text{if $|\lambda|<|\mu|$},
\end{cases}
$$
where $\chi^\lambda_{\mu \cup (1^{|\lambda|-|\mu|})}$
is the value
of the irreducible character $\chi^\lambda$
of the symmetric group $S_{|\lambda|}$
at conjugacy class associated with $\mu \cup (1^{|\lambda|-|\mu|})$.
The functions $\mathrm{Ch}_\mu$ on the set of all partitions are called
the normalized characters of  symmetric groups, and 
have rich properties and applications.
See \cite{Feray2010, IO2002, Stanley2006}.
Note that the function is written as $p_\mu^{\#}$ in \cite{IO2002}.
Our function $\mf{p}_\rho$ is
a {\it projective} analog of $\mathrm{Ch}_\mu$
since 
$X^\lambda_\rho$ is a character value
for a projective representation of symmetric groups.
\end{remark}

\section{Shifted Plancherel averages} \label{sec:ShPlAv_Proof}

\subsection{Proof of Polynomiality}

In the present section
we give a proof of 
Theorems  \ref{thm:ShPlPoly} and  \ref{thm:ShPlPoly_mu}.
Let $m$ be a nonnegative integer.
Fix $\mu \in \mcal{SP}_m$. For each $\rho \in \mcal{OP}$,
we consider the summation
$$
\bE_{\mu,n}[\mf{p}_{\rho}] = \sum_{\lambda \in \mcal{SP}_{n+m}}
\frac{m!}{(n+m)!} 2^{n-\ell(\lambda) +\ell(\mu)}
\frac{g^\lambda}{g^\mu} g^{\lambda/\mu} \mf{p}_{\rho} (\lambda).
$$
Since Proposition \ref{prop:mp} (iv) implies that
\begin{equation} \label{eq:mu_average_M_1}
\bE_{\mu,n} [\mf{p}_{\rho}] = (n+m-|\tilde{\rho}|)^{ \downarrow m_1(\rho)}
\bE_{\mu,n} [\mf{p}_{\tilde{\rho}}],
\end{equation}
it is sufficient to compute 
$\bE_{\mu,n}[\mf{p}_{\rho}]$
for odd partitions $\rho$ with no part equal to $1$.

The following lemma is seen in 
\cite[III.8, Example 11]{Macdonald}.

\begin{lemma} \label{lem:p1dual}
For $f, g \in \Gamma$, 
$$
\langle p_1 f, g \rangle = \frac{1}{2} 
\left\langle f, \frac{\partial}{\partial p_1} g \right\rangle.
$$
Here the differential operator 
$\frac{\partial}{\partial p_1}$ acts on functions in $\Gamma$ 
expressed as polynomials in $p_1,p_3,p_5,\dots$.
\end{lemma}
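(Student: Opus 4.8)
The identity to be shown is $\langle p_1 f, g\rangle = \tfrac12 \langle f, \tfrac{\partial}{\partial p_1} g\rangle$ for all $f,g\in\Gamma$. The plan is to use bilinearity in both arguments and reduce to the case where $f = p_\rho$ and $g = p_\sigma$ are power-sum basis elements indexed by odd partitions $\rho,\sigma\in\mcal{OP}$, since the $(p_\rho)_{\rho\in\mcal{OP}}$ form a linear basis of $\Gamma$ and both sides of the asserted identity are bilinear. So it suffices to verify
$$
\langle p_1 p_\rho, p_\sigma\rangle = \frac12\left\langle p_\rho, \frac{\partial}{\partial p_1} p_\sigma\right\rangle
$$
for all $\rho,\sigma\in\mcal{OP}$.

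First I would compute the left-hand side. Since $1$ is odd, $p_1 p_\rho = p_{\rho\cup(1)}$ is again a power-sum indexed by the odd partition $\rho\cup(1)$, so by the orthogonality relation \eqref{eq:scalar_p} we get $\langle p_1 p_\rho, p_\sigma\rangle = 2^{-\ell(\rho\cup(1))} z_{\rho\cup(1)}\,\delta_{\rho\cup(1),\sigma}$. Thus the left side is nonzero precisely when $\sigma$ is obtained from $\rho$ by adjoining one extra part equal to $1$, i.e. $\rho = \tilde\sigma\cup(1^{m_1(\sigma)-1})$, equivalently $m_1(\sigma)\ge 1$ and $\rho\cup(1)=\sigma$; in that case its value is $2^{-\ell(\rho)-1} z_{\rho\cup(1)}$, and using $\ell(\rho\cup(1)) = \ell(\rho)+1$ together with $z_{\rho\cup(1)} = m_1(\sigma)\, z_\rho$ (the factor $1^{m_1} m_1!$ changes to $1^{m_1} m_1!$ with $m_1 = m_1(\rho)+1 = m_1(\sigma)$, so $z_{\rho\cup(1)}/z_\rho = m_1(\sigma)$) I can write this explicitly.

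Next I would compute the right-hand side. Writing $\sigma = (1^{a}, \tilde\sigma)$ with $a = m_1(\sigma)$, we have $p_\sigma = p_1^{a}\, p_{\tilde\sigma}$ as a polynomial in the generators $p_1,p_3,p_5,\dots$, so $\frac{\partial}{\partial p_1} p_\sigma = a\, p_1^{a-1} p_{\tilde\sigma} = m_1(\sigma)\, p_{\rho'}$ where $\rho' = (1^{a-1},\tilde\sigma)$. Then $\langle p_\rho, \frac{\partial}{\partial p_1} p_\sigma\rangle = m_1(\sigma)\,\langle p_\rho, p_{\rho'}\rangle = m_1(\sigma)\, 2^{-\ell(\rho')} z_{\rho'}\,\delta_{\rho,\rho'}$, which is nonzero exactly when $\rho = \rho' = (1^{a-1},\tilde\sigma)$, i.e. exactly the same condition as for the left-hand side. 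Comparing the two nonzero values: the left side is $2^{-\ell(\rho)-1} m_1(\sigma) z_\rho$ and the right side is $\tfrac12 m_1(\sigma)\, 2^{-\ell(\rho)} z_\rho$ (since $\rho' = \rho$), and these agree. When $a = m_1(\sigma) = 0$ both sides vanish. This completes the verification on the basis and hence in general.

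I do not expect any serious obstacle here; the only point requiring mild care is bookkeeping of the combinatorial factors $z_\rho$ and $\ell(\rho)$ under the operation $\rho\mapsto\rho\cup(1)$, and making sure the support conditions (the two Kronecker deltas) on the two sides match exactly — which they do, both amounting to "$\sigma$ has at least one part equal to $1$ and $\rho$ is $\sigma$ with one such part removed." An alternative, essentially equivalent route is to note that multiplication by $p_1$ and $\tfrac12\frac{\partial}{\partial p_1}$ are adjoint as creation/annihilation operators on the bosonic Fock space $\Gamma$ with inner product \eqref{eq:scalar_p}, which is a standard fact; but the direct check on the $p_\rho$ basis is self-contained and short.
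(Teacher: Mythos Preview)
Your argument is correct: reducing by bilinearity to the basis $(p_\rho)_{\rho\in\mcal{OP}}$ and checking the single identity $\langle p_{\rho\cup(1)},p_\sigma\rangle=\tfrac12\langle p_\rho,\tfrac{\partial}{\partial p_1}p_\sigma\rangle$ via \eqref{eq:scalar_p} works exactly as you describe, and your bookkeeping of $z_{\rho\cup(1)}=m_1(\sigma)\,z_\rho$ and $\ell(\rho\cup(1))=\ell(\rho)+1$ is right.

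The paper does not actually prove this lemma; it simply cites \cite[III.8, Example~11]{Macdonald}. So your proposal supplies a self-contained proof where the paper defers to the literature. Macdonald's treatment is essentially the same observation (that multiplication by $p_r$ is adjoint to $\tfrac{r}{2}\tfrac{\partial}{\partial p_r}$ for this inner product), so your direct check on the $p_\rho$ basis is in the same spirit, just made explicit here rather than quoted.
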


\begin{thm} \label{thm:mu_average_M}
Let $\rho$ be an odd partition such that $m_1(\rho)=0$.
Then 
\begin{equation} \label{eq: mu_average_M_M}
\bE_{\mu,n}[\mf{p}_{\rho}]= \mf{p}_{\rho}(\mu).
\end{equation}
\end{thm}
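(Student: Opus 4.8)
The plan is to unwind the definition of $\bE_{\mu,n}[\mf{p}_\rho]$ using the formula from Proposition~\ref{prop:mp}(iii) and translate the whole sum into an inner product in $\Gamma$. Writing $N=n+m$ and using $\mf{p}_\rho(\lambda)=|\lambda|^{\downarrow|\rho|}\langle p_1^{N-|\rho|}p_{\tilde\rho},Q_\lambda\rangle/g^\lambda$ (valid since $m_1(\rho)=0$ means $\tilde\rho=\rho$), the factor $g^\lambda$ in the denominator cancels the $g^\lambda$ appearing in $\mathbb{P}_{\mu,n}(\lambda)$, and $|\lambda|^{\downarrow|\rho|}=N^{\downarrow|\rho|}$ is a constant that comes out of the sum. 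What remains is
\begin{equation*}
\bE_{\mu,n}[\mf{p}_\rho]=\frac{m!\,2^{n+\ell(\mu)}}{(n+m)!\,g^\mu}\,N^{\downarrow|\rho|}
\sum_{\lambda\in\mcal{SP}_{N}}2^{-\ell(\lambda)}\,g^{\lambda/\mu}\,\langle p_1^{N-|\rho|}p_\rho,Q_\lambda\rangle.
\end{equation*}
Next I would rewrite $g^{\lambda/\mu}=\langle p_1^{N-m}P_\mu,Q_\lambda\rangle$ by Proposition~\ref{prop:scalar_product}(iv), and then collapse the sum over $\lambda$ using the orthogonality/completeness relation Proposition~\ref{prop:scalar_product}(iii), namely $\sum_{\lambda\in\mcal{SP}_N}2^{-\ell(\lambda)}\langle f,Q_\lambda\rangle\langle h,Q_\lambda\rangle=\langle f,h\rangle$, with $f=p_1^{N-m}P_\mu$ and $h=p_1^{N-|\rho|}p_\rho$. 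This gives $\bE_{\mu,n}[\mf{p}_\rho]=\frac{m!\,2^{n+\ell(\mu)}}{(n+m)!\,g^\mu}N^{\downarrow|\rho|}\langle p_1^{N-m}P_\mu,\ p_1^{N-|\rho|}p_\rho\rangle$.

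The heart of the argument is then to evaluate this single inner product, and the key tool is Lemma~\ref{lem:p1dual}: each factor of $p_1$ on the left is adjoint to $\tfrac12\partial/\partial p_1$ acting on the right. Moving all $N-m$ copies of $p_1$ across, I get $\langle P_\mu,\ 2^{-(N-m)}(\partial/\partial p_1)^{N-m}(p_1^{N-|\rho|}p_\rho)\rangle$. Since $\tilde\rho=\rho$ has no part equal to $1$, the variable $p_1$ appears in $p_1^{N-|\rho|}p_\rho$ only through the explicit power $p_1^{N-|\rho|}$; applying $\partial/\partial p_1$ exactly $N-m$ times to $p_1^{N-|\rho|}$ produces the factorial coefficient $(N-|\rho|)^{\downarrow(N-m)} = (N-|\rho|)(N-|\rho|-1)\cdots(m-|\rho|+1)$ times $p_1^{\,m-|\rho|}p_\rho$, so the inner product becomes $2^{-(N-m)}(N-|\rho|)^{\downarrow(N-m)}\langle P_\mu,\ p_1^{m-|\rho|}p_\rho\rangle = 2^{-(N-m)}(N-|\rho|)^{\downarrow(N-m)}\langle p_1^{m-|\rho|}P_\mu,\ p_\rho\rangle$ by one more application of Lemma~\ref{lem:p1dual} reversed — or, more simply, just recognizing $\langle p_1^{m-|\rho|}P_\mu,Q_\lambda\rangle$-type expansions. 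At this point the inner product $\langle p_1^{m-|\rho|}P_\mu,\ p_\rho\rangle$ is exactly the $m$-variable analog of the quantity controlling $\mf{p}_\rho(\mu)$: indeed, by Proposition~\ref{prop:mp}(iii) applied to the strict partition $\mu$ of size $m$, $\mf{p}_\rho(\mu)=m^{\downarrow|\rho|}\langle p_1^{m-|\rho|}p_\rho,Q_\mu\rangle/g^\mu$, and $\langle p_1^{m-|\rho|}P_\mu,p_\rho\rangle=\langle p_1^{m-|\rho|}p_\rho,P_\mu\rangle$; to relate $P_\mu$ and $Q_\mu=2^{\ell(\mu)}P_\mu$ I pick up a factor $2^{-\ell(\mu)}$.

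Finally I would collect all the numerical factors. One should find
\begin{equation*}
N^{\downarrow|\rho|}\cdot 2^{-(N-m)}(N-|\rho|)^{\downarrow(N-m)}= N^{\downarrow m}\cdot m^{\downarrow|\rho|}\cdot 2^{-(N-m)}=\frac{(n+m)!}{n!}\cdot m^{\downarrow|\rho|}\cdot 2^{-n},
\end{equation*}
using the telescoping identity $N^{\downarrow|\rho|}(N-|\rho|)^{\downarrow(N-m)}=N^{\downarrow N-m+|\rho|}=N^{\downarrow m}\cdot m^{\downarrow|\rho|}$ (valid since $|\rho|\le m$, and both sides vanish if $|\rho|>m$), together with $n!=N^{\downarrow n}$ and $N^{\downarrow N} = N!$ — more carefully, $N^{\downarrow m}=(n+m)!/n!$. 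Substituting everything back, the factors $2^{n+\ell(\mu)}$, $2^{-n}$, $2^{-\ell(\mu)}$ cancel, the $(n+m)!$ cancels, the $m!$ combines with $m^{\downarrow|\rho|}$ and $g^\mu$ to reconstruct $\mf{p}_\rho(\mu)$, and we are left with $\bE_{\mu,n}[\mf{p}_\rho]=\mf{p}_\rho(\mu)$. The main obstacle I anticipate is purely bookkeeping: keeping the powers of $2$, the descending factorials, and the factorials $m!,(n+m)!,n!$ all straight, and handling the degenerate cases $|\rho|>m$ (where the claimed value $\mf{p}_\rho(\mu)=0$ by Proposition~\ref{prop:mp}(v)) and $N-|\rho|<N-m$; there is no conceptual difficulty once the sum is expressed as the single inner product via Propositions~\ref{prop:scalar_product}(iii),(iv) and Lemma~\ref{lem:p1dual}.
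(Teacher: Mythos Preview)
Your approach is essentially identical to the paper's: express $\mf{p}_\rho(\lambda)$ via Proposition~\ref{prop:mp}(iii), rewrite $g^{\lambda/\mu}$ via Proposition~\ref{prop:scalar_product}(iv), collapse the $\lambda$-sum into a single scalar product via Proposition~\ref{prop:scalar_product}(iii), then strip the $p_1$-powers with Lemma~\ref{lem:p1dual} and recognize $\mf{p}_\rho(\mu)$. The paper treats the degenerate case $|\rho|>m$ by expanding $Q_\mu$ in the $p_\sigma$ and observing all terms vanish; your observation that $(N-|\rho|)^{\downarrow(N-m)}=0$ in that range is a clean shortcut.

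Two slips to fix. First, the step ``$\langle P_\mu,\,p_1^{m-|\rho|}p_\rho\rangle=\langle p_1^{m-|\rho|}P_\mu,\,p_\rho\rangle$ by Lemma~\ref{lem:p1dual} reversed'' is wrong: multiplication by $p_1$ is \emph{not} self-adjoint here (its adjoint is $\tfrac12\,\partial/\partial p_1$). What you actually need is just the symmetry of the scalar product, $\langle P_\mu,\,p_1^{m-|\rho|}p_\rho\rangle=\langle p_1^{m-|\rho|}p_\rho,\,P_\mu\rangle=2^{-\ell(\mu)}\langle p_1^{m-|\rho|}p_\rho,\,Q_\mu\rangle$, which feeds directly into Proposition~\ref{prop:mp}(iii). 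Second, your telescoping is off by the swap $m\leftrightarrow n$: one has
\[
N^{\downarrow|\rho|}(N-|\rho|)^{\downarrow(N-m)}=N^{\downarrow(N-m+|\rho|)}=N^{\downarrow(N-m)}\cdot m^{\downarrow|\rho|}=N^{\downarrow n}\cdot m^{\downarrow|\rho|}=\frac{(n+m)!}{m!}\,m^{\downarrow|\rho|},
\]
not $N^{\downarrow m}=(n+m)!/n!$. With your version the final coefficient comes out as $m!/n!$ rather than $1$. Once these are corrected, the argument matches the paper's exactly.
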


\begin{proof}
First of all,
if $n+m  < |\rho|$, then we have
$\mf{p}_{\rho}(\lambda)=0$ for all
$\lambda \in \mcal{SP}_{n+m}$ and 
$\mf{p}_{\rho}(\mu)=0$ by virtue of Proposition \ref{prop:mp} (v), so that
$\bE_{\mu,n}[\mf{p}_\rho]=0 =\mf{p}_\rho(\mu)$.
Consequently, we may assume $|\rho| \le n+m$.

Using Proposition \ref{prop:scalar_product} (iv) and 
Proposition \ref{prop:mp} (iii), 
we see that
\begin{align}
 \bE_{\mu,n}[\mf{p}_{\rho}] 
=& \sum_{\lambda \in \mcal{SP}_{n+m}} \frac{m!}{ (n+m)!} 
2^{n-\ell(\lambda)+\ell(\mu)} \frac{g^\lambda}{g^\mu} 
\langle p_1^n P_\mu, Q_\lambda \rangle \notag \\
& \qquad \qquad \times  (n+m)^{\downarrow |\rho|} 
\frac{\langle p_1^{n+m-|\rho|} p_{\rho}, Q_\lambda \rangle}
{g^\lambda}
 \notag \\
=& 
\frac{m!}{g^\mu} \frac{(n+m)^{\downarrow |\rho|}}{(n+m)!}  \sum_{\lambda \in \mcal{SP}_{n+m}}
2^{n-\ell(\lambda)}\langle p_1^n Q_\mu, Q_\lambda \rangle
\langle p_1^{n+m-|\rho|} p_{\rho}, Q_{\lambda} \rangle  \notag \\
=& \frac{m!}{g^\mu}  \frac{(n+m)^{\downarrow |\rho|}}{(n+m)!} 
2^{n} \langle p_1^n Q_\mu, p_1^{n+m-|\rho|} p_{\rho} \rangle,
\label{eq:E_mu_n_pp}
\end{align}
where we have used Proposition \ref{prop:scalar_product} (iii)
for the last equality.

We next compute the scalar product 
$\langle p_1^n Q_\mu, p_1^{n+m-|\rho|} p_{\rho} \rangle$.
Assume that $|\rho| >m$. 
Expanding $Q_\mu$ in $p_{\sigma}$ (see \eqref{eq:trans_p_P}),  we have 
\begin{align*}
\langle p_1^n Q_\mu, p_1^{n+m-|\rho|} p_{\rho} \rangle
=& \sum_{\sigma \in \mcal{OP}_m} 2^{\ell(\sigma)} z_{\sigma}^{-1} X^\mu_{\sigma}
\langle p_{1}^{n+m_1(\sigma)} p_{\tilde{\sigma}}, p_1^{n-(|\rho|-m)} 
p_{\rho} \rangle.
\end{align*}
Here all the scalar products of the right hand side vanish 
by \eqref{eq:scalar_p}
because $m_1(\rho)=0$ and $n+m_1(\sigma) \ge n > n-(|\rho|-m)$.
Therefore it follows from \eqref{eq:E_mu_n_pp} that
$$
\bE_{\mu,n} [\mf{p}_{\rho}]=0 \qquad \text{if $|\rho| >m$}.
$$
On the other hand, $\mf{p}_{\rho}(\mu)=0$ if $|\rho|> m$ by Proposition \ref{prop:mp} (v), hence $\bE_{\mu,n}[\mf{p}_{\rho}]= 0= \mf{p}_\rho(\mu)$ in 
that case.

We finally assume that $|\rho| \le m$.
Using Lemma \ref{lem:p1dual} we have
\begin{align*}
\langle p_1^n Q_\mu, p_1^{n+m-|\rho|} p_{\rho} \rangle =&
\frac{1}{2^n} \left\langle Q_\mu, \(\frac{\partial}
{\partial p_1}\)^n p_1^{n+m-|\rho|} p_{\rho} 
\right\rangle \\
=& \frac{(n+m-|\rho|)^{\downarrow n}}{2^n} 
\langle Q_{\mu}, p_1^{m-|\rho|} p_{\rho} \rangle \\
=& \frac{(n+m-|\rho|)^{\downarrow n}}{2^n} 
\frac{g^\mu}{m^{\downarrow |\rho|}} \mf{p}_{\rho} (\mu),
\end{align*}
where in the last equality Proposition \ref{prop:mp} (iii) is applied.
Combining this with \eqref{eq:E_mu_n_pp} gives
\begin{align*}
\bE_{\mu,n}[\mf{p}_{\rho}] =&  
\frac{(n+m)^{\downarrow |\rho|}  \, (n+m-|\rho|)^{\downarrow n} m!}
{(n+m)! \, m^{\downarrow |\rho|}}
\mf{p}_\rho(\mu).
\end{align*}
A straightforward computation gives the desired expression.
\end{proof}

Substituting $\rho=\emptyset$ in Theorem \ref{thm:mu_average_M},
we obtain
$$
\bE_{\mu,n} [1] = 1,
$$
which shows that 
 $\mathbb{P}_{\mu,n}$ 
 defined in \eqref{eq:def_Plan_mu} is a probability measure on $\mcal{SP}_{n+m}$
 and that $\mathbb{E}_{\mu,n}$ is the average with respect to $\mathbb{P}_{\mu,n}$.

\begin{corollary} \label{cor:Av_p_rho}
For $\rho \in \mcal{OP}_k$, 
$$
\bE_n [\mf{p}_\rho ] = \delta_{\rho, (1^k)} n^{\downarrow k}.
$$
\end{corollary}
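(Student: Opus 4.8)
The plan is to obtain Corollary~\ref{cor:Av_p_rho} as the $\mu=\emptyset$ specialization of Theorem~\ref{thm:mu_average_M}, combined with the reduction \eqref{eq:mu_average_M_1}. Since $\bE_n=\bE_{\emptyset,n}$ and $|\emptyset|=0$, the first step is to rewrite \eqref{eq:mu_average_M_1} as
\[
\bE_n[\mf{p}_\rho] = (n-|\tilde{\rho}|)^{\downarrow m_1(\rho)}\,\bE_n[\mf{p}_{\tilde{\rho}}].
\]
This reduces everything to the averages $\bE_n[\mf{p}_{\tilde{\rho}}]$ for odd partitions $\tilde{\rho}$ with no part equal to $1$, to which Theorem~\ref{thm:mu_average_M} (with $\mu=\emptyset$) applies and yields $\bE_n[\mf{p}_{\tilde{\rho}}] = \mf{p}_{\tilde{\rho}}(\emptyset)$.

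Next I would evaluate $\mf{p}_{\tilde{\rho}}(\emptyset)$. When $\tilde{\rho}\neq\emptyset$ we have $|\tilde{\rho}|>0=|\emptyset|$, so Proposition~\ref{prop:mp}(v), applied with the odd partition $\tilde{\rho}$ in place of $\rho$ (erasing parts equal to $1$ from $\tilde{\rho}$ changes nothing), gives $\mf{p}_{\tilde{\rho}}(\emptyset)=0$. When $\tilde{\rho}=\emptyset$, the function $\mf{p}_{\emptyset}=\Psi(p_\emptyset)=\Psi(1)$ is the constant $1$ (equivalently, by Proposition~\ref{prop:mp}(i) it equals $p_\emptyset=1$ plus lower-degree terms, of which there are none), so $\mf{p}_{\emptyset}(\emptyset)=1$.

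Finally I would assemble the cases. If $\rho=(1^k)$ then $\tilde{\rho}=\emptyset$, $m_1(\rho)=k$ and $|\tilde{\rho}|=0$, so the displayed identity gives $\bE_n[\mf{p}_{(1^k)}]=n^{\downarrow k}\cdot\mf{p}_{\emptyset}(\emptyset)=n^{\downarrow k}$; if $\rho\neq(1^k)$ then $\tilde{\rho}\neq\emptyset$ and the identity gives $\bE_n[\mf{p}_\rho]=(n-|\tilde{\rho}|)^{\downarrow m_1(\rho)}\cdot 0=0$. Together these are exactly $\bE_n[\mf{p}_\rho]=\delta_{\rho,(1^k)}\,n^{\downarrow k}$. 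Every step cites an already-established statement, so there is no genuine obstacle; the only place demanding a little care is the bookkeeping around the degenerate cases, in particular the identification $\mf{p}_{\emptyset}(\emptyset)=1$ and the consistency with $\bE_n[1]=1$ recorded just after Theorem~\ref{thm:mu_average_M}.
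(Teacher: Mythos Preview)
Your proof is correct and follows essentially the same route as the paper: specialize \eqref{eq:mu_average_M_1} at $\mu=\emptyset$, apply Theorem~\ref{thm:mu_average_M} to obtain $\bE_n[\mf{p}_{\tilde{\rho}}]=\mf{p}_{\tilde{\rho}}(\emptyset)=\delta_{\tilde{\rho},\emptyset}$, and read off the two cases. The only cosmetic difference is that the paper writes the prefactor as $n^{\downarrow m_1(\rho)}$ rather than your $(n-|\tilde{\rho}|)^{\downarrow m_1(\rho)}$, which is harmless since the two agree whenever $\bE_n[\mf{p}_{\tilde{\rho}}]\neq 0$.
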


\begin{proof}
Notice that 
$\bE_n[\mf{p}_{\rho}]= n^{\downarrow m_1(\rho)} \bE_n[ \mf{p}_{\tilde{\rho}}]$
by \eqref{eq:mu_average_M_1}.
Substituting $\mu=\emptyset$ in Theorem \ref{thm:mu_average_M}
implies that $\bE_n[\mf{p}_{\tilde{\rho}}]= \delta_{\tilde{\rho},\emptyset}$.
Therefore $\bE_n[\mf{p}_{\rho}]$ survives 
only if $\rho=(1^{k})$, and 
$\bE_n[\mf{p}_{(1^{k})}] =n^{\downarrow k}$.
\end{proof}

Now the polynomiality of
$\bE_{\mu,n}[f]$ becomes trivial.

\begin{proof}[Proof of Theorems \ref{thm:ShPlPoly} and \ref{thm:ShPlPoly_mu}]
Since the $\mf{p}_{\rho}$, $\rho \in \mcal{OP}$, form a linear basis of the algebra
$\Gamma$ of supersymmetric functions, 
we obtain Theorem \ref{thm:ShPlPoly_mu} from Theorem \ref{thm:mu_average_M}.
Theorem \ref{thm:ShPlPoly}
is a special case of Theorem \ref{thm:ShPlPoly_mu} 
with $\mu=\emptyset$.
\end{proof}

\subsection{Orthogonality for  $\mf{p}_\rho$}

We can also easily compute the $\mathbb{P}_n$-average of 
products $\mf{p}_{\rho} \mf{p}_\sigma$.

\begin{thm} \label{thm:ortho_pp}
Let $\rho$ and $\sigma$ be odd partitions 
such that $m_1(\rho)=m_1(\sigma)=0$. Then 
$$
\bE_n [\mf{p}_{\rho} \mf{p}_{\sigma}] = \delta_{\rho, \sigma}
 2^{|\rho|-\ell(\rho)} z_{\rho}
 n^{\downarrow |\rho|}.
$$
\end{thm}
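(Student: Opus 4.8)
The plan is to compute $\bE_n[\mf{p}_\rho\mf{p}_\sigma]$ directly, collapsing it to a single scalar product in $\Gamma$ by means of the orthogonality relations already in hand. Since $\mf{p}_\rho\mf{p}_\sigma$ is again a supersymmetric function, Theorem \ref{thm:ShPlPoly} guarantees that the left-hand side is a polynomial in $n$, while the right-hand side is manifestly one; so it suffices to prove the identity for all sufficiently large $n$, say $n\ge|\rho|+|\sigma|$, and it then holds identically. Fix such an $n$ and take $\lambda\in\mcal{SP}_n$, so that $|\lambda|=n$. Because $m_1(\rho)=m_1(\sigma)=0$ we have $\tilde\rho=\rho$ and $\tilde\sigma=\sigma$, so Proposition \ref{prop:mp}(iii) gives $\mf{p}_\rho(\lambda)=n^{\downarrow|\rho|}\,\langle p_1^{n-|\rho|}p_\rho,Q_\lambda\rangle/g^\lambda$ and likewise for $\sigma$. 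Substituting these into $\bE_n[\mf{p}_\rho\mf{p}_\sigma]=\sum_{\lambda\in\mcal{SP}_n}\frac{2^{n-\ell(\lambda)}(g^\lambda)^2}{n!}\mf{p}_\rho(\lambda)\mf{p}_\sigma(\lambda)$, the two factors $g^\lambda$ cancel the $(g^\lambda)^2$, and Proposition \ref{prop:scalar_product}(iii) evaluates the sum over $\lambda$:
\[
\bE_n[\mf{p}_\rho\mf{p}_\sigma]
=\frac{n^{\downarrow|\rho|}\,n^{\downarrow|\sigma|}\,2^{n}}{n!}
\sum_{\lambda\in\mcal{SP}_n}2^{-\ell(\lambda)}\langle p_1^{n-|\rho|}p_\rho,Q_\lambda\rangle\langle p_1^{n-|\sigma|}p_\sigma,Q_\lambda\rangle
=\frac{n^{\downarrow|\rho|}\,n^{\downarrow|\sigma|}\,2^{n}}{n!}\,\langle p_1^{n-|\rho|}p_\rho,\;p_1^{n-|\sigma|}p_\sigma\rangle .
\]

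Next I would evaluate $\langle p_1^{n-|\rho|}p_\rho,\,p_1^{n-|\sigma|}p_\sigma\rangle$ by applying Lemma \ref{lem:p1dual} a total of $n-|\rho|$ times, rewriting it as $2^{-(n-|\rho|)}\,\langle p_\rho,(\partial/\partial p_1)^{n-|\rho|}(p_1^{n-|\sigma|}p_\sigma)\rangle$. Since $m_1(\sigma)=0$, the element $p_\sigma$ is a polynomial in $p_3,p_5,\dots$ only, so $\partial/\partial p_1$ acts solely on the prefactor $p_1^{n-|\sigma|}$; hence $(\partial/\partial p_1)^{n-|\rho|}(p_1^{n-|\sigma|}p_\sigma)$ equals $\frac{(n-|\sigma|)!}{(|\rho|-|\sigma|)!}\,p_1^{|\rho|-|\sigma|}p_\sigma$ when $|\rho|\ge|\sigma|$ and vanishes otherwise. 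As $\bE_n[\mf{p}_\rho\mf{p}_\sigma]$ is symmetric in $\rho$ and $\sigma$, I may assume $|\rho|\le|\sigma|$; then the scalar product is $0$ unless $|\rho|=|\sigma|=:k$, in which case it equals $2^{-(n-k)}(n-k)!\,\langle p_\rho,p_\sigma\rangle=2^{-(n-k)}(n-k)!\,\delta_{\rho\sigma}\,2^{-\ell(\rho)}z_\rho$ by \eqref{eq:scalar_p}. In particular $\bE_n[\mf{p}_\rho\mf{p}_\sigma]=0$ whenever $|\rho|\ne|\sigma|$, which matches $\delta_{\rho\sigma}=0$ on the right.

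Finally, in the remaining case $|\rho|=|\sigma|=k$ (where $\delta_{\rho\sigma}$ forces $\rho=\sigma$ and $k=|\rho|$), substituting back and using $n^{\downarrow k}=n!/(n-k)!$ gives
\[
\bE_n[\mf{p}_\rho\mf{p}_\sigma]
=\frac{(n^{\downarrow k})^{2}\,2^{n}}{n!}\cdot\frac{(n-k)!}{2^{n-k}}\,\delta_{\rho\sigma}\,2^{-\ell(\rho)}z_\rho
=2^{k}\,\frac{(n^{\downarrow k})^{2}(n-k)!}{n!}\,\delta_{\rho\sigma}\,2^{-\ell(\rho)}z_\rho
=\delta_{\rho\sigma}\,2^{|\rho|-\ell(\rho)}z_\rho\,n^{\downarrow|\rho|},
\]
which is the asserted formula for all large $n$, hence for all $n$ by polynomiality. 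I do not anticipate a genuine obstacle: the argument is a short chain of the orthogonality identities of Sections \ref{section:supersymmetric_functions} and \ref{sec:NewSupersymmetricFunctions}, and the only points requiring care are the descending-factorial bookkeeping and verifying that, when $|\rho|\ne|\sigma|$, the number of applications of $\partial/\partial p_1$ genuinely exceeds the exponent of $p_1$ so that the scalar product vanishes. (An alternative would be to expand $\mf{p}_\rho\mf{p}_\sigma$ in the basis $(\mf{p}_\tau)_{\tau\in\mcal{OP}}$ and invoke Corollary \ref{cor:Av_p_rho}, but pinning down the relevant structure constant seems no easier than the computation above.)
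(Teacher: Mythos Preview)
Your proof is correct and follows essentially the same route as the paper's. Both substitute Proposition~\ref{prop:mp}(iii), cancel the factors $g^\lambda$, and reduce the sum over $\lambda$ to a single scalar product; the paper does this by invoking Proposition~\ref{prop:OrthoXX} (which is itself Proposition~\ref{prop:scalar_product}(iii) combined with \eqref{eq:scalar_p}), whereas you invoke Proposition~\ref{prop:scalar_product}(iii) directly and then compute $\langle p_1^{n-|\rho|}p_\rho,\,p_1^{n-|\sigma|}p_\sigma\rangle$ via Lemma~\ref{lem:p1dual}. That detour through the differential operator is unnecessary: since $m_1(\rho)=m_1(\sigma)=0$ one has $p_1^{n-|\rho|}p_\rho=p_{\rho\cup(1^{n-|\rho|})}$ and can apply \eqref{eq:scalar_p} at once, which is exactly what the paper does. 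Your polynomiality reduction to large $n$ is fine but also avoidable, since the paper simply assumes $n\ge\max\{|\rho|,|\sigma|\}$ and notes via Proposition~\ref{prop:mp}(v) that otherwise both sides vanish.
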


\begin{proof}
We may suppose $n \ge \max \{|\rho|, |\sigma| \}$
by virtue of Proposition \ref{prop:mp} (v).
It follows from Proposition \ref{prop:mp} (iii) that 
\begin{align*}
\bE_n [\mf{p}_{\rho} \mf{p}_{\sigma}] =&
\sum_{\lambda \in \mcal{SP}_n} \frac{2^{n-\ell(\lambda)} (g^\lambda)^2}{n!} \cdot 
\frac{n^{\downarrow |\rho|}}{g^\lambda} X^\lambda_{\rho
\cup (1^{n-|\rho|})} \cdot
\frac{n^{\downarrow |\sigma|}}{g^\lambda} X^\lambda_{\sigma
\cup (1^{n-|\sigma|})} \\
=& \frac{ n^{\downarrow |\rho|} n^{\downarrow |\sigma|}}{n!}
 \sum_{\lambda \in \mcal{SP}_n} 2^{n-\ell(\lambda)} 
X^\lambda_{\rho \cup (1^{n-|\rho|})}
X^\lambda_{\sigma\cup (1^{n-|\sigma|})}.  
\end{align*}
Using Proposition \ref{prop:OrthoXX} and 
the fact that $m_1(\rho)=m_1(\sigma)=0$, it equals 
$$
= \frac{ n^{\downarrow |\rho|} n^{\downarrow |\sigma|}}{n!}
2^{n-(\ell(\rho) +n-|\rho|)} z_{\rho \cup  
 (1^{n-|\rho|})} \delta_{\rho, \sigma}
=  n^{\downarrow |\rho|} 2^{|\rho| -\ell(\rho)}
z_{\rho} \delta_{\rho, \sigma}.
$$
\end{proof}

Substituting $\sigma=\emptyset$ in Theorem \ref{thm:ortho_pp} recovers 
Corollary \ref{cor:Av_p_rho}.

\subsection{Bound for degrees} \label{Bound_degree}

The following proposition is a direct consequence of 
Corollary \ref{cor:Av_p_rho}.

\begin{prop} 
Let $f$ be a supersymmetric function. If we expand $f$
as a linear combination of $\mf{p}_{\rho}$:
$$
f=\sum_{\begin{subarray}{c} \rho \in \mcal{OP} \\
\mathrm{(finite \ sum)} \end{subarray}} a_{\rho}(f) \mf{p}_{\rho},
$$
then 
$$
\bE_{n} [f]= \sum_{r \ge 0} a_{(1^r)}(f) n^{\downarrow r}.
$$
In particular, 
if $a_{(1^r)}(f)$ vanish for all $r >k$, then
$\bE_n[f]$ is of degree at most $k$. 
\end{prop}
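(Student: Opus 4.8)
The plan is to reduce everything to the already-established Corollary~\ref{cor:Av_p_rho} by exploiting linearity of the expectation $\bE_n$. First I would observe that since the family $(\mf{p}_\rho)_{\rho \in \mcal{OP}}$ is a linear basis of $\Gamma$ (Proposition~\ref{prop:mp}~(ii)), any supersymmetric function $f$ admits a unique finite expansion $f = \sum_{\rho} a_\rho(f) \mf{p}_\rho$. Applying $\bE_n$ and using that it is a finite linear functional on $\Gamma$ (a weighted sum over the finite set $\mcal{SP}_n$), we get $\bE_n[f] = \sum_\rho a_\rho(f)\, \bE_n[\mf{p}_\rho]$.

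Next I would invoke Corollary~\ref{cor:Av_p_rho}, which states $\bE_n[\mf{p}_\rho] = \delta_{\rho,(1^k)} n^{\downarrow k}$ for $\rho \in \mcal{OP}_k$. Thus every term with $\rho \neq (1^r)$ for some $r$ drops out, and the surviving terms are precisely those indexed by the partitions $(1^r)$, $r \ge 0$ (with the convention that $(1^0) = \emptyset$ and $\mf{p}_\emptyset = 1$). This gives immediately
$$
\bE_n[f] = \sum_{r \ge 0} a_{(1^r)}(f)\, n^{\downarrow r},
$$
which is the first assertion. For the "in particular" clause, if $a_{(1^r)}(f) = 0$ for all $r > k$, the sum truncates at $r = k$, and since each $n^{\downarrow r} = n(n-1)\cdots(n-r+1)$ is a polynomial in $n$ of degree exactly $r$, the resulting polynomial $\bE_n[f]$ has degree at most $k$.

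There is essentially no obstacle here: the proposition is a formal corollary of Corollary~\ref{cor:Av_p_rho} together with the basis property. The only point requiring minimal care is the bookkeeping of the index $r = 0$ (the empty partition contributes the constant term $a_\emptyset(f)$, consistent with $n^{\downarrow 0} = 1$), and noting that the expansion of $f$ is a finite sum so no convergence issue arises. I would write the proof in two or three sentences.

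\begin{proof}
By Proposition~\ref{prop:mp}~(ii), the expansion $f = \sum_{\rho \in \mcal{OP}} a_\rho(f) \mf{p}_\rho$ is a finite sum. Since $\bE_n$ is linear, $\bE_n[f] = \sum_{\rho} a_\rho(f) \bE_n[\mf{p}_\rho]$, and Corollary~\ref{cor:Av_p_rho} gives $\bE_n[\mf{p}_\rho] = \delta_{\rho,(1^k)} n^{\downarrow k}$ for $\rho \in \mcal{OP}_k$. Hence only the terms with $\rho = (1^r)$ survive, yielding $\bE_n[f] = \sum_{r \ge 0} a_{(1^r)}(f) n^{\downarrow r}$. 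If $a_{(1^r)}(f) = 0$ for all $r > k$, this is a polynomial in $n$ of degree at most $k$ since $\deg n^{\downarrow r} = r$.
\end{proof}
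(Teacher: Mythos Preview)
Your proof is correct and matches the paper's approach exactly: the paper simply states that the proposition is a direct consequence of Corollary~\ref{cor:Av_p_rho} and does not even write out a proof, so your argument via linearity of $\bE_n$ and the basis property is precisely what is intended.
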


Inspired by \cite{IvanovKerov},
we define a degree filtration of the vector space $\Gamma$ by
$$
\deg_1 (\mf{p}_{\rho})= |\rho|+ m_1(\rho).
$$
More generally, for $f=\sum_{\rho} a_{\rho}(f) \mf{p}_{\rho} \in \Gamma$,
define
$$
\deg_1 (f)= \max_{\rho: a_{\rho}(f) \not=0} (|\rho|+m_1(\rho)).
$$

\begin{prop} \label{prop:degree1}
The degree of $\bE_n[f]$ as a polynomial in $n$ is at most 
$\frac{1}{2} \deg_1(f)$.
\end{prop}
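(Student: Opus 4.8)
The plan is to combine the explicit formula from Corollary \ref{cor:Av_p_rho} with the definition of the degree filtration $\deg_1$. Recall that by Corollary \ref{cor:Av_p_rho} we have $\bE_n[\mf{p}_\rho] = \delta_{\rho,(1^k)}\, n^{\downarrow k}$ for $\rho \in \mcal{OP}_k$, so only the terms $\mf{p}_{(1^r)}$ contribute to $\bE_n[f]$. Hence, writing $f = \sum_\rho a_\rho(f)\,\mf{p}_\rho$, we get $\bE_n[f] = \sum_{r \ge 0} a_{(1^r)}(f)\, n^{\downarrow r}$, whose degree as a polynomial in $n$ is $\max\{\, r : a_{(1^r)}(f) \neq 0 \,\}$ (or $-\infty$ if all these coefficients vanish).

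First I would observe that for the one-row odd partition $\rho = (1^r)$ we have $m_1(\rho) = r$ and $|\rho| = r$, so $\deg_1(\mf{p}_{(1^r)}) = |\rho| + m_1(\rho) = 2r$. Therefore, if $a_{(1^r)}(f) \neq 0$ for some $r$, then by the definition of $\deg_1(f)$ as the maximum of $|\sigma| + m_1(\sigma)$ over all $\sigma$ with $a_\sigma(f) \neq 0$, we must have $\deg_1(f) \ge 2r$, i.e. $r \le \tfrac12 \deg_1(f)$. Consequently every exponent $r$ appearing with nonzero coefficient in the expansion $\bE_n[f] = \sum_r a_{(1^r)}(f)\, n^{\downarrow r}$ satisfies $r \le \tfrac12 \deg_1(f)$, which is exactly the claim that $\deg \bE_n[f] \le \tfrac12 \deg_1(f)$.

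This argument is essentially immediate once Corollary \ref{cor:Av_p_rho} is in hand; there is no real obstacle, only two small points to state cleanly. The first is the elementary identity $\deg_1(\mf{p}_{(1^r)}) = 2r$, which is a direct substitution into the definition. The second is the degenerate case: if $a_{(1^r)}(f) = 0$ for all $r$, then $\bE_n[f]$ is the zero polynomial, and the bound holds vacuously (with the usual convention $\deg 0 = -\infty$); one may simply remark on this to be safe. No new machinery is needed beyond what precedes the statement.
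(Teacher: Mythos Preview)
Your argument is correct and is essentially the same as the paper's: both use Corollary~\ref{cor:Av_p_rho} to see that only the terms $\mf{p}_{(1^r)}$ contribute, together with the observation $\deg_1(\mf{p}_{(1^r)})=2r$. The paper phrases this as a reduction to the basis elements $f=\mf{p}_\rho$, while you use the expansion $\bE_n[f]=\sum_r a_{(1^r)}(f)\,n^{\downarrow r}$ directly, but the content is identical.
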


\begin{proof}
It is sufficient to check this for $f=\mf{p}_\rho$.
By virtue of Corollary \ref{cor:Av_p_rho}, we have:
if $\rho \not= (1^k)$ then $\bE_n [\mf{p}_{\rho}]=0$;
if $\rho=(1^k)$ then $\bE_n[\mf{p}_{\rho}]=n^{\downarrow k}$
and $\deg_1 (\mf{p}_{\rho})=2k$.
\end{proof}

\subsection{Examples}

We show some explicit expressions of $\bE_{n} [p_{\rho}]$,
which are presented in Subsection \ref{subsec:SPM}.
In Example \ref{example:p_to_mp}, we give expansions of some $p_{\rho}$
in $\mf{p}_{\sigma}$.
By Corollary \ref{cor:Av_p_rho}, we obtain the following identities
immediately. 
\begin{align*}
\bE_n[p_3] =& \bE_n[\mf{p}_{(3)} +3 \mf{p}_{(1^2)} + \mf{p}_{(1)}]
= 3n^{\downarrow 2} + n, \\
\bE_n[p_5] =& \bE_n[\mf{p}_{(5)} + 10 \mf{p}_{(3,1)} + 
\frac{35}{3} \mf{p}_{(3)} + \frac{40}{3} \mf{p}_{(1^3)} 
+15 \mf{p}_{(1^2)} + \mf{p}_{(1)}]  \\
=& \frac{40}{3} n^{\downarrow 3} + 15 n^{\downarrow 2} +n.
\end{align*}
Moreover, Theorem \ref{thm:ortho_pp} gives
\begin{align*}
\bE_{n} [p_3^2] 
=& \bE_{n} [(\mf{p}_{(3)} +3 \mf{p}_{(1^2)} + \mf{p}_{(1)})^2] \\
=& \bE_n[\mf{p}_{(3)} \mf{p}_{(3)} + 6 \mf{p}_{(3)} \mf{p}_{(1^2)}
+2 \mf{p}_{(3)} \mf{p}_{(1)} + 9 \mf{p}_{(1^2)} \mf{p}_{(1^2)} +
6\mf{p}_{(1^2)} \mf{p}_{(1)} + \mf{p}_{(1)} \mf{p}_{(1)}] \\
=& 12 n^{\downarrow 3}+0+0
 + 9 n^{\downarrow 2}\cdot n^{\downarrow 2} + 6 n^{\downarrow 2} \cdot n+ n^2 \\
=& 9 n^{\downarrow 4} + 54 n^{\downarrow 3} + 31 n^{\downarrow 2} + n.
\end{align*}

\section{Content evaluations} \label{sec:Content_Evaluation}

\subsection{Supersymmetry}

In this section, we give a proof of Theorem \ref{thm:content_evaluation}.
Let $\lambda$ be a strict partition and 
recall the shifted Young diagram 
$$
S(\lambda)= \{(i,j) \in \bZ^2 \ | \ 1 \le i \le \ell(\lambda), \ i \le j \le \lambda_i+i-1\}.
$$
For each $\square=(i,j) \in S(\lambda)$, 
its content $c_{\square}$ is defined by $c_{\square}=j-i$.
We find
\begin{align*}
\{c_\square \in \bZ \ | \ \square \in S(\lambda)\}
=&
\{j-i  \ | \ 1 \le i \le \ell(\lambda), \ i \le j \le \lambda_i+i-1 \} \\
=&  \{j-1 \ | \ 1 \le i \le \ell(\lambda), \ 1 \le j \le \lambda_i \}
%=&
%\{j-i  \ | \ 1 \le i \le \ell(\lambda), \ 1+i \le j \le \lambda_i+i \} \\
%=&  \{j \ | \ 1 \le i \le \ell(\lambda), \ 1 \le j \le \lambda_i \}
\end{align*}
as multi-sets.

\begin{lemma} \label{lem:p_exp1}
For each $m=0, 1,2,\dots$, 
$$
p_{2m+1} (\lambda)= \sum_{\square \in S(\lambda)} 
\{(c_\square+1)^{2m+1} -c_\square^
{2m+1}\}
%p_{2m+1} (\lambda)= \sum_{\square \in S(\lambda)} \{c_\square^{2m+1} -(c_\square-1)^
%{2m+1}\}
$$
for any strict partition $\lambda$.
\end{lemma}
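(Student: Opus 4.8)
The plan is to express the power-sum $p_{2m+1}(\lambda)$ as a telescoping sum over columns of the shifted diagram. Recall from the computation just above the lemma that, as multi-sets,
$$
\{c_\square \mid \square \in S(\lambda)\} = \{ j-1 \mid 1 \le i \le \ell(\lambda),\ 1 \le j \le \lambda_i \}.
$$
First I would rewrite $p_{2m+1}(\lambda) = \sum_{i=1}^{\ell(\lambda)} \lambda_i^{2m+1}$ and observe that for a single part $\lambda_i$ one has the elementary telescoping identity
$$
\lambda_i^{2m+1} = \sum_{j=1}^{\lambda_i} \left( j^{2m+1} - (j-1)^{2m+1} \right).
$$

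Next I would reindex the inner sum using the content dictionary above: the pair $(i,j)$ with $1 \le j \le \lambda_i$ corresponds to a box of $S(\lambda)$ whose content is $c_\square = j-1$, so that $j = c_\square + 1$ and $j - 1 = c_\square$. Substituting, the summand $j^{2m+1} - (j-1)^{2m+1}$ becomes exactly $(c_\square+1)^{2m+1} - c_\square^{2m+1}$. Summing over all $i$ and all admissible $j$ then ranges precisely over all boxes $\square \in S(\lambda)$, giving
$$
p_{2m+1}(\lambda) = \sum_{i=1}^{\ell(\lambda)} \sum_{j=1}^{\lambda_i} \left( j^{2m+1} - (j-1)^{2m+1} \right) = \sum_{\square \in S(\lambda)} \left\{ (c_\square+1)^{2m+1} - c_\square^{2m+1} \right\},
$$
which is the claimed formula.

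There is essentially no obstacle here — the only thing to be careful about is the bookkeeping in the reindexing: one must confirm that the correspondence $(i,j) \leftrightarrow \square$ is a genuine bijection of index sets (not merely a multi-set equality of contents), which is immediate from the definition $S(\lambda) = \{(i,j) : 1 \le i \le \ell(\lambda),\ i \le j \le \lambda_i + i - 1\}$ after the shift $j \mapsto j - i + 1$ within each row. The case $m = 0$ (where $p_1(\lambda) = |\lambda| = |S(\lambda)|$ and the right side telescopes to a sum of $1$'s) and the empty partition are both covered automatically. This lemma is the base identity from which one then builds, via the factorial-function machinery, the supersymmetric functions $\widehat{F}$ of Theorem \ref{thm:content_evaluation}.
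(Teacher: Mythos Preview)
Your proof is correct and more direct than the paper's. The paper instead introduces the generating function
\[
\Phi(u;\lambda)=\prod_{i=1}^{\ell(\lambda)}\frac{1+\lambda_i u}{1-\lambda_i u},
\]
computes $\log\Phi(u;\lambda)$ in two ways --- once by expanding the log of each factor to get $2\sum_{m\ge 0}\frac{u^{2m+1}}{2m+1}p_{2m+1}(\lambda)$, and once by first telescoping each factor as $\prod_{j=1}^{\lambda_i}\frac{(1+ju)(1-(j-1)u)}{(1-ju)(1+(j-1)u)}$ and then expanding --- and equates coefficients. Both arguments rest on the same telescoping idea, but you apply it directly to the power $\lambda_i^{2m+1}$ for each fixed $m$, whereas the paper telescopes once at the level of the rational function and thereby proves the identity for all $m$ simultaneously. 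Your route is shorter and needs no auxiliary object; the paper's route has the minor conceptual bonus of packaging the whole family of identities into a single equality of formal power series (a device the author reuses in Section~\ref{sec:Remark_HX}).
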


\begin{proof}
Consider the function 
$$
\Phi(u;\lambda)= \prod_{i=1}^{\ell(\lambda)} \frac{1+\lambda_i u }{1-\lambda_i u}.
$$
The Taylor expansion of $\log \Phi(u;\lambda)$ at $u=0$ is
\begin{align*}
& \log \Phi(u;\lambda)= \sum_{i=1}^{\ell(\lambda)} \{\log(1+\lambda_i u) -
\log(1-\lambda_i u)\} \\
=& \sum_{i=1}^{\ell(\lambda)} \sum_{r=1}^\infty \frac{u^r}{r} \{
1+ (-1)^{r-1}\} \lambda^r_i 
= 2  \sum_{m =0}^\infty \frac{u^{2m+1}}{2m+1} 
p_{2m+1}(\lambda).
\end{align*}
On the other hand, since
$$ 
\frac{1+\lambda_i u }{1 -\lambda_i u} 
= \prod_{j=1}^{\lambda_i} \frac{(1+ju)(1- (j-1)u)}{(1-ju)(1+(j-1)u)},
$$
we see that
$$
\Phi(u;\lambda)=\prod_{i=1}^{\ell(\lambda)} \prod_{j=1}^{\lambda_i}
 \frac{(1+ju)(1- (j-1)u)}{(1-ju)(1+(j-1)u)}
  = \prod_{\square \in S(\lambda)} 
  \frac{(1+(c_\square+1) u)(1-c_\square u)}
  {(1-(c_\square+1) u)(1+c_\square u)}. 
% = \prod_{\square \in S(\lambda)} \frac{(1+c_\square u)(1-(c_\square-1)u)}{(1-c_\square u)(1+(c_\square-1)u)}.
$$
In this expression,
the Taylor expansion of $\log \Phi(u;\lambda)$ at $u=0$
is 
$$
\log \Phi(u;\lambda)= 2 \sum_{m=0}^\infty 
\frac{u^{2m+1}}{2m+1} \sum_{\square \in S(\lambda)} \{(c_\square+1)^{2m+1} 
-c_\square^{2m+1}\}.
%\log \Phi(u;\lambda)= 2 \sum_{m=0}^\infty 
%\frac{u^{2m+1}}{2m+1} \sum_{\square \in S(\lambda)} \{c_\square^{2m+1} 
%-(c_\square-1)^{2m+1}\}.
$$
Comparing coefficients in two expressions of the Taylor expansion, we obtain the desired formula.
\end{proof}

\begin{lemma} \label{lem:XtoY}
Let $R(X)$ be a polynomial in a variable $X$. 
Put $Y=X(X+1)$.
Then there exists a polynomial $\tilde{R}$ in $Y$
such that $\tilde{R}(Y)= R(X)$
if and only if
$R$ satisfies the functional equation $R(X)=
R(-X-1)$.
Moreover,
if the top-degree term of $R$ is $a X^{2m}$ then 
the top-degree term of $\tilde{R}$ is $a Y^m$.
\end{lemma}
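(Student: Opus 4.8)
The plan is to prove both directions of the equivalence plus the claim about top-degree terms, treating the ``only if'' direction and the top-degree assertion as essentially immediate and concentrating effort on the ``if'' direction. First I would dispose of the easy implication: if $R(X)=\tilde R(Y)$ with $Y=X(X+1)$, then since $Y$ is manifestly invariant under $X \mapsto -X-1$ (indeed $(-X-1)(-X-1+1)=(-X-1)(-X)=X(X+1)$), the identity $R(X)=R(-X-1)$ follows immediately by substitution. For the top-degree part, once the ``if'' direction is established, observe that if $\deg R = 2m$ (the degree must be even, a fact that drops out of the construction below) with leading term $aX^{2m}$, then $\tilde R(Y)$ has degree $m$ in $Y$ since $Y$ has degree $2$ in $X$, and matching leading coefficients gives leading term $aY^m$; the degree cannot be reduced because $Y^m$ contributes $X^{2m}$ with coefficient $1$.

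For the substantive ``if'' direction, I would argue by induction on $\deg R$. The base case $\deg R \le 0$ is trivial ($R$ constant, take $\tilde R = R$). For the inductive step, suppose $R(X)=R(-X-1)$ and $\deg R = d > 0$. The key observation is that the symmetry forces $d$ to be even: the substitution $X \mapsto -X-1$ sends the leading term $aX^d$ to $a(-X-1)^d = a(-1)^d X^d + (\text{lower order})$, so comparing leading coefficients of $R(X)$ and $R(-X-1)$ gives $a = a(-1)^d$, hence $d$ is even, say $d = 2m$. Now set $Q(X) = R(X) - a\,\bigl(X(X+1)\bigr)^m$. Since $X(X+1)$ is symmetric under $X \mapsto -X-1$, the polynomial $\bigl(X(X+1)\bigr)^m$ satisfies the same functional equation, so $Q$ does too; and $Q$ has degree strictly less than $2m$ because the leading terms cancel (both equal $aX^{2m}$). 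By the inductive hypothesis there is a polynomial $\tilde Q$ in $Y$ with $\tilde Q(Y) = Q(X)$, and then $\tilde R(Y) := \tilde Q(Y) + aY^m$ satisfies $\tilde R(Y) = R(X)$, completing the induction.

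I expect the main (very mild) obstacle to be purely bookkeeping: making sure the induction on degree is set up so that the functional equation is genuinely inherited by $Q$ at each stage, and confirming that subtracting $a(X(X+1))^m$ strictly lowers the degree rather than merely the leading coefficient — this is where one uses that $X(X+1) = X^2 + X$ has leading term exactly $X^2$, so its $m$-th power has leading term exactly $X^{2m}$ with coefficient $1$. No uniqueness claim is needed (and indeed $\tilde R$ is unique since $X(X+1)$ generates a polynomial subring of $\mathbb{Q}[X]$ over which $\mathbb{Q}[X]$ is free of rank $2$, but the lemma as stated only asserts existence). The top-degree statement then follows by unwinding the recursion: the leading term of $R$ is processed first and contributes $aY^m$ to $\tilde R$, and everything subsequently added has strictly smaller $Y$-degree.
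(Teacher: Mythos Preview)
Your proof is correct. The ``only if'' direction and the top-degree claim are handled exactly as in the paper. For the ``if'' direction, however, you take a genuinely different route.

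The paper argues by the linear change of variable $x = X + \tfrac{1}{2}$: setting $r(x) = R(x - \tfrac{1}{2})$, the functional equation $R(X)=R(-X-1)$ becomes the ordinary evenness $r(-x)=r(x)$, so $r(x)=\sum_{j=0}^m a_j x^{2j}$, and then one simply observes that $(X+\tfrac{1}{2})^2 = X(X+1) + \tfrac{1}{4} = Y + \tfrac{1}{4}$, giving $R(X)=\sum_j a_j (Y+\tfrac{1}{4})^j =: \tilde R(Y)$ in one stroke. Your argument instead proceeds by induction on the degree: you first force $\deg R$ to be even by comparing leading coefficients, then peel off $a\bigl(X(X+1)\bigr)^m$ and recurse on the remainder. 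Both arguments are short and elementary; the paper's substitution is slightly slicker in that it reduces to the well-known fact about even polynomials and avoids any induction, while your approach is more self-contained and makes the leading-term claim emerge transparently from the recursion. Either is entirely adequate for this lemma.
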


\begin{proof}
First we suppose that $R(X)$ can be expressed as 
$R(X)= \tilde{R}(Y)$.
Since $Y=X(X+1)$ is invariant under the change of variable $X \mapsto \bar{X}=-X-1$,
we obtain the functional equation $R(X)= R(-1-X)$.

Next suppose that $R(X)$ satisfies the functional equation $R(X)=R(-1-X)$.
In general, a polynomial function $y=r(x)$ is symmetric with respect to the $y$-axis
in the $xy$-plane
if and only if $r$ is of the form
$$
r(x)= \sum_{j=0}^{m} a_j x^{2j}
$$
with certain coefficients $a_j$.
We may suppose $a_m \not=0$.
Put $R(X)= r(X+\frac{1}{2} )$.
We can observe that the symmetry $r(-x)=r(x)$ is equivalent to 
the functional equation $R(-X-1)=R(X)$.
Moreover, 
$R(X)$ is of the form
$$
R(X)= \sum_{j=0}^m a_j (X+\tfrac{1}{2} )^{2j} = \sum_{j=0}^m a_j
\(X(X+1) + \tfrac{1}{4}\)^{j}
=\sum_{j=0}^m  a_j (Y+\tfrac{1}{4})^j =: \tilde{R}(Y).
$$
The top-degree term of $R(X)$ is $a_m X^{2m}$, whereas
that of $\tilde{R}(Y)$ is $a_m Y^m$.
%A polynomial function $y=r(x)$ is symmetric with respect to the $y$-axis
%in the $xy$-plane
%if and only if $r$ is of the form
%$$
%r(x)= \sum_{j=0}^{m} a_j x^{2j}
%$$
%with certain coefficients $a_j$.
%We may suppose $a_m \not=0$.
%Put $R(X)= r(X+\frac{1}{2} )$.
%We can observe that the symmetry $r(-x)=r(x)$ is equivalent to 
%the functional equation $R(-X-1)=R(X)$.
%Moreover, 
%$R(X)$ is of the form
%$$
%R(X)= \sum_{j=0}^m a_j (X+\tfrac{1}{2} )^{2j} = \sum_{j=0}^m a_j
%\(X(X+1) + \tfrac{1}{4}\)^{j}
%=\sum_{j=0}^m  a_j (Y+\tfrac{1}{4})^j =: \tilde{R}(Y).
%$$
%The top-degree term of $R(X)$ is $a_m X^{2m}$, whereas
%that of $\tilde{R}(Y)$ is $a_m Y^m$.
\end{proof}

Define
$$
\widehat{c}_\square = \frac{1}{2} c_{\square}
(c_{\square}+1)
$$
for each $\square \in S(\lambda)$.

\begin{prop} \label{prop:hat_p}
Let $k=1,2,3,\dots$.
The function $\widehat{p}_k$ on $\mcal{SP}$ defined by
$$
\widehat{p}_k (\lambda) = p_k \( \widehat{c}_{\square}: \square \in S(\lambda)\)
=2^{-k} \sum_{\square \in S(\lambda)} \{ c_{\square}(c_{\square}+1) \}^k
$$
is supersymmetric.
Moreover,
if we set $\widehat{p}_0(\lambda)=|\lambda|$, then
$\widehat{p}_0$ is also supersymmetric.
\end{prop}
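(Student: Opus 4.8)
The plan is to express every $\widehat p_k$ in terms of the odd power sums $p_{2m+1}$, using Lemma~\ref{lem:p_exp1} together with the substitution $Y=X(X+1)$ of Lemma~\ref{lem:XtoY}.

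First I would dispose of $\widehat p_0$. Since row $i$ of $S(\lambda)$ contains exactly $\lambda_i$ boxes, we have $|S(\lambda)|=|\lambda|$, so $\widehat p_0(\lambda)=|\lambda|=p_1(\lambda)$ for every $\lambda\in\mcal{SP}$; thus $\widehat p_0$ is the restriction to $\mcal{SP}$ of $p_1\in\Gamma$, hence supersymmetric.

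For $k\ge 1$, fix $m\ge 0$ and set $R_m(X)=(X+1)^{2m+1}-X^{2m+1}$. The key observation is that $R_m$ meets the hypothesis of Lemma~\ref{lem:XtoY}: a one-line computation gives $R_m(-X-1)=R_m(X)$, while $R_m$ has degree $2m$ with leading term $(2m+1)X^{2m}$. Lemma~\ref{lem:XtoY} then produces a polynomial $\tilde R_m$ of degree $m$ in $Y$, with leading coefficient $2m+1$, such that $R_m(X)=\tilde R_m(X(X+1))$. Writing $\tilde R_m(2Y)=\sum_{j=0}^m b_{m,j}Y^j$, we have $b_{m,m}=(2m+1)2^m\neq 0$; and since $c_\square(c_\square+1)=2\widehat c_\square$, Lemma~\ref{lem:p_exp1} gives
$$
p_{2m+1}(\lambda)=\sum_{\square\in S(\lambda)}R_m(c_\square)=\sum_{\square\in S(\lambda)}\tilde R_m(2\widehat c_\square)=\sum_{j=0}^m b_{m,j}\,\widehat p_j(\lambda)
$$
for every $\lambda\in\mcal{SP}$, with the convention $\widehat p_0(\lambda)=|\lambda|$ as above.

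Finally I would read this as a triangular linear system: for $0\le m\le k$ the functions $p_{2m+1}(\lambda)$ are expressed through $\widehat p_0(\lambda),\dots,\widehat p_k(\lambda)$ by the $(k+1)\times(k+1)$ matrix with entries $b_{m,j}$, which vanish for $j>m$ and are nonzero on the diagonal, so the matrix is invertible over $\bQ$. Inverting it writes each $\widehat p_k(\lambda)$ as a fixed $\bQ$-linear combination of $p_1(\lambda),p_3(\lambda),\dots,p_{2k+1}(\lambda)$, valid for all $\lambda\in\mcal{SP}$. The corresponding linear combination of $p_1,p_3,\dots,p_{2k+1}$ lies in $\Gamma$, and since elements of $\Gamma$ are determined by their values on $\mcal{SP}$, this shows $\widehat p_k$ is supersymmetric. (Equivalently one inducts on $k$, solving the displayed relation for $\widehat p_k$ in terms of $p_{2k+1}$ and $\widehat p_0,\dots,\widehat p_{k-1}$.) All the computations are routine; the only places needing genuine care are verifying the symmetry $R_m(X)=R_m(-X-1)$ so that Lemma~\ref{lem:XtoY} applies, and tracking leading terms so that the triangular system is honestly invertible. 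I do not expect any obstacle beyond that.
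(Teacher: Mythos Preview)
Your proof is correct and follows essentially the same route as the paper: use Lemma~\ref{lem:p_exp1}, verify that $R_m(X)=(X+1)^{2m+1}-X^{2m+1}$ satisfies $R_m(X)=R_m(-X-1)$ so that Lemma~\ref{lem:XtoY} applies, and then invert the resulting lower-triangular system relating the $p_{2m+1}$ to the $\widehat p_j$. The paper records the inverted relation as $\widehat p_k=\frac{1}{2^k(2k+1)}p_{2k+1}+\sum_{r=0}^{k-1}b_{kr}p_{2r+1}$, which is exactly what your matrix inversion (or equivalent induction) produces.
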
 

\begin{proof}
First, we see that $\widehat{p}_0(\lambda)=|\lambda|=
\lambda_1+\lambda_2+\cdots=p_1(\lambda)$,
and hence $\widehat{p}_0$ is supersymmetric.

Let $m$ be a nonnegative integer and  $\lambda$ a strict partition.
Lemma \ref{lem:p_exp1} says that 
$$
p_{2m+1} (\lambda)= \sum_{\square \in S(\lambda)} \{(c_\square+1)^{2m+1} -c_\square^
{2m+1}\}.
$$
Since the polynomial function $R(X):=(X+1)^{2m+1} -X^{2m+1} = (2m+1) X^{2m} +
\cdots$ clearly satisfies the 
functional equation $R(X)=R(-X-1)$, it can be expressed 
as a polynomial in $Y=X(X+1)$ of degree $m$ by Lemma \ref{lem:XtoY}.
Hence there exist universal coefficients $a_{mr}$ $(r=0,1,2,\dots,
m-1)$ such that 
\begin{align*}
p_{2m+1}(\lambda)=& \sum_{\square \in S(\lambda)} 
\left[(2m+1) \{c_\square (c_{\square}+1)\}^m
 + \sum_{r=0}^{m-1} a_{mr} 
 \{c_\square (c_{\square}+1)\}^r \right] \\
=& 2^{m} (2m+1) \widehat{p}_{m}(\lambda)+ \sum_{r=0}^{m-1} 2^ra_{mr} \widehat{p}_r(\lambda).
\end{align*}
This relation implies that for each $k=0,1,2,\dots$, 
\begin{equation} \label{eq:hatp_to_p}
\widehat{p}_k= \frac{1}{2^k(2k+1)} p_{2k+1} + \sum_{r=0}^{k-1} b_{kr} p_{2r+1} 
\end{equation}
with some rational coefficients $b_{kr}$.
Therefore $\widehat{p}_k$ belongs to $\Gamma$.
\end{proof}

\begin{proof}[Proof of Theorem \ref{thm:content_evaluation}]
Let $F$ be any symmetric function.
It is well known that $F$ can be uniquely expressed as 
a polynomial in variables $p_1,p_2,\dots$.
Hence the function $\widehat{F}$ on $\mcal{SP}$ defined by
$$
\widehat{F}(\lambda)= F(\widehat{c}_{\square} \ : \ 
\square \in S(\lambda)) 
$$ 
is a polynomial 
in $\widehat{p}_{1},\widehat{p}_{2},\dots$.
Theorem \ref{thm:content_evaluation} follows from Proposition \ref{prop:hat_p}.
\end{proof}

The family $(\widehat{p}_k)_{k=0,1,2,\dots}$ 
is an algebraic basis of $\Gamma$
by \eqref{eq:hatp_to_p}.
This shows that
Theorem~\ref{thm:ShPlPoly_mu} and Corollary~\ref{cor:HX_polynomiality}
are {\it equivalent}.
Furthermore, we have obtained the following proposition.

\begin{prop}
The algebra $\Gamma$ coincides with 
the algebra generated by the function $\lambda \mapsto |\lambda|$ and 
the functions $\widehat{F}$, where $F$ are (ordinary) symmetric functions.
\end{prop}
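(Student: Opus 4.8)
The plan is to read the proposition as an identity between two subalgebras of the algebra of all $\bQ$-valued functions on $\mcal{SP}$, using the fact recalled in Section~\ref{section:supersymmetric_functions} that the evaluation map $f \mapsto (f(\lambda))_{\lambda \in \mcal{SP}}$ identifies $\Gamma$ with a subalgebra of those functions. Write $A$ for the subalgebra generated by the function $\lambda \mapsto |\lambda|$ together with all the functions $\widehat{F}$, where $F$ ranges over ordinary symmetric functions. I would then prove $A = \Gamma$ by establishing the two inclusions separately.

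The inclusion $A \subseteq \Gamma$ is immediate from what has already been proved: the function $\lambda \mapsto |\lambda|$ equals $p_1$ restricted to $\mcal{SP}$, since $|\lambda| = \lambda_1 + \lambda_2 + \cdots = p_1(\lambda)$, so it lies in $\Gamma$; and each $\widehat{F}$ lies in $\Gamma$ by Theorem~\ref{thm:content_evaluation}. Since $\Gamma$ is an algebra, it contains the subalgebra generated by all of these functions.

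For the reverse inclusion $\Gamma \subseteq A$, recall that $\Gamma$ is generated as a $\bQ$-algebra by $p_1, p_3, p_5, \dots$, so it suffices to check that $p_{2k+1} \in A$ for every $k \ge 0$. Here I would invoke \eqref{eq:hatp_to_p}, which writes $\widehat{p}_k$ as $\tfrac{1}{2^k(2k+1)} p_{2k+1}$ plus a $\bQ$-linear combination of $p_1, p_3, \dots, p_{2k-1}$. This is a triangular transition with nonzero diagonal coefficients, hence invertible, so an induction on $k$ shows that each $p_{2k+1}$ is a $\bQ$-linear combination of $\widehat{p}_0, \widehat{p}_1, \dots, \widehat{p}_k$. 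Now $\widehat{p}_0$ is by definition the function $\lambda \mapsto |\lambda|$, while for $k \ge 1$ the function $\widehat{p}_k$ is exactly $\widehat{F}$ with $F = p_k$ the ordinary power-sum symmetric function; hence $\widehat{p}_0, \dots, \widehat{p}_k$ all lie in $A$, and therefore $p_{2k+1} \in A$. Combining the two inclusions yields $A = \Gamma$.

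There is no real obstacle here: the proposition is essentially a repackaging of \eqref{eq:hatp_to_p} (equivalently, of the already-noted fact that $(\widehat{p}_k)_{k \ge 0}$ is an algebraic basis of $\Gamma$). The one point I would state explicitly is that the generator $\lambda \mapsto |\lambda| = \widehat{p}_0$ genuinely must be adjoined and is not of the form $\widehat{F}$: for every symmetric function $F$ one has $\widehat{F}(\emptyset) = \widehat{F}((1))$, both equal to the constant term of $F$ (the unique box of $(1)$ has content $0$, so $\widehat{c}_\square = 0$ there), whereas $|\emptyset| = 0 \neq 1 = |(1)|$.
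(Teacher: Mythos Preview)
Your proof is correct and follows essentially the same route as the paper: the proposition is stated immediately after the observation that \eqref{eq:hatp_to_p} makes $(\widehat{p}_k)_{k\ge 0}$ an algebraic basis of $\Gamma$, and the paper offers no further argument beyond that remark. Your write-up simply unpacks this into the two inclusions, and your final paragraph explaining why the generator $\lambda\mapsto|\lambda|$ is genuinely needed (via $\widehat{F}(\emptyset)=\widehat{F}((1))$) is a nice addition not present in the paper.
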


\subsection{Examples} \label{subsec:ex_cont}
 
We give some examples of $\bE_{\mu,n}[ \widehat{F}]$,
where $F$ is a symmetric function.
It is easy to see that
$$
%\widehat{p}_{0}= p_1, \qquad 
\widehat{p}_1= \frac{1}{6} p_3- \frac{1}{6} p_1, \qquad 
\widehat{p}_2=  \frac{1}{20} p_5 - \frac{1}{12} p_3 + \frac{1}{30} p_1.
%\widehat{p}_1= \frac{1}{3} p_3- \frac{1}{3} p_1, \qquad 
%\widehat{p}_2=  \frac{1}{5} p_5 - \frac{1}{3} p_3 + \frac{2}{15} p_1.
$$
Indeed, for example, $\widehat{p}_1$ is computed as follows: 
$$
\widehat{p}_1(\lambda)= \sum_{\square \in S(\lambda)} \frac{1}{2}c_{\square}(c_\square+1)
=\frac{1}{2} \sum_{i=1}^{\ell(\lambda)} \sum_{j=1}^{\lambda_i} (j^2-j) 
= \sum_{i=1}^{\ell(\lambda)} \frac{\lambda_i^3-\lambda_i}{6} =
\frac{1}{6} (p_{3}(\lambda) - p_1(\lambda)).
$$
Using Example \ref{example:p_to_mp} and Corollary \ref{cor:Av_p_rho}, we have
$$
\bE_n [\widehat{p}_1] = 
\bE_n[\frac{1}{6} \mf{p}_{(3)} + 
\frac{1}{2} \mf{p}_{(1^2)}]=
\frac{1}{2}n^{\downarrow 2}.
$$
Similarly, we have
$$
\bE_n[\widehat{p}_2] = 
\bE_n[\frac{1}{20} \mf{p}_{(5)} + \frac{1}{2} \mf{p}_{(3,1)} +\frac{1}{2} \mf{p}_{(3)} +
\frac{2}{3} \mf{p}_{(1^3)} +\frac{1}{2} \mf{p}_{(1^2)}]= 
\frac{2}{3} n^{\downarrow 3} + \frac{1}{2} n^{\downarrow 2}.
$$
Moreover,  Theorem \ref{thm:ortho_pp} gives 
\begin{align*}
\bE_n[(\widehat{p}_1)^2] =& \bE_n[ \(\tfrac{1}{6} \mf{p}_{(3)} + \tfrac{1}{2} \mf{p}_{(1^2)}\)^2] \\
=& 
\frac{1}{36} \bE_n [ \mf{p}_{(3)} \mf{p}_{(3)}] + \frac{1}{6} \bE_n[ \mf{p}_{(3)} \mf{p}_{(1^2)}]+ \frac{1}{4}  \bE_n[\mf{p}_{(1^2)} \mf{p}_{(1^2)}] \\
=& \frac{1}{36}\cdot 12 n^{\downarrow 3} + \frac{1}{4} n^2 (n-1)^2 \\
=& 6 \binom{n}{4} + 8 \binom{n}{3} +\binom{n}{2}.
%\bE_n[(\widehat{p}_1)^2] =& \bE_n[\frac{1}{36} \mf{p}_{(3)} \mf{p}_{(3)} + 
%\frac{1}{6} \mf{p}_{(3)} \mf{p}_{(1^2)} + 
%\frac{1}{4}\mf{p}_{(1^2)} \mf{p}_{(1^2)}] \\
%=& \frac{1}{36} \cdot 12 n^{\downarrow 3} +
%n^2(n-1)^2 \\
%=& \frac{1}{12} (n^{\downarrow 4} + 4 n^{\downarrow 3}
%-8 n^{\downarrow 2}-2n).
\end{align*}

The following example is seen in \cite[Theorem 1.3]{HanXiong2015}:
$$
\bE_{\mu,n} [\widehat{p}_1 - \widehat{p}_1(\mu)] = \frac{1}{2} n(n-1) + n|\mu|.
$$
We can give its simple proof as follows.
Since $\widehat{p}_1 =\frac{1}{6} \mf{p}_{(3)} + \frac{1}{2} \mf{p}_{(1^2)}$,
we have $\widehat{p}_1(\mu)= \frac{1}{6} \mf{p}_{(3)}(\mu) + \frac{1}{2}|\mu|(|\mu|-1)$,
and $\bE_{\mu,n}[ \widehat{p}_1] = \frac{1}{6} \mf{p}_{(3)}(\mu)+ \frac{1}{2} 
(n+|\mu|)(n+|\mu|-1)$ by virtue of
Theorem \ref{thm:mu_average_M} and \eqref{eq:mu_average_M_1}.
Therefore we have
$$
\bE_{\mu,n} [\widehat{p}_1 - \widehat{p}_1(\mu)] = \frac{1}{2} (n+|\mu|)(n+|\mu|-1) -\frac{1}{2}
|\mu|(|\mu|-1) = \frac{1}{2} n(n-1) +n |\mu|.
$$

\section{Remarks on functions 
introduced by Han and Xiong
} \label{sec:Remark_HX}

%Let $\lambda$ be a strict partition and consider its shifted Young diagram
%$S(\lambda)$. 
We identify 
a strict partition
$\lambda$ with 
its shifted Young diagram
$S(\lambda)$ as usual.
A box $\square=(i,j)$ in $S(\lambda)$ is said to be an {\it outer corner}
of $\lambda$ if we obtain a new strict partition by removing the box $\square$ from $S(\lambda)$.
A box $\square \in \bZ^2$ is said to be an {\it inner corner}
of $\lambda$ if we obtain a new strict partition by adding the box $\square$ 
to $S(\lambda)$.
Denote by $\mathbb{O}_\lambda$ and by $\mathbb{I}_\lambda$ the
set of all outer and inner corners of $\lambda$, respectively.
For example,
if $\lambda=(5,4,2)$, then we have
$\mathbb{O}_\lambda=\{(2,5),(3,4)\}$
and $\mathbb{I}_\lambda=\{(1,6),(3,5), (4,4)\}$.

%We introduce the following notation.
%For a nonnegative integer $n$, we set 
%$$
%\widehat{n}:=\frac{1}{2} n(n+1).
%$$
For each integer $k \ge 1$,  
we define 
a function $\psi_k$ on $\mcal{SP}$ by 
$$
\psi_k(\lambda)= \sum_{\square \in \mathbb{I}_\lambda} \{ c_\square (c_\square+1)\}^k
-\sum_{\square \in \mathbb{O}_\lambda} 
\{ c_\square (c_\square+1)\}^k.
$$
In their paper \cite{HanXiong2015}, Han and Xiong first introduced
those functions.
Remark that 
these are denoted by
$q_k$ (or by $\Phi^k$)
 in their articles
with slight change $\psi_k=2^k q_k$.
Our purpose in this short section is 
to give an alternative simple expression 
of $\psi_k$
and to show that they are supersymmetric.

\begin{prop}
For each $\lambda \in \mcal{SP}$, we have
\begin{equation} \label{eq:ge_q}
\prod_{i =1}^{\ell(\lambda)} 
\frac{1-\lambda_i (\lambda_i-1) u}
{1- \lambda_i(\lambda_i+1) u}= \exp \( \sum_{k=1}^\infty \frac{u^k}{k} \psi_k(\lambda) \).
\end{equation}
\end{prop}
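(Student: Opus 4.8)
The plan is to take logarithms of both sides. Writing $-\log(1-yu)=\sum_{k\ge 1}y^{k}u^{k}/k$, the right-hand side of \eqref{eq:ge_q} becomes
\[
\exp\Bigl(\sum_{k\ge 1}\frac{u^{k}}{k}\psi_{k}(\lambda)\Bigr)
=\frac{\prod_{\square\in\mathbb{O}_{\lambda}}\bigl(1-c_{\square}(c_{\square}+1)u\bigr)}{\prod_{\square\in\mathbb{I}_{\lambda}}\bigl(1-c_{\square}(c_{\square}+1)u\bigr)},
\]
so the proposition is equivalent to the rational-function identity
\[
\prod_{i=1}^{\ell(\lambda)}\frac{1-\lambda_{i}(\lambda_{i}-1)u}{1-\lambda_{i}(\lambda_{i}+1)u}
=\frac{\prod_{\square\in\mathbb{O}_{\lambda}}\bigl(1-c_{\square}(c_{\square}+1)u\bigr)}{\prod_{\square\in\mathbb{I}_{\lambda}}\bigl(1-c_{\square}(c_{\square}+1)u\bigr)}.
\]
I would introduce the abbreviation $f(x)=1-x(x+1)u$, record that $f(x)=f(-x-1)$ and in particular $f(0)=1$ (so a corner of content $0$ contributes the trivial factor), and observe that $1-\lambda_{i}(\lambda_{i}-1)u=f(\lambda_{i}-1)$ and $1-\lambda_{i}(\lambda_{i}+1)u=f(\lambda_{i})$.

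Next, let $E=\{\lambda_{1},\dots,\lambda_{\ell(\lambda)}\}$ be the set of parts of $\lambda$ and $E-1=\{d-1:d\in E\}$. The left-hand side rewrites as $\prod_{d\in E}f(d-1)/\prod_{d\in E}f(d)=\prod_{e\in E-1}f(e)/\prod_{e\in E}f(e)$, and after cancelling the factors indexed by the set $(E-1)\cap E$,
\[
\prod_{i=1}^{\ell(\lambda)}\frac{1-\lambda_{i}(\lambda_{i}-1)u}{1-\lambda_{i}(\lambda_{i}+1)u}
=\frac{\prod_{e\in(E-1)\setminus E}f(e)}{\prod_{e\in E\setminus(E-1)}f(e)}.
\]
It then remains to identify the two index sets with the corner data: a box is an outer corner of $\lambda$ exactly when it is the rightmost box of a row $i$ with $\lambda_{i}-1\notin E$ (this includes the last row, deleting a part equal to $1$ being allowed), and its content is $\lambda_{i}-1$, so $\{c_{\square}:\square\in\mathbb{O}_{\lambda}\}=(E-1)\setminus E$; similarly a box is an inner corner exactly when it either extends a row $i$ with $\lambda_{i}+1\notin E$, with content $\lambda_{i}$, or starts a new bottom row, which is possible precisely when $1\notin E$ and has content $0$, so $\{c_{\square}:\square\in\mathbb{I}_{\lambda}\}$ is $E\setminus(E-1)$ together with the point $0$ when $1\notin E$. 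Since $f(0)=1$ this extra point is invisible in the product, and the two rational functions coincide.

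The routine but genuinely delicate part is this last matching of corners: one must pin down exactly which rows produce outer and inner corners and handle the boundary cases correctly — the last row is always removable, the first row is always extendable, and content-$0$ corners appear when $1\notin E$. I would state the criteria explicitly (a box $(i,j)\in S(\lambda)$ is an outer corner iff $j=\lambda_{i}+i-1$ and either $i=\ell(\lambda)$ or $\lambda_{i}\ge\lambda_{i+1}+2$; a box is an inner corner iff either it equals $(i,\lambda_{i}+i)$ for some $i$ with $i=1$ or $\lambda_{i-1}\ge\lambda_{i}+2$, or it equals $(\ell(\lambda)+1,\ell(\lambda)+1)$ and $\lambda_{\ell(\lambda)}\ge 2$) and then read off the contents via $c_{\square}=j-i$. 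As a sanity check one can take $\lambda=(5,4,2)$: here $E=\{5,4,2\}$, $(E-1)\setminus E=\{3,1\}$ and $E\setminus(E-1)=\{5,2\}$, which match the contents of $\mathbb{O}_{\lambda}=\{(2,5),(3,4)\}$ and — up to the content-$0$ box $(4,4)$ — of $\mathbb{I}_{\lambda}=\{(1,6),(3,5),(4,4)\}$.
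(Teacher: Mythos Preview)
Your argument is correct and takes a genuinely different route from the paper. The paper proves \eqref{eq:ge_q} by induction on $|\lambda|$: it checks the base case $\lambda=\emptyset$, then for a box $\square=(r,\lambda_r+r)$ added to $\lambda$ computes the ratio of both sides at $\lambda^+$ and $\lambda$ separately, showing each equals $(1-\lambda_r(\lambda_r+1)u)^2\big/\big((1-(\lambda_r+1)(\lambda_r+2)u)(1-(\lambda_r-1)\lambda_r u)\big)$; the change in $\psi_k$ is read off from the local modification of the corner sets near the added box. Your approach instead unpacks the logarithm once and for all and reduces the identity to a bijection between the multiset of corner contents and the symmetric-difference sets $(E-1)\setminus E$ and $E\setminus(E-1)$, with the observation $f(0)=1$ disposing of the content-$0$ inner corner. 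The inductive proof localizes the combinatorics to a single box and so avoids the global corner classification, at the cost of an extra induction; your direct proof is shorter and makes the structure transparent, but requires the explicit corner criteria you spell out (including the boundary cases for the first and last rows), which is exactly the ``routine but delicate'' part you flag. Both arguments ultimately hinge on the same fact, namely that the corner contents are governed by which parts $\lambda_i$ have $\lambda_i\pm 1$ also present in $E$.
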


\begin{proof}
We prove the formula by induction on $|\lambda|$.
If $\lambda=\emptyset$, then \eqref{eq:ge_q} holds true because
%$\widetilde{\Phi}(u;\emptyset)=1$ and 
$\psi_k(\emptyset)=0$ for all $k \ge 1$.
Let $\lambda$ be a strict partition and put 
$$
\widetilde{\Phi}(u;\lambda)=
\prod_{i =1}^{\ell(\lambda)} 
\frac{1-\lambda_i (\lambda_i-1) u}
{1- \lambda_i(\lambda_i+1) u}
\qquad \text{and} \qquad
\widetilde{\mathbf{\Phi}} (u;\lambda)= 
\exp \( \sum_{k=1}^\infty \frac{u^k}{k} \psi_k(\lambda) \).
$$
Consider a strict partition $\lambda^+$
obtained by adding a box $\square$
to $\lambda$. 
The added box $\square$ is an inner corner of $\lambda$ and 
of the form $\square=(r, \lambda_r+r)$, where
$r$ is an integer in $\{1,2,\dots,\ell(\lambda)+1\}$ and we set
$\lambda_{\ell(\lambda)+1}=0$.
Notice that $c_{\square}= \lambda_r$.
It is easy to see that
$$
\frac{\widetilde{\Phi}(u;\lambda^+)}{\widetilde{\Phi}(u;\lambda)}=
\frac{(1-\lambda_r(\lambda_r+1) u)^2}{(1-(\lambda_r+1)(\lambda_r+2)u)
(1- (\lambda_r-1)\lambda_r u)}.
$$
If we show the same identity
for $\widetilde{\mathbf{\Phi}}$
then the induction step is completed.

A careful observation of the difference 
between $\mathbb{I}_\lambda \sqcup \mathbb{O}_\lambda$
and $\mathbb{I}_{\lambda^+} \sqcup
\mathbb{O}_{\lambda^+}$ implies that
\begin{align*}
\psi_k(\lambda^+)- \psi_k(\lambda)=& 
\{(c_{\square}+1)(c_{\square}+2)\}^k
+ \{(c_{\square}-1)c_{\square}\}^k -
2 \{c_{\square}(c_{\square}+1)\}^k \\
=&\{(\lambda_r+1)(\lambda_r+2)\}^k
+ \{(\lambda_r-1)\lambda_r\}^k -
2 \{\lambda_r(\lambda_r+1)\}^k 
\end{align*}
see also \cite[(3.10)]{HanXiong2015}.
Therefore we have
\begin{align*}
\frac{\widetilde{\mathbf{\Phi}} (u;\lambda^+)}
{\widetilde{\mathbf{\Phi}} (u;\lambda)} =&
\exp \( \sum_{k=1}^\infty \frac{u^k}{k} (\psi_k(\lambda^+)-\psi_k(\lambda))\)  \\
 =&  \frac{(1-\lambda_r(\lambda_r+1)
 u)^2}
 {(1-(\lambda_r+1)(\lambda_r+2)u)
(1- (\lambda_r-1) \lambda_r u)},
\end{align*}
as desired.
\end{proof}

The function $\psi_k$ is simply given as 
a polynomial in variables $\lambda_1,\lambda_2,\dots$.

\begin{prop} 
For each $k \ge 1$ and strict partition $\lambda$, we have
$$
\psi_k(\lambda)
=\sum_{i=1}^{\ell(\lambda)} \lambda_i^k\{(\lambda_i+1)^k-(\lambda_i-1)^k\}
=2 \sum_{\begin{subarray}{c}1 \le s \le k \\ \mathrm{odd}  
\end{subarray}} \binom{k}{s} p_{2k-s}(\lambda).
$$
In particular, $\psi_k$ is a supersymmetric function.
\end{prop}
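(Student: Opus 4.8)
The plan is to read off $\psi_k(\lambda)$ directly from the generating function identity \eqref{eq:ge_q} established in the preceding proposition, and then to finish with a one-line binomial expansion. Since \eqref{eq:ge_q} writes $\exp\bigl(\sum_{k\ge1}\frac{u^k}{k}\psi_k(\lambda)\bigr)$ as the product $\prod_{i}\frac{1-\lambda_i(\lambda_i-1)u}{1-\lambda_i(\lambda_i+1)u}$, the first step is to take logarithms. Using $-\log(1-x)=\sum_{k\ge1}x^{k}/k$ one obtains
$$
\sum_{k=1}^{\infty}\frac{u^{k}}{k}\psi_{k}(\lambda)
=\sum_{i=1}^{\ell(\lambda)}\Bigl(\log\bigl(1-\lambda_{i}(\lambda_{i}-1)u\bigr)-\log\bigl(1-\lambda_{i}(\lambda_{i}+1)u\bigr)\Bigr)
=\sum_{k=1}^{\infty}\frac{u^{k}}{k}\sum_{i=1}^{\ell(\lambda)}\bigl((\lambda_{i}(\lambda_{i}+1))^{k}-(\lambda_{i}(\lambda_{i}-1))^{k}\bigr).
$$
Comparing coefficients of $u^{k}$ (legitimate by uniqueness of the coefficients of a formal power series in $u$ over $\bQ[\lambda_1,\dots,\lambda_{\ell(\lambda)}]$) then gives $\psi_{k}(\lambda)=\sum_{i}\bigl((\lambda_{i}(\lambda_{i}+1))^{k}-(\lambda_{i}(\lambda_{i}-1))^{k}\bigr)=\sum_{i}\lambda_{i}^{k}\bigl((\lambda_{i}+1)^{k}-(\lambda_{i}-1)^{k}\bigr)$, which is the first asserted equality, and which manifestly exhibits $\psi_k$ as a polynomial in the $\lambda_i$.

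For the second equality I would apply the binomial theorem to $(\lambda_{i}+1)^{k}-(\lambda_{i}-1)^{k}=\sum_{s=0}^{k}\binom{k}{s}\lambda_{i}^{k-s}\bigl(1-(-1)^{s}\bigr)=2\sum_{\substack{1\le s\le k\\ s\ \mathrm{odd}}}\binom{k}{s}\lambda_{i}^{k-s}$; multiplying by $\lambda_{i}^{k}$, summing over $i$, and using $\sum_{i}\lambda_{i}^{2k-s}=p_{2k-s}(\lambda)$ yields
$$
\psi_{k}(\lambda)=2\sum_{\substack{1\le s\le k\\ s\ \mathrm{odd}}}\binom{k}{s}p_{2k-s}(\lambda).
$$
Since $s$ is odd, $2k-s$ is odd, so every $p_{2k-s}$ lies in $\Gamma$; hence $\psi_{k}$ agrees on all of $\mcal{SP}$ with the element $2\sum_{s\ \mathrm{odd}}\binom{k}{s}p_{2k-s}\in\Gamma$, which is precisely the assertion that $\psi_{k}$ is supersymmetric.

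There is essentially no obstacle here: once \eqref{eq:ge_q} is available, the argument is pure formal power-series manipulation plus a binomial identity, and the only point worth a word of care is the coefficient comparison in the first step. If one wished to avoid invoking \eqref{eq:ge_q}, one could instead reprove the first equality from scratch by the same logarithmic computation applied to $\prod_i\frac{1-\lambda_i(\lambda_i-1)u}{1-\lambda_i(\lambda_i+1)u}$, but citing the preceding proposition is the most economical route.
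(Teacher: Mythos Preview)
Your proof is correct and follows essentially the same route as the paper: take the logarithm of \eqref{eq:ge_q}, expand the logarithms and compare coefficients of $u^k$ to get the first equality, then apply the binomial theorem for the second. You simply spell out the steps in more detail than the paper does.
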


\begin{proof}
Taking the logarithm of \eqref{eq:ge_q}, we find
$$
\sum_{k=1}^\infty \frac{u^k}{k} \psi_k(\lambda) = 
\sum_{i =1}^{\ell(\lambda)} \log  
\frac{1-\lambda_i (\lambda_i-1) u}{1- \lambda_i (\lambda_i+1)u}.
$$
Expanding the logarithm functions and 
comparing the coefficient of $u^k$ on both sides, we obtain the first equality in the theorem.
The remaining equality is obtained by
applying the binomial theorem for the first equality.
\end{proof}

For example:
$$
\psi_1= 2p_1, \qquad \psi_2= 4p_3, \qquad \psi_3=6 p_5+2  p_3, \qquad 
\psi_4= 8 p_7+ 8 p_5.
$$

\section{Open problems} \label{sec:OpenProblems}

\subsection{Degree functions on $\Gamma$}

In Subsection \ref{Bound_degree}, we introduced 
the filtration $\deg_1$ on the vector space $\Gamma$.
We remain a conjecture:

\begin{conj} 
The filtration $\deg_1$ is 
compatible with the multiplication of $\Gamma$ in the following sense.
Define the structure constants $f_{\sigma \tau}^\rho$ by 
$$
\mf{p}_{\sigma} \mf{p}_\tau = \sum_{\rho} f^\rho_{\sigma \tau} \mf{p}_\rho.
$$
For $f^\rho_{\sigma \tau} \not=0$ then,
$$
|\rho|+m_1(\rho) \le (|\sigma|+m_1(\sigma)) + (|\tau|+m_1(\tau)).
$$
Hence $\deg_1$ defines an algebra filtration of $\Gamma$.
\end{conj}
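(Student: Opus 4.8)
The plan is to deduce the conjecture from its special case in which neither $\sigma$ nor $\tau$ has a part equal to $1$, and to prove that case by a projective analogue of the Ivanov--Kerov algebra of partial permutations. For the reduction, note first that Proposition~\ref{prop:mp}(iv), applied to $\rho\cup(1)$ in place of $\rho$ and combined with $(x)^{\downarrow(m+1)}=(x)^{\downarrow m}(x-m)$, gives $\mf{p}_{\rho\cup(1)}(\lambda)=(|\lambda|-|\rho|)\,\mf{p}_\rho(\lambda)$ for every $\rho\in\mcal{OP}$; since $\mf{p}_{(1)}=p_1$ takes the value $|\lambda|$ at $\lambda$ and elements of $\Gamma$ are determined by their values on $\mcal{SP}$, this is the exact product rule
$$
\mf{p}_{(1)}\,\mf{p}_\rho=\mf{p}_{\rho\cup(1)}+|\rho|\,\mf{p}_\rho\qquad(\rho\in\mcal{OP})
$$
in $\Gamma$. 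Writing $\Gamma_{\le d}$ for the span of those $\mf{p}_\nu$ with $|\nu|+m_1(\nu)\le d$ and using $\deg_1\mf{p}_{\rho\cup(1)}=\deg_1\mf{p}_\rho+2$, we get $\mf{p}_{(1)}\cdot\Gamma_{\le d}\subseteq\Gamma_{\le d+2}$, hence $g(\mf{p}_{(1)})\cdot\Gamma_{\le d}\subseteq\Gamma_{\le d+2\deg g}$ for any polynomial $g$. A second application of Proposition~\ref{prop:mp}(iv) writes $\mf{p}_\sigma$ as a polynomial of degree $m_1(\sigma)$ in $\mf{p}_{(1)}$ times $\mf{p}_{\tilde\sigma}$, and likewise for $\tau$; so $\mf{p}_\sigma\mf{p}_\tau=g(\mf{p}_{(1)})\,\mf{p}_{\tilde\sigma}\,\mf{p}_{\tilde\tau}$ with $\deg g=m_1(\sigma)+m_1(\tau)$, and since $|\sigma|+m_1(\sigma)=|\tilde\sigma|+2m_1(\sigma)$ the conjecture follows once one shows
$$
(\star)\qquad \mf{p}_\sigma\mf{p}_\tau\in\Gamma_{\le|\sigma|+|\tau|}\quad\text{whenever }m_1(\sigma)=m_1(\tau)=0.
$$

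For $(\star)$ I would realise the $\mf{p}_\rho$ inside a projective limit of centres of twisted symmetric group algebras $\bC[\widetilde S_n]$ (equivalently of Sergeev algebras). By Propositions~\ref{prop:scalar_product}(iv) and~\ref{prop:mp}(iii) the value $\mf{p}_\rho(\lambda)=|\lambda|^{\downarrow|\rho|}\,X^\lambda_{\rho\cup(1^{|\lambda|-|\rho|})}/g^\lambda$ is, up to an explicit $n$-independent factor, the eigenvalue on the spin block $\lambda$ of a normalised sum of the spin conjugacy class of reduced odd type $\rho$ in $\bC[\widetilde S_{|\lambda|}]$. Following Ivanov--Kerov, these sums should be packaged into one algebra $\mcal{B}_\infty^{\mathrm{spin}}$ of \emph{partial spin permutations}---pairs $(A,w)$ with $A\subset\bZ_{>0}$ finite and $w$ a spin permutation of $A$, multiplied by $(A_1,w_1)(A_2,w_2)=(A_1\cup A_2,\bar w_1\bar w_2)$ with the $\bar w_i$ extended by the identity---filtered by $\deg(A,w)=|A|$, and realised on $\mcal{SP}$ so that $\mf{p}_\rho$ (for $m_1(\rho)=0$) is the image of the class sum of type $\rho$, which lies in filtration degree $|\rho|$. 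The degree bound then rests on a short lemma about supports, valid already for ordinary permutations: if $x\in A_1\cup A_2$ is fixed by $\bar w_1\bar w_2$ then $x\in A_1\cap A_2$ (using that $w_i$ moves every point of $A_i$). Hence, if $\bar w_1\bar w_2$ has reduced type $\rho'$, the number $k=|A_1\cup A_2|-|\rho'|$ of its marked fixed points satisfies $k\le|A_1\cap A_2|$, so
$$
|\rho'|+2k=|A_1\cup A_2|+k\le|A_1|+|A_2| .
$$
As the class sum of type $\rho'\cup(1^k)$ is realised as $\mf{p}_{\rho'\cup(1^k)}$, whose $\deg_1$ equals $|\rho'|+2k$, taking $|A_1|=|\sigma|$, $|A_2|=|\tau|$ yields $(\star)$, and with it the conjecture; the same bound shows that $\deg_1$ is indeed an algebra filtration.

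Step~1 is elementary and complete as written. The substance of the argument is the realisation in Step~2: passing to the covers $\widetilde S_n$ brings in the $\bZ/2$-grading of the spin world, so one must check that the sums of odd-type spin classes (i) are central in the relevant sense, (ii) assemble into a projective system compatibly with the filtrations, and (iii) map under the realisation exactly to the $\mf{p}_\rho$ with the correct powers of $2$ and descending-factorial normalisations $|\lambda|^{\downarrow|\rho|}$. A helpful simplification is that, on the spin representations, sums of even-type classes act by $0$ and the split classes do not occur in a product $\mf{p}_\sigma\mf{p}_\tau$ once $n$ is large, so the product stays within the span of odd-type class sums; nonetheless, transcribing the Ivanov--Kerov partial-permutation theorem to this projective setting is the crux, and I expect it to be the main obstacle. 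An alternative that avoids $\mcal{B}_\infty^{\mathrm{spin}}$ is to prove directly a spin Murnaghan--Nakayama/Pieri rule for $\mf{p}_{(2k+1)}\mf{p}_\rho$ with the bound $\deg_1\bigl(\mf{p}_{(2k+1)}\mf{p}_\rho\bigr)\le 2k+1+|\rho|$ built in (the case $k=0$ being the identity above), and then bootstrap to all of $(\star)$ by induction on $\ell(\sigma)$ via the reduction of Step~1.
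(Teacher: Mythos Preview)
The statement you are addressing is presented in the paper as an open \emph{conjecture}, not a theorem; the paper gives no proof, and in fact explicitly raises as a question whether a spin analogue of the Ivanov--Kerov partial permutation algebra exists. So there is no proof in the paper to compare your proposal against.

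Your Step~1 is correct and complete. The identity $\mf{p}_{(1)}\mf{p}_\rho=\mf{p}_{\rho\cup(1)}+|\rho|\,\mf{p}_\rho$ follows exactly as you say from Proposition~\ref{prop:mp}(iv), and the reduction of the conjecture to the case $m_1(\sigma)=m_1(\tau)=0$ via $\mf{p}_\sigma=(\mf{p}_{(1)}-|\tilde\sigma|)^{\downarrow m_1(\sigma)}\mf{p}_{\tilde\sigma}$ is valid. This reduction is a genuine simplification that the paper does not record.

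The gap is Step~2, and you identify it honestly yourself: the construction of a projective partial-permutation algebra $\mcal{B}_\infty^{\mathrm{spin}}$ together with a realisation map taking odd-type class sums to the $\mf{p}_\rho$ is precisely the open problem the paper poses after stating the conjecture. Your support-counting lemma for ordinary permutations is fine, but everything hinges on (i) defining the algebra in the $\bZ/2$-graded setting so that products of odd-type central elements stay in the span of odd-type class sums (the even classes and the split classes must be shown to drop out or be handled), and (ii) verifying that under the realisation the normalisations match $\mf{p}_\rho$ exactly, including the $2$-powers coming from $Q_\lambda=2^{\ell(\lambda)}P_\lambda$. None of this is carried out, so what you have is a plausible programme rather than a proof; your alternative route via a spin Pieri rule for $\mf{p}_{(2k+1)}\mf{p}_\rho$ is in the same state. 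In short: the reduction is solid, but the heart of the argument remains the open question.
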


Remark that
the corresponding result in the algebra of shifted-symmetric functions
is obtained in \cite{IvanovKerov},
based on the theory of the partial permutation algebra.
Can we find a spin-analog of the partial permutation algebra?

We showed that, if $f$ is a supersymmetric function,
 $\mathbb{E}_n[f]$ is a polynomial in $n$ of degree 
at most $\frac{1}{2} \deg_1(f)$. 
To consider various degree filtrations is of help for 
estimations of the degree of $\bE_n[f]$, see \cite{DolegaFeray, IO2002}.

\subsection{Polynomiality for non-supersymmetric functions}

Assume that $F$ is a symmetric function but not supersymmetric.
It is natural to ask whether $\bE_n[F]$ is a polynomial in $n$.
As a trial, let us consider 
the second power-sum symmetric function $p_2(x_1,x_2,\dots)=\sum_{i \ge 1} x_i^2$.
Some values are directly computed as follows:
\begin{align*}
\bE_{1}[p_2]=&1, & \bE_2[p_2]=&4, & \bE_3[p_2]=& \frac{23}{3}, \\
\bE_4[p_2]=& 12, & \bE_5[p_2]=&17, & \bE_6[p_2]=&\frac{1016}{45}.
\end{align*}
Recall  $\bE_n[p_3]= 3n^2 -2n$
and a trivial identity $\bE_n[p_1]=n$.
If $\bE_n[p_2]$ is a polynomial in $n$, one may expect 
that it is of degree at most $2$.
However, there is no polynomial $\Phi_{p_2}(x)$ of degree 2 such that 
$\bE_{n}[p_2]=\Phi_{p_2}(n)$ for all $1 \le n \le 6$.

\subsection{Hook evaluations}

Recall the ordinary Plancherel measure $\mathbb{P}_n^{
\mathrm{Plan}}$ on partitions. 
As we mention in Subsection~\ref{subsec:related}, 
Stanley \cite{Stanley2010} (see also \cite{Han2009})  proves that
the summation 
$$
\sum_{\lambda \in \mcal{P}_n}\mathbb{P}_n^{\mathrm{Plan}} (\lambda) F(h_\square^2 : \square \in Y(\lambda))
$$
is a polynomial in $n$ for any symmetric function $F$, 
where $h_\square$ denotes the hook length of the square $\square$
in the Young diagram $Y(\lambda)$.
What is the analog of this result for the shifted Plancherel measure on strict partitions? 
%:Reference

%<<<<<<<<<<<<<<<<<<<<<<<<<<<<<<<<<<<<<<<<<

\bigskip

\noindent
\textsc{Sho Matsumoto} \\
Graduate School of Science and Engineering, Kagoshima University, \\ 
1-21-35, Korimoto, Kagoshima-city, Kagoshima, JAPAN. \\
\verb|shom@sci.kagoshima-u.ac.jp|


\begin{thebibliography}{99}
%>>>>>>>>>>>>>>>>>>>>>>>>>>>>>>>>>>>>>>>>>
%
%

\bibitem{Borodin1999}
A.M. Borodin, Multiplicative central measures on the Schur graph,
{\it J. Math. Sci.} 96 (1999), 3472--3477.

\bibitem{CGS2004}
S. Corteel, A. Goupil, and G. Schaeffer,
Content evaluation and class symmetric functions,
{\it Adv. Math.} 188 (2004), 315--336.

\bibitem{DolegaFeray}
M. Do{\l}\c{e}ga and V. F\'{e}ray,
Gaussian fluctuations of Young diagrams and structure constants of
Jack characters,
{\it Duke Math. J.} 165 (2016), 1193--1282.

\bibitem{Feray2010}
V. F\'{e}ray,
Stanley's formula for characters of the symmetric group,
{\it Ann. Comb.} 13 (2010), 453--461.

\bibitem{Feray2012}
V. F\'{e}ray,
On complete functions in Jucys-Murphy elements,
{\it Ann. Comb.} 16 (2012), 677--707.

\bibitem{FKMO}
S. Fujii, H. Kanno, S. Moriyama, and S. Okada,
Instanton calculus and chiral one-point functions in
supersymmetric gauge theories,
{\it Adv. Theor. Math. Phys.} 12 (2008), 1401--1428.

\bibitem{Han2009}
G.-N. Han,
Some conjectures and open problems on partition hook lengths, 
{\it Experimental Mathematics}, 18 (2009),97--106. 

\bibitem{HanXiong2015}
G.-N. Han and H. Xiong,
New hook-content formulas for strict partitions,
ArXiv:1511.02829v1;
an extended abstract,
{\it DMTCS proc. BC}, 2016, 635--646 
(FPSAC 2016).


\bibitem{HH}
P.N. Hoffman and J.F. Humphreys, 
{\it Projective representations of the symmetric groups. Q-functions and shifted tableaux},
OXford Mathematical monographs,
Oxford Science Publications (1992). 

\bibitem{Ivanov1999}
V.N. Ivanov,
Dimenisions of skew-shifted Young diagrams and projective characters
of the infinite symmetric group,
{\it J. Maht. Sci.} 96 (1999), 3517--3530.

\bibitem{Ivanov2005}
V.N. Ivanov,
Interpolation analogs of Schur $Q$-functions,
{\it J. Maht. Sci.} 131 (2005), 5495--5507.

\bibitem{IvanovKerov}
V. Ivanov and S. Kerov,
The algebra of conjugacy classes in symmetric groups and 
partial permutations,
{\it J. Math. Sci.} 107 (2001), 4212--4230.

\bibitem{IO2002}
V. Ivanov and G. Olshanski,
Kerov's central limit theorem for the Plancherel measure on Young diagrams,
Symmetric functions 2001: surveys of developments and perspectives, 93--151, 
{\it NATO Sci. Ser. II Math. Phys. Chem.} 74, Kluwer Acad. Publ., Dordrecht, 2002. 

\bibitem{LT}
A. Lascoux and J. Thibon,
Vertex operators and the class algebras of symmetric groups,
{\it J. Math. Sci.} 121 (2004), 2380--2392.


\bibitem{Lassalle2013}
M. Lassalle,
Class expansion of some symmetric functions in Jucys--Murphy elements,
{\it J. Algebra} 394 (2013), 397--443.

\bibitem{Macdonald}
I.G. Macdonald, {\it Symmetric Functions and Hall Polynomials},
2nd edn. Oxford University Press, Oxford (1995).

\bibitem{M2005a}
S. Matsumoto,
Correlation functions of the shifted Schur measure,
{\it J. Math. Soc. Japan}, 57 (2005), 619--637.

\bibitem{M2011}
S. Matsumoto,
Jucys--Murphy elements, orthogonal matrix integrals,
and Jack measures,
{\it Ramanujan J.} 26 (2011), 69--107.

\bibitem{MN}
S. Matsumoto and J. Novak,
Jucys--Murphy elements and unitary matrix integrals,
{\it Int. Math. Res. Not.} (2013), 362--397.

\bibitem{Nazarov}
M. Nazarov,
Young's symmetrizers for projective representations of the symmetric group,
{\it Adv. Math.} 127 (1997), 190--257.

\bibitem{Olshanski2010}
G. Olshanski,
Plancherel averages: Remarks on a paper by Stanley,
{\it Elec. J. Comb.} 17 (2010), \#R43, 16 pages.

\bibitem{Panova2012}
G. Panova,
Polynomiality of some hook-length statistics,
{\it Ramanujan J.} 27 (2012), 349--356. 

\bibitem{Sergeev}
A. Sergeev,
The Howe duality and the projective representations of symmetric groups,
{\it Representation Theory} 3 (1999), 416--434.

\bibitem{Stanley2006}
R.P. Stanley,
A conjectured combinatorial interpretation of the normalized irreducible character values of the symmetric group, ArXiv:math.CO/0606467.


\bibitem{Stanley2010}
R.P. Stanley,
Some combinatorial properties of hook lengths, contents,
and parts of partitions,
{\it Ramanujan J.} 23 (2010), 91--105.

\bibitem{TW}
J. Tysse and W. Wang,
The centers of spin symmetric group algebras and Catalan numbers,
{\it J. Algebr. Comb.} 29 (2009), 175--193.

\bibitem{VS}
A.M. Vershik and A.N. Sergeev,
A new approach to the representation theory 
of the symmetric groups IV.
$\mathbb{Z}_2$-graded groups and algebras;
projective representations of the group $S_n$,
{\it Mosc. Math. J.} 8 (2008), 813--842.

%
\end{thebibliography}
\end{document}